\newtheorem{theorem}{Theorem}[section]
\newtheorem{conjecture}[theorem]{Conjecture}
\newtheorem{remark}[theorem]{Remark}
\newtheorem{lemma}[theorem]{Lemma}
\newtheorem{proposition}[theorem]{Proposition}
\newtheorem{corollary}[theorem]{Corollary}
\newcommand{\nB}[2]{\mathcal{N}^{#1}(#2)}
\newcommand{\nOB}[2]{\overline{\mathcal{N}}^{#1}(#2)}
\newcommand{\NB}[2]{N^{#1}(#2)}
\newcommand{\NOB}[2]{\overline{N}^{#1}(#2)}
\newcommand{\E}[0]{\mathbb{E}}
\newcommand{\Prob}{\mathbb{P}}
\newcommand{\bbone}{\text{\usefont{U}{bbold}{m}{n}1}}
\DeclarePairedDelimiterX{\expectarg}[1]{[}{]}{%
  \ifnum\currentgrouptype=16 \else\begingroup\fi
  \activatebar#1
  \ifnum\currentgrouptype=16 \else\endgroup\fi
}
 \title{The number of particles at sublinear distances from the tip in branching Brownian motion}
\author{Gabriel Flath\thanks{Department of Statistics, University of Oxford, gabriel.flath@stats.ox.ac.uk}}
\begin{document}
\maketitle

\begin{abstract}
Consider a branching Brownian motion (BBM). It is well known \cite{Bramson1983ConvergenceOS, Lalley1987ACL} that the rightmost particle is located near \( m_t = \sqrt{2} t - \frac{3}{2\sqrt{2}} \log t \). Let $\mathcal{N}(t,x)$ be the set of particles within distance $x$ from $m_t$, where $x = o(t)$ grows with $t$. We prove that \(\#\mathcal{N}(t,x)/\pi^{-1/2}xe^{xm_t/t} e^{-x^2/(2t)} \) converges in probability to $Z_\infty$, the limit of the so-called derivative martingale, and that, for \( x = O( t^{1/3}) \), the convergence cannot be strengthened to an almost sure result. Moreover, we prove that the asymptotic overlap distribution of two particles sampled uniformly from $\mathcal{N}(t,x)$ converges to that of the critical derivative martingale measure. This establishes a universal genealogical picture of the BBM front at sublinear distances from the tip.
\end{abstract}

\section{Introduction and results}
\subsection{Introduction}

The (standard dyadic) Branching Brownian motion (BBM) is a classical stochastic process that describes the evolution of a population of particles undergoing both independent diffusion and branching. It is a staple of probability theory and statistical physics, with applications in e.g. population dynamics, reaction-diffusion equations, and random spin-glass models. Formally, BBM starts with a single particle at the origin, which diffuses as a standard Brownian motion. This particle lives for an exponentially distributed lifetime with mean one, and upon its death this particle splits into two offspring at the same position. Each daughter particle then starts an independent copy of the same process.
\\

The behaviour of the extremal particles of the BBM is by now a classical and well developed question which has seen some striking recent progress. Because the BBM is a toy model for log-correlated random fields, those developments have been instrumental in the study of the extremes of several models such as the Gaussian free field and the Random Energy Model universality class.\\

Let $\mathcal{N}_t$ be the collection of particles at time $t$. For $v \in \mathcal{N}_t$ and $s\in \left[0,t\right]$, we say that $u\in \mathcal{N}_s$ is its ancestor at time $s$ if $v$ is a descendant of $u$ and denote it $u\preceq v$. For $u\in \mathcal{N}_t$ and $s\in \left[0,t\right]$, we write $X_u(s) \in \mathbb{R}$  for the position of particle $u$ (or its ancestor) at time $s$. We define the branching Brownian motion process to be $X(t)=(X_u(t), u \in \mathcal{N}_t)$ and its natural filtration $\mathcal{F}_t=\sigma\{X(s), s\leq t \}$.\\

In a series of seminal works,  Bramson \cite{Bramson1983ConvergenceOS}, showed that the position of the rightmost particle in a BBM is centred around $m_t \coloneqq \sqrt 2 t -\frac3{2\sqrt 2 }\log t$, and more precisely that 
\[
\max_{u\in \mathcal{N}_t}X_u(t) - m_t \xrightarrow[t\rightarrow \infty]{d} W,
\]
where $W$ is a random variable whose distribution function is given by the so-called critical traveling wave solution to the Fisher-KPP equation. Lalley and Sellke \cite{Lalley1987ACL} then proved that  there exists a constant $C>0$ such that
\begin{equation}
     \Prob( \max_{u\in \mathcal{N}_t}X_u(t) - m_t\le x) \to \E\left[ e^{-C Z_\infty e^{-\sqrt 2 x}} \right],  
\end{equation} 
where  $Z_\infty>0$ is the almost sure limit of the derivative martingale:
\begin{equation*}
    Z_t=\sum_{u \in \mathcal{N}_t}(\sqrt{2}t - X_u(t))e^{\sqrt{2}(X_u(t)-\sqrt{2}t)}.
\end{equation*}
Observe that this suggests that
\begin{equation}
     \max_{u\in \mathcal{N}_t}X_u(t) - m_t -\log(CZ_\infty )/\sqrt{2}\xrightarrow[t\rightarrow \infty]{d} G,
\end{equation} 
where $G$ follows a Gumbel distribution, as later confirmed; see, e.g., \cite{adékon2012branching}.

To study the genealogy of the process, we will frequently appeal to the branching property. For any time $r \ge 0$ and any particle $u \in \mathcal{N}_r$, the descendants of $u$ form a branching Brownian motion shifted in time and space. We denote by $Z_\infty^{(u)}$ the almost sure limit of the derivative martingale associated with this shifted subtree. Conditional on $\mathcal{F}_r$, the random variables $(Z_\infty^{(u)})_{u \in \mathcal{N}_r}$ are independent and have the same distribution as the global limit $Z_\infty$.

The aim of our work is to extend the understanding of the front up to sublinear distance of $m_t$. In particular, we investigate the number of particles within a sublinear distance of $m_t$. Understanding this quantity provides deeper insights into the fine structure of the front and other existing results.\\

\subsection{Main results}\label{sub:mainresult}
For $t\geq 0$, define the set of particles to the right of $m_t-x$,
\begin{equation}
    \mathcal{N}(t,x) \coloneqq  \{u\in \mathcal{N}_t : X_u(t) \geq m_t -x\},
\end{equation}
and $N(t,x)\coloneqq \#\mathcal{N}(t,x)$ its cardinality.

\begin{theorem}\label{th:1}
    Let $x_t$ be such that, as $t\to \infty$, $x_t=o(t)$ and $x_t\rightarrow \infty$, then,
\begin{equation}
  \frac{ N(t,x_t)}{\pi^{-\frac{1}{2}}  x_t e^{\frac{m_t}{t}x_t - \frac{x^2}{2t}} } \xrightarrow[t\rightarrow \infty]{\mathbb{P}} Z_\infty.
\end{equation}

\end{theorem}
For convenience, we define  
\[
    f(t, x) \coloneqq \pi^{-1/2} x e^{ \frac{m_t}{t}x - \frac{x^2}{2t} }.
\]

\begin{remark}\label{re:rkconj}
The statement of Theorem \ref{th:1} is equivalent to, for any $\delta>0$,
    \begin{equation}
        \lim_{x\to \infty}\lim_{c\to 0}\sup_{t : x\leq ct}\Prob\left(\left|\frac{N(t,x)}{f(t,x)} -Z_\infty \right|>\delta \right) =0.
    \end{equation}    
\end{remark}

The following theorem establishes that almost sure convergence fails when \( x_t \) grows slowly with \( t \).
\begin{theorem}\label{thm1:2}
    Let \( x_t \) be such that \( 0\leq x_t \leq t^{1/3} \). Then,
    \begin{equation}
        \limsup_{t \to \infty} \frac{N(t, x_t)}{x_t e^{\sqrt{2}x_t}} = \infty \qquad \text{a.s.}
    \end{equation}    
\end{theorem}

The proof of Theorem~\ref{th:1} relies on bounding the number of pairs of particles that do not split early. Together with Theorem~\ref{th:1}, this bound yields an explicit limit overlap distribution for two particles sampled uniformly from \( \mathcal{N}(t, x) \).

Define the splitting time between two particles \( (u,v)\in \mathcal{N}_t^2 \) as
\[
    Q_t(u, v) \coloneqq \sup \left\{ s \leq t : \forall \gamma \leq s,\ X_u(\gamma) = X_v(\gamma) \right\}.
\]

\begin{theorem}\label{thm:ut}
     Let \( x_t \) be such that \( x_t \to \infty \), and \( x_t = o(t) \) as \( t \to \infty \). For $t\geq 0$, let
     \begin{equation}
         \mu_t \coloneqq \frac{1}{N(t,x_t)^2}\sum_{(u,v)\in \mathcal{N}(t,x_t)^2}\delta_{Q_t(u,v)} \quad (\text{setting } \mu_t \coloneqq \delta_0 \text{ if } N(t,x_t) = 0)
     \end{equation}
     the empirical overlap distribution for two particles sampled uniformly from $\mathcal{N}(t,x_t)$. Then $(\mu_t)_{t\ge0}$ converges weakly in probability to a limit $\mu_\infty$, that is,
    \begin{equation}
    \mu_t \xrightarrow[t\to\infty]{\mathbb{P}} \mu_\infty
    \quad \text{in the weak topology}.
    \end{equation}
    Where $\mu_\infty$ is characterized by 
    \begin{equation}
    \mu_\infty([r,\infty))
    =\frac{1}{Z_\infty^2}
    \sum_{u\in\mathcal{N}_r}
    \left(e^{\sqrt{2}X_u(r)-2r}\,Z_\infty^{(u)}\right)^2,
    \qquad \forall r\ge0,
    \quad \text{almost surely.}
    \end{equation}
\end{theorem}
Theorem \ref{thm:ut} directly implies the following.
\begin{corollary}
Let \( U \) and \( V \) be chosen uniformly at random from \( \mathcal{N}(t, x_t) \), and define \( Q_t = Q_t(U, V) \) if \( \mathcal{N}(t, x_t) \neq \emptyset \), and \( Q_t = 0 \) otherwise. Then \( (Q_t,Z_t)_{t \ge 0} \) converges in distribution to \( (Q_\infty,Z_\infty)\), where, conditional on $\mathcal{F}_\infty = \sigma\left(\cup_{t \ge 0} \mathcal{F}_t\right)$, $Q_\infty$ has distribution $\mu_\infty$.
\end{corollary}

\subsection{Discussion and related work}

First, observe that a first-moment calculation for $N(t,x)$ does not yield the correct order of magnitude. Indeed, a straightforward application of the many-to-One Lemma gives that
\begin{equation}
    \E[N(t,x)] \asymp t e^{\frac{m_t}{t}x - \frac{x^2}{2t}},
\end{equation}
whereas the convergence in Theorem \ref{th:1} suggests an order that is lower by a factor of $x/t$. This discrepancy arises from the correlation structure of branching Brownian motion. Specifically,  Arguin, Bovier, and Kistler \cite{Arguin2016} proved that extremal particles are unlikely to cross a certain barrier to the right. It is natural to expect that likewise, the particles contributing to $N(t,x)$ are similarly localised. The inflation of $\E[N(t,x)]$ can thus be attributed to the contribution of unlikely events where particles do manage to cross this barrier.\\

Theorem \ref{thm:ut}, Proposition \ref{pathlocup_proba} and Proposition \ref{prop:Kdist} provide the following description of the front of the branching Brownian motion. With high probability, for $x = o(t)$, most of the particles to the right of $m_t - x$ are in fact located within $m_t - x + O(1)$ at time $t$ and remain below the curve  $s\frac{m_t}{t}- (s\wedge(t-s))^\alpha$ for any $\alpha < 1/2$, throughout the time interval $[O(1), t - O(1)]$.  

More precisely, the trajectory of a particle in $\mathcal{N}(t,x)$, shifted by $m_ts/t$, resembles a Brownian bridge $B_s$ from distance $O(1)$ at time $r=O(1)$ and reaching $x$ at time $t$ conditioned to stay positive over the interval $[r, t]$.
\begin{itemize}
    \item For $x\ll \sqrt{t}$, $B_s$ behaves like a Brownian excursion (of length $t$) up to time $\gamma_\star =t-x^2$ and hence $B_{t-x^2}/x=O(1)$. After $\gamma_\star,$ the constraint to remain positive only contributes a constant factor, as this is a non-zero probability event. Furthermore, by Lemma \ref{bmbar}, the probability for a Brownian bridge starting at $O(1)$ at time $r$ and being at position $x$ at time $t-x^2$ without crossing 0 is of order $x/t$. 
    \item For $x\gg \sqrt{t}$, $B_s$ behaves like a Brownian excursion (of length $t$) up to time $\gamma_\star=(t/x)^2$ and hence $B_{(t/x)^2}x/t=O(1)$. After $\gamma_\star,$ the constraint to remain positive only contributes a constant factor, as this is a non-zero probability event. Furthermore, by Lemma \ref{bmbar}, the probability for a Brownian bridge starting at $O(1)$ at time $r$ and being at position $t/x$ at time $(t/x)^2$ without crossing 0 is of order $x/t$.
\end{itemize}

Thus, in both regimes, the probability of not hitting the barrier provides the correct multiplicative factor \( x/t \), as suggested at the beginning of this discussion.\\


Theorem \ref{thm:ut} implies that a pair of particles in the set $\mathcal{N}(t,x_t)$ typically splits early in the life of the process. It is interesting to compare this with the genealogy of the extremal particles, studied by Arguin, Bovier, and Kistler. In \cite[Theorem 2.1]{Arguin2016}, it is established that the branching time of a pair of extremal particles at time $t$ (i.e., within a compact distance of $m_t$) is typically either close to zero or close to $t$. When $x$ is fixed of order 1, there are only finitely many particles in $\mathcal{N}(t,x)$, and as they branch at a constant rate, there is a non-zero probability that two randomly sampled particles share a very recent common ancestor. In the language of the extremal point process, such closely related particles form a ``decoration''. If we condition two such extremal particles on having a small branching time (e.g., $\leq t/2$), they necessarily lie in different decorations. We expect that under this conditioning, their branching time also converges to our limit $Q_\infty$.

The key difference in our setting is as $x_t \to \infty$, the total number of decorations in $\mathcal{N}(t,x_t)$ tends to infinity, and no single recent branching event produces a positive proportion of the total population. Consequently, the probability of choosing two particles from the same decoration vanishes. Two randomly chosen particles will come from different decorations—and thus branched very far in the past— with probability tending to one, corresponding to the early-branching limit $Q_\infty$.\\

The limit measure $\mu_\infty$ in Theorem \ref{thm:ut} can be further characterized through its relationship with the \emph{critical measure} $\nu_\infty$ of the BBM. As established by Mallein \cite{Mallein2018}, the derivative martingale naturally induces a sequence of random probability measures $\nu_t$ on the particles at time $t$:
\begin{equation}
    \nu_t \coloneqq \frac{1}{Z_t} \sum_{u \in \mathcal{N}_t} (\sqrt{2}t - X_u(t)) e^{\sqrt{2}X_u(t) - 2t} \delta_u,
\end{equation}
which converges weakly almost surely to the critical measure $\nu_\infty$ on the boundary of the genealogical tree $\partial \mathbb{T}$. In particular if $V \sim \nu_\infty$, then the probability that $V$ is a descendant of some particle $u \in \mathcal{N}_r$ is given by the relative mass of the sub-tree rooted at $u$:
\begin{equation}
    \nu_\infty(u \preceq V) = \frac{e^{\sqrt{2}X_u(r) - 2r} Z_\infty^{(u)}}{Z_\infty}.
\end{equation}

Therefore, sampling $U,V$ independently from $\nu_\infty$,
the probability that their most recent common ancestor $Q(U, V)$ lived after time $r$ is given by the sum of the squared masses of the sub-trees at time $r$:
\begin{equation}
    \nu_\infty^{\otimes 2}(Q(U,V) \geq r) = \frac{1}{Z_\infty^2} \sum_{u \in \mathcal{N}_r} \left( e^{\sqrt{2}X_u(r) - 2r} Z_\infty^{(u)} \right)^2.
\end{equation}
This is exactly the formula that characterizes $\mu_\infty([r,+\infty))$ in Theorem \ref{thm:ut}. This identity reveals that the genealogical structure of the front is invariant across the sublinear regime, and that the ancestral splitting pattern governed by the derivative martingale is a universal feature for all particles at sublinear distance from the tip.\\

Note that, due to the phenomenon highlighted in the proof of Theorem \ref{thm1:2}, it is unlikely that the convergence in probability obtained in Theorem \ref{thm:ut} could be reinforced to an almost sure convergence for the regime $x_t \leq t^{1/3}$.\\
 
The extremal point process of the BBM has been studied extensively. In particular, Aïdékon, Berestycki, Brunet, and Shi~\cite{adékon2012branching} on the one hand, and Arguin, Bovier, and Kistler~\cite{arguin2011extremal} on the other, independently established that, viewed as a point process and shifted by $m_t$, the BBM converges in distribution to a limit point process called the \emph{extremal point process}. More precisely, let
\begin{equation}
    \mathcal{E}_t = \sum_{u \in \mathcal{N}_t} \delta_{X_u(t) - m_t}.
\end{equation}
As $t \to \infty$, they showed that the family of point processes $\mathcal{E}_t$ converges in distribution to a limit, sometimes referred to as a randomly shifted decorated Poisson point process (SDPPP).

Using the convergence of $\mathcal{E}_t$, Cortines, Hartung, and Louidor study the asymptotic number of particles in the extremal point process. They state, \cite[Theorem 1.1]{cort}, that there exists $C_\star>0$ such that
\begin{equation}\label{eq:erdf}
     \frac{\mathcal{E}\left([-x,\infty)\right)}{C_\star Z_\infty xe^{\sqrt{2}x}} \xrightarrow[x\to \infty]{\mathbb{P}}  1.
\end{equation}
 
Since $\mathcal{E}$ is the limit of $\mathcal{E}_t$ and $\mathcal{E}_t\left([-x,\infty)\right)=N(t,x)$, the convergence in \eqref{eq:erdf} is a direct consequence of our results. Specifically, letting $t\to \infty$ first in the Remark \ref{re:rkconj}, we obtain that for any $\epsilon>0$,
\begin{equation}
  \lim_{x\to \infty}\lim_{t\to \infty}P\left(\left| \frac{\mathcal{E}_t\left([-x,\infty)\right)}{C_\star Z_\infty xe^{\sqrt{2}x}} -1 \right| > \epsilon \right) = 0.
\end{equation}
This recovers \eqref{eq:erdf},  yielding the explicit value of the constant $C_\star$. Interestingly, if $x_t=o(\sqrt{t})$, the denominator in Theorem \ref{th:1}  depends only on $x$ and satisfies $f(t,x) \sim \pi^{1/2} xe^{\sqrt 2 x}$, which matches the growth of the extremal point process.\\ 

In \cite{ma2024doublejumpmaximumtwotype}, Ren and Ma, leveraging previous methods developed in \cite{shi1} and \cite{Pain1}, proved a  law of large numbers for a particle chosen according to the Gibbs measure in \cite[Proposition 2.6]{ma2024doublejumpmaximumtwotype}. Plugging in the following value we can link their result to our Theorem \ref{th:1}. Let $K>0$ and $x_t$ be such that $\log(t)^3\ll x_t\leq \sqrt{t}$ and plugging in $G(z)=\bbone_{[-K,0]}(z)e^{\sqrt{2}z}$, $r_t=(x_t+\frac{3}{2\sqrt{2}}\log(t))t^{-1/2} $, $h_t=t^{-1/2}$, one obtain that
\begin{equation}\label{eq:renma}
  \frac{ N(t,x_t)-N(t,x_t-K)}{f(t,x_t) } \xrightarrow[t\rightarrow \infty]{\mathbb{P}} Z_\infty \int_0^K \sqrt{2}e^{-\sqrt{2}z}dz=\left(1-e^{-\sqrt{2}K}\right)Z_\infty.
\end{equation}

The convergence in \eqref{eq:renma} is also a direct consequence of Theorem \ref{th:1}. Thus the results are consistent for the regime $x_t \in [\log(t)^{3+\epsilon}, \sqrt{t}]$. Note that Theorem \ref{th:1} extends their result by considering all the particles to the right of $m_t-x$ (i.e. $K$ in \eqref{eq:renma} could depend on $t$) and extending the interval of $x_t$ both near the tip for which their proof fails, and above $\sqrt{t}$ for which the Gaussian term $e^{-x^2/t}$ in $f(t,x)$ is non-negligible. One can also obtain a functional result from our work since we prove uniform convergence of the density of particles. \\

Our study is also related to several other results in the literature. In \cite{Pain1}, Pain studies the convergence of the Gibbs measure. One could wonder whether the convergence in probability in \cite[Theorem 1.1]{Pain1} can be strengthened  to an almost sure one. Despite the specific weighting of particles in the Gibbs measure, Corollary 1.3 in \cite{Pain1} establishes that the particles supporting the Gibbs measure with parameter $1+x/t$ are located around $m_t-x$ .  Therefore, for the regime (i) in \cite[Theorem 1.1]{Pain1}, reinforcing the convergence is not possible due to the mechanism exhibited in Theorem \ref{thm1:2}. Moreover, \cite[Theorem 1.1, (iv)]{Pain1} could help relax the condition on $R$ in Proposition \ref{espcondr}, potentially paving the way to proving, for $x\gg\sqrt{t}$, the almost sure convergence of $N(t,x)$, and  establishing a connection between $N(t,x)$ and $Z_{(t/x)^2}$.\\


The regime $x_t \asymp t$ (the number of particles at a linear distance from the tip) has been studied by Biggins in \cite{Biggins} for the branching random walk and by Glenz, Kistler, and Schmidt in \cite{glenz2017mckeans} for the branching Brownian motion. It is worth noting that our exact normalization $f(t,x) \propto x e^{ \frac{m_t}{t}x - \frac{x^2}{2t}}$ is algebraically consistent with these results. Evaluating particles at a distance $ct$ from the leading order tip $\sqrt{2}t$ corresponds to a distance $x = ct - \frac{3}{2\sqrt{2}}\log t$ relative to $m_t$. Substituting this into $f(t,x)$, the cross-terms in $c \log t$ cancel, recovering the classical scaling factor $t^{-1/2} \exp\left( (\sqrt{2}c - c^2/2)t \right)$ of the linear regime. However, while this deterministic normalization extrapolates to the linear regime, the limiting random variable fundamentally differs: in the bulk, the limit is driven by the additive martingale, whereas for any sublinear distance $x = o(t)$, the fluctuations are governed by the derivative martingale $Z_\infty$.

Finally, we note that in the physics literature, the density of particles at a given distance of $m_t$ has been studied by Brunet and Derrida in the context of BBM with selection \cite{PhysRevE.56.2597}, and by Munier for standard BBM \cite{Le_2022}.\\

\subsection{A conjecture}

Theorem \ref{thm1:2} establishes that, for $x_t \leq t^{1/3}$, the renormalised number of particles $N(t,x_t)/f(t,x_t)$ does \emph{not} converge almost surely. This stems from potential fluctuations at the front around times \( t - O(1) x_t^2 \).

By leveraging the techniques developed in this paper and using the barrier $\beta(s) = \sqrt{2}s$ for $s \geq r$, one can strengthen the convergence in Proposition \ref{th:genealogy} to an almost sure convergence via a Borel–Cantelli argument. Specifically, in the regime \( x_t \gg \sqrt{t} \) and \( R_t \gg \log(t)(t/x)^2 \), we obtain:

\begin{equation}\label{eq:gihfdg}
   \frac{\#\left\{(u,v) \in \mathcal{N}(t,x_t)^2 : Q_t(u,v) \geq R_t \right\}}{f(t,x_t)^2} \xrightarrow[t\to \infty]{} 0 \quad \text{a.s.}
\end{equation}

Crucially, in Proposition \ref{espcondr}, we require \( R_t = o(\sqrt{t}) \) and \( R_t = o(t/x_t) \), while Proposition \ref{th:genealogy} only requires \( R_t \to \infty \), allowing these two results to be applied jointly. However, the almost sure convergence in \eqref{eq:gihfdg} is currently proved only for \( R_t \gg \log(t)(t/x_t)^2 \). Therefore, to extend Theorem \ref{th:1} to an almost sure statement in the regime \( x_t \gg \sqrt{t} \), one must either broaden the admissible range for \( R_t \) in \eqref{eq:gihfdg} or relax the conditions on \( R_t \)  in Proposition \ref{espcondr}. This could be achieved by showing that a stronger barrier only removes particles that contribute negligibly to $N(t,x_t)$ almost surely.\\

We conjecture that the convergence in Theorem \ref{th:1} indeed holds almost surely for $x_t \gg \sqrt{t}$.
\begin{conjecture}
Let $x_t$ be such that, as $t \to \infty$, $x_t \geq \sqrt{t} \log(t)^{1+\epsilon}$ and $x_t = o(t)$. Then,
\begin{equation}
  \frac{N(t,x_t)}{f(t,x_t)} \xrightarrow[t \to \infty]{} Z_\infty \quad \text{a.s.}
\end{equation}
\end{conjecture}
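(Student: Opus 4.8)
\emph{Overall approach.} The plan is to realise $N(t,x_t)$ as being driven by the process observed at an intermediate time: for a well-chosen $r_t\to\infty$ with $r_t=o(t/x_t)$ (such $r_t$ exist since $x_t=o(t/\log t)$ forces $t/x_t\to\infty$), I would show
\[
\frac{N(t,x_t)}{f(t,x_t)}-Z(r_t)\xrightarrow[t\to\infty]{}0\quad\text{a.s.}
\]
along a deterministic subsequence $(t_n)$, together with a bound on the oscillation of $t\mapsto N(t,x_t)/f(t,x_t)$ over each block $[t_n,t_{n+1}]$. Granting this, since the derivative martingale satisfies $Z(s)\to Z_\infty$ a.s.\ \cite{Lalley1987ACL}, and hence $\sup_{s\ge r}|Z(s)-Z_\infty|\to 0$ a.s.\ as $r\to\infty$, the conjecture follows. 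If the rate obtained below is strong enough one may take $t_n=n$ and dispense with the oscillation step.

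\emph{Conditioning at time $r_t$.} By the branching property, conditionally on $\mathcal F_{r_t}$ one has $N(t,x_t)=\sum_{u\in\mathcal N_{r_t}}N^{(u)}$ with the $N^{(u)}$ independent, $N^{(u)}$ being the number of particles above $m_t-x_t$ in an independent BBM of length $t-r_t$ issued from $X_u(r_t)$. Imposing on each subtree, as in Proposition~\ref{espcondr} and \cite{Arguin2016}, a barrier along the line $s\mapsto(m_t/t)s$ on $[r_t,t]$ and writing $N^\beta(t,x_t)$ for the resulting count, the first goal is
\[
\E\big[N^\beta(t,x_t)\mid\mathcal F_{r_t}\big]=(1+o(1))\,Z(r_t)\,f(t,x_t),
\]
a refined Many-to-One computation in which the factor $x_t/t$ is produced by the Brownian-bridge-below-a-line estimate of Lemma~\ref{bmbar}, the derivative-martingale weight $(\sqrt2 r_t-X_u(r_t))e^{\sqrt2(X_u(r_t)-\sqrt2 r_t)}$ emerging upon summing single-particle expectations over $\mathcal N_{r_t}$. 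A pair bound then controls the conditional variance,
\[
\mathrm{Var}\big(N^\beta(t,x_t)\mid\mathcal F_{r_t}\big)\le C\,\E\big[\#\{(u,v)\ \beta\text{-admissible}: Q_t(u,v)>r_t\}\mid\mathcal F_{r_t}\big]\le\varepsilon_t\,f(t,x_t)^2,
\]
with $\varepsilon_t\to0$, which is exactly the conditional (and strengthened to almost sure) content of Theorem~\ref{thgenealogy} and of the estimate \eqref{gihfdg}, the barrier making a late split costly. A conditional Chebyshev inequality gives $\P\big(|N^\beta(t,x_t)/f(t,x_t)-Z(r_t)|>\delta\big)\le \varepsilon_t\delta^{-2}+o(1)$, so along a subsequence with $\sum_n\varepsilon_{t_n}<\infty$ a Borel--Cantelli argument yields $N^\beta(t_n,x_{t_n})/f(t_n,x_{t_n})\to Z_\infty$ a.s.

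\emph{Main obstacle: removing the barrier.} It remains to prove $0\le N(t,x_t)-N^\beta(t,x_t)=o(f(t,x_t))$ almost surely along $(t_n)$, i.e.\ that the particles of $\mathcal N(t,x_t)$ whose ancestral path overshoots the barrier are negligible with a \emph{summable}, not merely vanishing, probability. I expect this to be the hard part, and it is precisely the difficulty singled out in the open-questions section: the refined first-moment identity above is presently available only for $r_t=o(\sqrt t\wedge t/x_t)$, whereas the almost sure smallness of the barrier overshoot and of the late-genealogy pairs, cf.\ \eqref{gihfdg}, seems to demand a decomposition time $\gg\log(t)(t/x_t)^2$, which under $x_t\ge\sqrt t\log(t)^{1+\epsilon}$ is far larger than $\sqrt t$; the two requirements cannot be met at a single $r_t$. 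Closing the conjecture thus hinges on reconciling these ranges, either by pushing the Many-to-One identity (hence Proposition~\ref{espcondr}) to decomposition times of order $t/\mathrm{polylog}(t)$, or by establishing the almost sure barrier and genealogy bounds down to $r_t$ of order $\log^2 t$; a plausible ingredient is a sharp front-localisation statement such as \cite[Theorem~1.1~(iv)]{Pain1}, fed into a truncated second-moment or iterated-barrier argument on $N(t,x_t)-N^\beta(t,x_t)$ with a Borel--Cantelli-friendly rate. Finally, if only a genuine subsequence can be afforded, the interpolation over a block $[t_n,t_{n+1}]$ is itself delicate, because the window $m_t-x_t$ shifts by much more than the relevant timescale $(t/x_t)^2$ across the block, so controlling $|N(t,x_t)-N(t_n,x_{t_n})|$ there is a further reason to aim directly for an exponential-in-$r_t$ rate in the conditioning step.
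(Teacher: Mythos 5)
Conjecture~\ref{conjectureps} is not proven in the paper --- it appears only as a conjecture in the open-questions section --- so there is no proof to compare against. You correctly recognise this: what you have written is not a proof but a diagnosis of the obstruction, and that diagnosis matches the paper's own account. The conflict you identify is real: Proposition~\ref{espcondr} needs a decomposition time $R_t=o(\sqrt t)$ and $R_t=o(t/x_t)$, while the almost-sure version of the pair/overshoot estimate (eq.~\eqref{gihfdg}) is currently available only for $R_t\gg\log(t)(t/x_t)^2$; since $x_t=o(t/\log t)$ forces $t/x_t\gg\log t$, one has $\log(t)(t/x_t)^2\gg t/x_t$ for every admissible $x_t$, so the two ranges never overlap. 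One small precision: in the regime $x_t\ge\sqrt t\,\log(t)^{1+\epsilon}$ the binding upper constraint from Proposition~\ref{espcondr} is $R_t=o(t/x_t)$, which is already $\ll\sqrt t$, so comparing to $\sqrt t$ is true but not the tight comparison --- the conclusion is unchanged. Your proposed remedies (extending Proposition~\ref{espcondr} to larger decomposition times, or establishing a Borel--Cantelli-compatible rate for the barrier overshoot at much smaller $R_t$, possibly via \cite[Theorem~1.1~(iv)]{Pain1}) coincide with the directions the author suggests. In short, you have not proven the conjecture, and you were right not to claim to; your assessment of where the gap lies and what would close it is accurate.
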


\subsection{Acknowledgments}
I am deeply grateful to my PhD advisor, Julien Berestycki, for his guidance, support, and the many insightful discussions throughout the development of this work, as well as for his helpful suggestions on the writing of the paper. I would particularly like to thank Louis Chataignier for his suggestions and for discussing several improvements, notably regarding the genealogy part of the paper. Finally, I thank the many colleagues at various conferences for the stimulating conversations that contributed to this work.

\section{Proof of the main results}\label{sec:proofff}
In this section, we prove the theorems stated in Subsection~\ref{sub:mainresult}. We begin by establishing Theorem~\ref{th:1}, proceeding via several intermediate results concerning the path localisation and genealogy of the particles. The proofs of these intermediate results are deferred to Section~\ref{Sec:proofprop}. We then turn to the proof of Theorem~\ref{thm:ut} and Theorem~\ref{thm1:2}.

To prove Theorem \ref{th:1}, we essentially follow the approach pioneered in \cite{arguin2012ergodic}: the key steps are first to show that most of the particles that contribute to $N(t,x)$ have a well-localised path. An extra technicality in this first step is to prove that, since $x\to \infty$, the particles contributing to $N(t,x)$ are actually near $m_t -x +O(1)$. We then use a concentration argument to show that the number of such particles is close to its conditional expectation given the initial behaviour of the branching Brownian motion. Note that this is precisely the strategy employed in \cite{glenz2017mckeans}. However, as usual, adapting it to the critical case involves some subtleties.

Let us first introduce some notation related to barrier events. For $a<b$, denote
\[
\mathcal{N}(t,[a,b]) = \left\{u \in \mathcal{N}_t: m_t - X_u(t) \in [a,b] \right\}.
\]
For an interval $I \subset [0,t]$ and a function $\beta \colon I \to \mathbb{R}$, denote by $\mathcal{H}^\beta(t,I)$ the set of particles which remain under the curve $s \mapsto \beta(s)$ during the time interval $I$,
\[
\mathcal{H}^\beta(t,I) \coloneqq \left\{u\in \mathcal{N}_t : X_u(s)\leq \beta(s) \; \forall s \in I \right\}.
\]
We further define
\[
\nB{\beta}{t, I,x} \coloneqq \mathcal{N}(t,x) \cap \mathcal{H}^\beta(t, I), \quad
\nB{\beta}{t, I,[a,b]} \coloneqq \mathcal{N}(t,[a,b]) \cap \mathcal{H}^\beta(t, I),
\]
the set of particles above $m_t - x$ (in $m_t - [a,b]$) at time $t$ which remain under $\beta$ during $I$. Similarly,
\[
\nOB{\beta}{t,I,x} \coloneqq \mathcal{N}(t,x) \setminus \mathcal{H}^\beta(t,I), \quad
\nOB{\beta}{t,I,[a,b]} \coloneqq \mathcal{N}(t,[a,b]) \setminus \mathcal{H}^\beta(t,I),
\]
denote the sets of particles above $m_t - x$ (in $m_t - [a,b]$) at time $t$ which cross $\beta$ during $I$. For each of the sets we have defined, we denote its cardinality by the corresponding capital letter; for example, $\NB{\beta}{t, I,x} \coloneqq \#\nB{\beta}{t, I,x}$. Finally, define the family of functions $\beta_t(s) \coloneqq s\frac{m_t}{t}$ and $\Gamma_{t,\alpha}(s) \coloneqq (s \wedge (t-s))^\alpha$ for $\alpha \geq 0$.\\

First, we state that for large $K$, the normalised quantity $N(t, x-K)/f(t,x)$ becomes negligible in the limit as $x \to \infty$.
\begin{proposition}\label{prop:Kdist}For any $\delta>0$,
\begin{equation}
    \lim_{K\to\infty} \lim_{x\to\infty}\sup_{t\geq x}\Prob\left(\frac{N(t,x-K)}{f(t,x)} > \delta\right) = 0.
\end{equation}

\end{proposition}

The following proposition allows us to only consider particles whose trajectories remain below a prescribed barrier curve.
\begin{proposition}\label{pathlocfin}
Let $\alpha\in [0,1/2)$, $\lambda>\frac{1}{2}$, $\rho_{t,r}(s)= \beta_t(s)-\bbone_{s\in [r,t-r]}\Gamma_{t,\alpha}(s)$. For any $\delta>0$,
\begin{equation}
    \lim_{r\to \infty}\sup_{t,x: t>3r,  r^{\lambda}\leq x\leq t}\Prob\left( \frac{\NOB{\rho_{t,r}}{t, [r,t],x} }{f(t,x)} >\delta\right) =0.
\end{equation}
    
\end{proposition}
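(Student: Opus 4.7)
The plan is to bound $\mathbb{E}[\#\overline{\mathcal{N}}^{\rho_{t,r}}(t,[r,t],x)]/f(t,x)$ and conclude by Markov's inequality. A naive first-moment via the many-to-one lemma gives $e^t\,\mathbb{P}(B_t \ge m_t - x,\ \exists s: B_s > \rho_{t,r}(s)) \asymp (t/x)\,f(t,x)$, and since the ratio $t/x$ is in general unbounded, this alone does not suffice: under many-to-one, the Brownian bridge conditioned on $B_t \ge m_t - x$ naturally exceeds the low barrier $\rho_{t,r}$ whenever $\Gamma_{t,\alpha}(s) > sx/t$, which happens in particular for $s$ small. The extra factor $x/t$ must come from a genuinely BBM-specific entropic repulsion argument.

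To implement this, I would localise the trajectories further by combining the first moment with a ballot-type Brownian bridge estimate. Concretely, passing to the coordinates $Y_s = \beta_t(s) - B_s$ (in which the natural scale for BBM trajectories contributing to $\mathcal{N}(t,x)$ is $\sqrt{s\wedge(t-s)}$), the bad event becomes $Y_s < \Gamma_{t,\alpha}(s) = (s\wedge(t-s))^\alpha$ at some $s \in [r, t-r]$. Typical BBM trajectories satisfy $Y_s > 0$ throughout, and restricting the many-to-one computation to this event produces exactly the missing ballot factor $x/t$: the survival probability of a Brownian bridge ending at $Y_t = x$ over $[0,t]$. Within this restricted framework, I would use the fact that conditioned to stay positive the bridge behaves like a $3$D Bessel bridge of typical scale $\sqrt{s\wedge(t-s)}$, so that dipping below $(s\wedge(t-s))^\alpha$ with $\alpha<1/2$ is a rare event.

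A standard entropic repulsion estimate for the positive bridge gives, at a single time $s$, the bound $\mathbb{P}(Y_s < (s\wedge(t-s))^\alpha) \le C\,(s\wedge(t-s))^{\alpha - 1/2}$; union-bounding over a unit grid in $s \in [r, t-r]$ (absorbing modulus-of-continuity errors into an $O(\log t)$ shift of the barrier) and using $\alpha < 1/2$ yields a total bulk contribution of order $r^{\alpha - 1/2}$, vanishing uniformly in $(t,x)$ as $r\to\infty$. For the tip range $\tau \in (t-r, t]$ the barrier in $Y$-coordinates is $0$ (no $\Gamma$ correction), and a crossing means $Y_s < 0$ there; combined with the endpoint $Y_t = x$ and residual time $t-s < r$, this requires a Gaussian downward fluctuation of size at least $x - \sqrt{2}(t-s)$ in time $< r$, costing a factor at most $e^{-(x-\sqrt{2}(t-s))^2/(2(t-s))}$. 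Under $x \ge r^\lambda$ with $\lambda > 1/2$ this is exponentially small in $r^{2\lambda - 1}$, which dominates any residual $t/x$ inflation and is therefore negligible uniformly in $t$.

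The main obstacle will be the careful entropic-repulsion estimate for the positive Brownian bridge at the relevant scales, and matching it with the many-to-one setup to correctly produce the ballot factor $x/t$ that converts the naive first-moment inflation into a bound at the scale of $f(t,x)$. Additional care is needed in the intermediate regime $x \asymp \sqrt{t}$, where the bulk and tip contributions interact, and in the regions $s\approx r$ and $s \approx t-r$ where the entropic estimate is least favourable. The hypothesis $\alpha < 1/2$ is exactly what renders the bulk integrand $(s\wedge(t-s))^{\alpha - 1/2}$ summable, while $\lambda > 1/2$ is exactly what makes the tip contribution exponentially small.
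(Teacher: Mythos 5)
Your high-level architecture is the right one and matches the paper's: reduce to a first-moment bound, extract the missing ballot factor $x/t$ from an entropic-repulsion restriction in the coordinates $Y_s=\beta_t(s)-B_s$, treat the bulk $[r,t-r]$ and the tip $(t-r,t]$ separately, and use $x\ge r^\lambda$, $\lambda>1/2$ only in the tip. However, two of your key quantitative steps do not work as written. First, the bulk estimate: union-bounding the single-time marginal $\mathbb{P}(Y_s<(s\wedge(t-s))^\alpha)$ over a unit grid in $[r,t-r]$ does not give $O(r^{\alpha-1/2})$. With the bound you quote, $\sum_{s}(s\wedge(t-s))^{\alpha-1/2}\asymp t^{\alpha+1/2}$ diverges in $t$ (the exponent $\alpha-1/2\in(-1/2,0)$ is not summable); and the correct Bessel-3 marginal is in fact $\asymp (s\wedge(t-s))^{3(\alpha-1/2)}$, whose grid sum only converges for $\alpha<1/6$, not for all $\alpha\in[0,1/2)$. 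The estimate must instead be a first-passage bound over dyadic blocks (probability $\asymp 2^{k(\alpha-1/2)}$ that the Bessel-type bridge ever dips below $2^{k\alpha}$ during $[2^k,2^{k+1}]$), which is exactly what Bramson's Proposition 6.1 encodes; the paper invokes it (Lemmas \ref{prop61bramson} and \ref{prop63bramson}) to get a bound $o_R(1)$ uniform in $t\ge 3r$ and in the endpoint. Relatedly, your justification for restricting to $\{Y_s>0\ \forall s\}$ (``typical BBM trajectories satisfy $Y_s>0$'') is false as stated: particles do exceed $\beta_t$; what holds with high probability is that no particle exceeds $\beta_t+\Gamma_{t,\gamma}$ on $[r,t-r]$ (Theorem \ref{envsuparguin}), and the paper must therefore run the repulsion argument conditionally on $Y_s\ge-\Gamma_{t,\gamma}$ and use the monotonicity lemmas to compare the barriers $-\Gamma_{t,\gamma}$, $0$ and $+\Gamma_{t,\alpha}$.

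Second, the tip estimate. The Gaussian cost $e^{-(x-\sqrt2(t-s))^2/(2(t-s))}\le e^{-cr^{2\lambda-1}}$ does \emph{not} ``dominate any residual $t/x$ inflation uniformly in $t$'': for fixed $r$ this factor is a constant while $\sup_t t/x\ge \sup_t t/t^{?}$ is unbounded, so the product does not vanish in the required uniform sense. The correct combination is to keep the bulk localisation (the particle stayed below $\beta_t-\Gamma_{t,\alpha}$ on $[r,t-r]$, hence sits at depth $y\ge r^\alpha$ below $\beta_t(t-r)$), extract the ballot factor $\asymp y^2\sqrt r/t$ from that event, multiply by the crossing probability $e^{-2zy/r}$ on $[t-r,t]$, and only then use the Gaussian endpoint cost $e^{-(x+r^\alpha-K)^2/(2r)}$ to supply the $o_r(1)$; this is the computation \eqref{gfdtg}--\eqref{continuinggdfg}. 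Moreover your tip argument pins the endpoint at $Y_t=x$, whereas $\NOB{\rho_{t,r}}{t,[r,t],x}$ counts all endpoints $Y_t=z\le x$, and for $z\ll x$ the tip crossing is not at all unlikely; these particles must be removed beforehand by reducing to the window $z\in[x-K,x]$, which is the role of Proposition \ref{Kdist}. With the dyadic first-passage estimate in the bulk, the upper-envelope restriction, the reduction to $[x-K,x]$, and the combined bulk-plus-tip bound, your outline becomes the paper's proof.
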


Propositions \ref{prop:Kdist} and \ref{pathlocfin} are proven in Subsection \ref{subSec:proofprop1}\\

The following proposition establishes the almost sure convergence of the normalised conditional expectation for localised particles to \( Z_\infty \), under asymptotic conditions on suitably scaled parameters.
\begin{proposition}\label{espcondr}
Let $x_t, K, r, R$ be such that, as $t\to \infty$, \( x_t \to \infty \) with \( x_t = o(t)\), \( K \to \infty \), \( r \to \infty \), \( R \to \infty \) with \( R = o(\sqrt{t}) \), \( R = o(t/x_t) \), and \( R \geq r \). Then the following holds:
\begin{equation}\E\left[ \NB{\beta_t}{t, [r,t], [x_t-K,x_t]}\mid \mathcal{F}_{R} \right]=  f(t,x_t)Z_R(1+o_t(1)) \quad \text{a.s,} 
\end{equation}
where \( o_t(1) \to 0 \) almost surely as \( t \to \infty \).
\end{proposition}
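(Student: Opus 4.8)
The plan is to condition on $\mathcal{F}_R$, apply the branching Markov property at time $R$, and use the Many-to-One lemma on each of the independent subtrees. Since $r \le R$, the event $\{X_u(s)\le\beta_t(s)\ \forall s\in[r,R]\}$ is $\mathcal{F}_R$-measurable, so
\[
\E\!\left[\NB{\beta_t}{t,[r,t],[x_t-K,x_t]}\,\middle|\,\mathcal{F}_R\right]=\sum_{u\in\mathcal{N}_R}\bbone_{\{X_u(s)\le\beta_t(s)\,\forall s\in[r,R]\}}\;\psi_t\!\left(X_u(R)\right),
\]
where $\psi_t(y)=e^{t-R}\,\P\big(y+B_{t-R}\in[m_t-x_t,\,m_t-x_t+K],\ y+B_{s-R}\le\beta_t(s)\ \forall s\in[R,t]\big)$ is the expected number of descendants emitted from $(R,y)$ that land in the window at time $t$ while staying below $\beta_t$ on $[R,t]$, and $B$ is a standard Brownian motion.

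The core of the argument is a sharp estimate of $\psi_t(y)$. I would factor the Brownian probability as the Gaussian density of the endpoint $m_t-x_t+w$ ($w\in[0,K]$) times the probability that the associated Brownian bridge over $[R,t]$ stays below $s\mapsto\beta_t(s)$; subtracting that line, the latter is the probability that a Brownian bridge from $a:=\beta_t(R)-y$ to $b:=x_t-w$ over a span $t-R$ remains nonnegative, i.e. $1-e^{-2ab/(t-R)}$ (Lemma~\ref{bmbar}). Plugging in $m_t=\sqrt2\,t-\tfrac{3}{2\sqrt2}\log t$ and using $R=o(\sqrt t)$, $R=o(t/x_t)$, $x_t=o(t/\log t)$ (and that $K$ grows slowly) to linearise $1-e^{-2ab/(t-R)}=\tfrac{2ab}{t}(1+o_t(1))$, to simplify the prefactor $\tfrac{t^{3/2}}{\sqrt{2\pi(t-R)}}=\tfrac{t}{\sqrt{2\pi}}(1+o_t(1))$, and to check that the Gaussian exponent contributes exactly the factor $e^{-x_t^2/(2t)}$, and then integrating the remaining $e^{-\sqrt2 w}$ over $w\in[0,K]$ (yielding the constant $(1-e^{-\sqrt2 K})/\sqrt2$), one should obtain, uniformly over $y\le\beta_t(R)$ with $-y$ not too large,
\[
\psi_t(y)=f(t,x_t)\,\big(1-e^{-\sqrt2 K}\big)\,\big(\sqrt2\,R-y\big)\,e^{\sqrt2(y-\sqrt2\,R)}\,(1+o_t(1)).
\]

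Summing over $u\in\mathcal{N}_R$ and using $\beta_t(R)-y=(\sqrt2 R-y)-\tfrac{3R\log t}{2\sqrt2\,t}$, the correction being absorbed by $R\log t/t\to0$ and the a.s. convergence $W(R):=\sum_{u\in\mathcal{N}_R}e^{\sqrt2(X_u(R)-\sqrt2 R)}\to0$ of the additive martingale, the right-hand side becomes
\[
f(t,x_t)\,\big(1-e^{-\sqrt2 K}\big)(1+o_t(1))\sum_{u\in\mathcal{N}_R}\bbone_{\{X_u(s)\le\beta_t(s)\,\forall s\in[r,R]\}}\big(\sqrt2 R-X_u(R)\big)e^{\sqrt2(X_u(R)-\sqrt2 R)}.
\]
It then remains to prove that this barrier-truncated derivative martingale tends a.s. to $Z_\infty$ as $r,R\to\infty$: the untruncated sum is exactly $Z(R)\to Z_\infty$ a.s., and since for $r$ large the barrier $\beta_t(s)\approx\sqrt2 s$ lies well above the front $m_s$ on $[r,R]$, the restriction deletes only a vanishing fraction of the weight — which I would make rigorous by a conditional Many-to-One bound on the window-particles whose ancestor crosses $\beta_t$ during $[r,R]$. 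Together with $1-e^{-\sqrt2 K}\to1$ and $Z(R)=Z_\infty(1+o_t(1))$ a.s., this yields the proposition.

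The principal obstacle is making all of this uniform and \emph{almost sure} rather than in probability. Two points need care. First, the estimate for $\psi_t(y)$ must hold for \emph{every} $y$, in particular for particles $u$ with $X_u(R)$ atypically far to the left, where $1-e^{-2ab/(t-R)}$ no longer linearises; one must then keep the Gaussian exponent, whose decay in $-y$, combined with classical left-tail bounds for the derivative martingale (equivalently, for the position of the leftmost particle at time $R$), still forces this tail to be $o\big(f(t,x_t)Z(R)\big)$. Second, the a.s. control of the $[r,R]$-barrier: one cannot simply invoke ``with high probability no particle crosses $\beta_t$'' and must instead bound the $\mathcal{F}_R$-conditional expected number of window-particles whose ancestor crossed $\beta_t$ on $[r,R]$, using that descending back below $\beta_t\approx\sqrt2 s$ after an excursion above it is costly in the ballot sense. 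I expect this second point to be the main technical hurdle.
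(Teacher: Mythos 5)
Your computational backbone coincides with the paper's: apply the branching Markov property at time $R$, use Many-to-One on each subtree, factor the Brownian probability into an endpoint density times a barrier-bridge probability, invoke Lemma~\ref{bmbar}, linearise $1-e^{-2ab/(t-R)}\sim 2ab/t$, expand the Gaussian exponent, integrate over the window $[x-K,x]$ (indeed the factor $(1-e^{-\sqrt2K})/\sqrt2$ appears and is absorbed since $K\to\infty$), and recognise the resulting weighted sum over $\mathcal{N}_R$ as $Z(R)$ after trading $R\tfrac{m_t}{t}-X_p(R)$ for $\sqrt2R-X_p(R)$ via $R\log t/t\to0$ and $W_R\to0$ a.s.

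Where your proposal diverges from the paper — and where it is left incomplete — is in the resolution of the two "obstacles" you flag. You propose (i) keeping the Gaussian exponent and using left-tail bounds for the derivative martingale to control atypically left particles where the linearisation fails, and (ii) a conditional Many-to-One / ballot-type estimate on window particles whose ancestor crossed $\beta_t$ on $[r,R]$. The paper dispatches both with a single a.s.\ localisation: since $\limsup_{s\to\infty}(M_s-\sqrt2s)=-\infty$ a.s.\ (and symmetrically for the leftmost particle), eventually a.s.\ \emph{every} $p\in\mathcal N_R$ satisfies $|X_p(R)|\le\sqrt2R$, which gives $y_p=O(R)=o(\sqrt t)\wedge o(t/x_t)$ uniformly in $p$, so the linearisation $1-e^{-2zy_p/(t-R)}=\tfrac{2zy_p}{t-R}(1+o_t(1))$ holds uniformly over all particles and all $z\in[0,x]$ — no tail term needs separate treatment. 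The same fact, applied on $[r,R]$ with $r\to\infty$ and $R\log t/t\to0$ (so $\beta_t(s)=\sqrt2s-\tfrac{3s\log t}{2\sqrt2 t}\ge\sqrt2 s -o(1)$ there), forces $\bbone_{\{X_p(s)\le\beta_t(s)\,\forall s\in[r,R]\}}=1$ for all $p$ eventually a.s., so the barrier truncation of $Z(R)$ is literally the identity and no crossing-ancestor estimate is required. You should look for this argument: it simultaneously removes both of what you correctly identified as the delicate points, and it is what makes the conclusion a genuine a.s.\ statement rather than an in-probability one.
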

The proof of Proposition~\ref{espcondr} is given in Subsection~\ref{subSec:proofprop2}.\\

We now state results to quantify the difference between the normalised conditional expectation and the actual number of localised particles. The following lemma bounds the mean squared difference between the number of localised particles and its conditional expectation at time $R$, in terms of the number of pairs of particles that branch after time $R$.
\begin{lemma}\label{lemmaL2bound}
For parameters \( t \geq 0 \), \( R \in [0,t] \), \( X \subset \mathbb{R} \), \( I \subset [0,t] \) and a function \( \beta \colon I \to \mathbb{R} \):
\begin{equation}
        \begin{split}
            \E&\left[ \left(N^{\beta}(t, I,X) - \E\left[N^{\beta}(t, I,X) \mid \mathcal{F}_R \right] \right)^2 \right] \\
            &\qquad \leq \E\left[\#\left\{(u,v) \in \mathcal{N}^\beta(t,I,X)^2 : Q_t(u,v) \geq R \right\}\right].
        \end{split}
\end{equation}
\end{lemma}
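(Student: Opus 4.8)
The plan is to recognise the left-hand side as $\E\left[\operatorname{Var}\left(N\mid\mathcal{F}_R\right)\right]$, where $N:=\NB{\beta}{t,I,X}$, and then to evaluate this conditional variance by splitting the branching Brownian motion at time $R$. Since $N\leq\#\mathcal{N}_t$ and $\#\mathcal{N}_t$ (a Yule process at time $t$) has finite second moment, we have $N\in L^2$, so the identity $\E[(N-\E[N\mid\mathcal{F}_R])^2]=\E[\operatorname{Var}(N\mid\mathcal{F}_R)]$ is legitimate and the right-hand side is what we must bound.

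Next I would partition $N=\sum_{w\in\mathcal{N}_R}N_w$, where $N_w:=\#\{u\in\nB{\beta}{t,I,X}:w\leq u\}$ counts the qualifying particles descended from $w$; this is a genuine partition because every $u\in\mathcal{N}_t$ has a unique ancestor in $\mathcal{N}_R$. The structural point is that, conditionally on $\mathcal{F}_R$, the family $(N_w)_{w\in\mathcal{N}_R}$ is independent. Indeed, whether a particle $u\geq w$ belongs to $\nB{\beta}{t,I,X}$ is determined by (i) its trajectory on $I\cap[0,R]$, which coincides with the ancestral trajectory of $w$ and is therefore $\mathcal{F}_R$-measurable, and (ii) its terminal position together with its trajectory on $I\cap[R,t]$, which are functionals of the branching Brownian motion issued from $w$ started at the $\mathcal{F}_R$-measurable height $X_w(R)$. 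By the branching property these subtrees are mutually independent given $\mathcal{F}_R$, so $\operatorname{Var}(N\mid\mathcal{F}_R)=\sum_{w\in\mathcal{N}_R}\operatorname{Var}(N_w\mid\mathcal{F}_R)\leq\sum_{w\in\mathcal{N}_R}\E[N_w^2\mid\mathcal{F}_R]$.

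It then remains to identify $\sum_{w\in\mathcal{N}_R}N_w^2$. Expanding the square, $\sum_{w}N_w^2=\#\{(u,v)\in\nB{\beta}{t,I,X}^2:u\text{ and }v\text{ share their ancestor in }\mathcal{N}_R\}$, since an ordered pair of qualifying particles with a common time-$R$ ancestor has exactly one such ancestor and is thus counted exactly once. Two particles share their time-$R$ ancestor precisely when their ancestral lines agree on $[0,R]$, i.e. when $Q_t(u,v)\geq R$; the diagonal pairs give $Q_t(u,u)=t\geq R$, and any off-diagonal pair in this set branches strictly after $R$ almost surely, because with probability one no particle branches at the deterministic time $R$. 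Hence, when $R<t$, $\sum_{w}N_w^2=\#\{(u,v)\in\nB{\beta}{t,I,X}^2:Q_t(u,v)>R\}$ a.s. (when $R=t$ both sides of the asserted inequality vanish, so there is nothing to prove). Taking expectations — using the tower property on the nonnegative sum together with the previous display — yields the claim.

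The argument is essentially bookkeeping, and I do not anticipate any analytic difficulty. The step requiring the most care is the conditional independence in the second paragraph: one should formalise that the barrier constraint factorises through time $R$ (the portion on $I\cap[0,R]$ being $\mathcal{F}_R$-measurable), so that the branching property genuinely applies. A minor secondary point is the passage from $Q_t(u,v)\geq R$ to $Q_t(u,v)>R$, which relies on the harmlessness of the diagonal ($Q_t=t$) and on the null event $\{\text{some branching time equals }R\}$.
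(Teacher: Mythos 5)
Your proposal is correct and follows essentially the same approach as the paper: decompose $N$ by time-$R$ ancestor, exploit conditional independence of the subtrees given $\mathcal{F}_R$ to reduce the conditional variance to a sum of per-ancestor conditional second moments, and identify that sum with the count of qualifying pairs sharing a time-$R$ ancestor, i.e.\ pairs with $Q_t > R$. Your write-up is in fact slightly more careful than the paper's on two minor points (invoking the $L^2$ property to justify the conditional-variance identity, and the passage from $Q_t\ge R$ to $Q_t> R$ via the a.s.\ absence of branching at the fixed time $R$), but the underlying argument is identical.
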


Finally, the expected number of $\beta_t$-localised particle pairs branching after time $R$ is asymptotically negligible relative to $f(t,x)^2$:
\begin{proposition}\label{L2boundlocancestor}
Let $x_t, K, r, R_t$ be such that, as $t\to \infty$, \( x_t \to \infty \) with \( x_t = o(t) \), \( R_t \to \infty \), $2r\leq R_t$, $3r \leq t$ and $K\in \mathbb{R}$. Then the following holds:

\begin{equation}
    \frac{\E\left[\#\left\{(u,v) \in \mathcal{N}^{\beta_t}(t,[r,t],[x_t-K,x_t])^2 : Q_t(u,v) \geq R_t \right\}\right]}{f(t,x_t)^2}\xrightarrow[t\rightarrow \infty]{}0.
\end{equation}
\end{proposition}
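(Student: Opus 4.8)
The plan is to bound the expected number of "bad pairs'' by a second-moment (Many-to-Two) computation, decomposing over the possible splitting times. Two particles $u,v$ with $Q_t(u,v) = q$ share a common ancestral trajectory on $[0,q]$, then evolve as two independent Brownian motions on $[q,t]$, both subject to the barrier constraint on $[r,t]$ and both landing in $m_t - [x-K,x]$ at time $t$. By the Many-to-Two Lemma, the expected number of such pairs with $q > R_t$ is, up to the branching-rate factor, an integral over $q \in (R_t, t]$ of the probability that a single Brownian bridge from $0$ at time $0$ to some intermediate level at time $q$ stays below $\beta_t$ on $[r,q]$, times the square of the conditional probability that a Brownian bridge from that intermediate level at time $q$ to $m_t - [x-K,x]$ at time $t$ stays below $\beta_t$ on $[q, t-r]$. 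After the usual change of measure that tilts the Brownian motion by $e^{\sqrt2(\text{position})}$ and removes the linear drift $\beta_t(s) = s m_t/t$, both factors become barrier-staying-positive probabilities for Brownian bridges, which are controlled by the ballot-type estimates (the analogue of Lemma~\ref{bmbar} used throughout, giving a factor of order $\frac{y_q z_q}{q}$ for a bridge of length $q$ between heights $y_q$ and $z_q$).

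Concretely, I would write $\E[\#\{\dots\}] \asymp e^{2\sqrt2 x} e^{-x^2/t}\int_{R_t}^{t} \left( \text{something of order } q^{-1/2}\right) \cdot \left(\text{staying-positive prob. on }[q,t]\right)^2 \, dq$, where the first $q^{-1/2}$-type factor comes from the single ancestral path on $[0,q]$ being a Brownian motion (not a bridge) forced to stay below the barrier and return to a neighbourhood of it at time $q$, and the squared factor accounts for the two independent descendant bridges. The key point is that $f(t,x)^2 \asymp x^2 e^{2\sqrt2 x} e^{-x^2/t}$, so after dividing we must show $\int_{R_t}^{t} (\text{integrand}) \, dq = o(x^2)$. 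The heart of the matter is that for each fixed split time $q$, the contribution of a pair splitting at $q$ is smaller than $f(t,x)^2$ by a factor roughly $1/q$ (each descendant bridge on $[q,t]$ contributes its own ballot factor, and two such factors instead of one give an extra power of the relevant length), and summing $\int^t dq/q \asymp \log t$ against the remaining terms still leaves something $o(x^2)$ provided $x \to \infty$ — indeed, since $x = o(t/\log t)$, one has $\log t = o(t/x)$, and the length scales in the ballot estimates conspire to give the required decay. One should split the integral at the characteristic times $t - x^2$ (resp. $(t/x)^2$) separating the "excursion'' and "free Brownian bridge'' regimes described in the discussion, estimating each piece with the appropriate bridge length, and also treat $q$ close to $t$ (where the two descendants barely separate) where the constraint that both land in the same $O(K)$-window forces an extra small factor.

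I expect the main obstacle to be the uniformity in $t$ combined with tracking the correct polynomial-in-length factors from the ballot/staying-positive estimates across the two regimes $x \ll \sqrt t$ and $x \gg \sqrt t$ simultaneously; in particular, getting the bound to be $o(1)$ rather than merely $O(1)$ requires that the extra ballot factor from the \emph{second} independent descendant bridge genuinely beats the $\log t$ (or larger) loss from integrating over split times $q$, and near $q = t$ one must be careful that the forced coincidence of the two endpoints in a window of width $O(K)$ is what saves the estimate. A secondary technical point is that the ancestral path on $[0,r]$ is unconstrained by the barrier (the barrier only acts on $[r, t-r]$), so the $[0,r]$ and $[t-r,t]$ pieces contribute only $O(1)$ factors (depending on $r$ and $K$) that do not interfere with the limit in $t$; this is exactly why the hypothesis only needs $2r \le R_t$ and $r$ fixed as $t \to \infty$. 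Once the integral bound is in place, the claim follows by dominated convergence / direct estimation, with no appeal to the concentration machinery — this is purely a first-moment computation on pairs.
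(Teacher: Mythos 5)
Your overall plan matches the paper's: reduce to a Many-to-Two first-moment computation on pairs, integrate over the split time $\gamma \in (R_t,t]$, condition on the ancestor's distance $y$ below $\beta_t$ at time $\gamma$, and control the ancestral and descendant pieces by ballot-type estimates (Lemma~\ref{bmbar}, Lemma~\ref{reflectionprinciple0}) and Gaussian density bounds. You also correctly note that the near-$\gamma=t$ regime and the endpoint window of width $O(K)$ need separate treatment, and that the $[0,r]$ and $[t-r,t]$ pieces only contribute constants. So the skeleton is right.

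Where your picture goes wrong is in locating the source of the $o(1)$. You claim the contribution at split time $q$ is ``smaller than $f(t,x)^2$ by a factor roughly $1/q$'' and that the resulting $\int^t dq/q \asymp \log t$ loss must be beaten using $x = o(t/\log t)$. That is not how the estimate actually behaves: after the $y$-integral, the contribution at $\gamma$ away from $t$ scales like $\sqrt r\,\gamma^{-3/2}(t-\gamma)^{-3/2}$ times a controlled $t$-power, so the integral over $\gamma \in [R_t, t-x/10]$ is of order $R_t^{-1/2}+x^{-1/2}$ — it is the hypothesis $R_t\to\infty$ that makes the bulk of the integral vanish, a mechanism that is absent from your account and without which the ratio would be bounded but not $o(1)$. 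The condition $x = o(t/\log t)$ enters only to kill the separate case $\gamma > t-x/10$ (where the ancestor is forced to sit at height $y\gtrsim x$), via a factor of the form $e^{\frac{3}{20}x\log t/t}$. Finally, the paper does not split the integral at your proposed characteristic times $t-x^2$ or $(t/x)^2$; it handles both regimes of $x$ at once by cutting at $\gamma = t-x/10$ and $\gamma = t-1$, together with a case distinction on the sign of $\frac{m_t}{t}(t-\gamma)+y-x$. Your regime split could be made to work, but the bookkeeping you describe (a $1/q$ rate and a $\log t$ loss) would need to be corrected before the argument closes.
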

The proofs of Lemma~\ref{lemmaL2bound} and Proposition~\ref{L2boundlocancestor} are given in Subsection~\ref{subSec:proofprop3}.\\
We are now ready to prove Theorem \ref{th:1}.
\begin{proof}[Proof of Theorem \ref{th:1}]
Let \( I = [r, t] \) and \( \beta_t(s) = s\frac{m_t}{t} \). For any \( r, R, K > 0 \), we have the decomposition:
\begin{equation}\label{decompproba}
\begin{split}
    N(t,x) &= N(t,x-K) + \NOB{\beta_t}{t, I,[x-K,x]} \\
    &\quad + \E\left[N^{\beta_t}\left(t,I,[x-K,x]\right) \mid \mathcal{F}_{R} \right] \\
    &\quad + \left(N^{\beta_t}(t, I,[x-K,x]) - \E\left[N^{\beta_t}(t, I,[x-K,x]) \mid \mathcal{F}_{R} \right]\right).
\end{split}
\end{equation}
Therefore,
\begin{equation}\label{decompproba2}
\begin{split}
    \Prob&\left(\left|\frac{N(t,x)}{f(t,x)} -Z_\infty \right|>\delta\right) \leq \Prob\left(\left|\frac{N(t,x-K)}{f(t,x)}\right|>\frac{\delta}{4}\right) 
    \\& \qquad+\Prob\left(\left|\frac{\NOB{\beta_t}{t, I,[x-K,x]} }{f(t,x)}\right|>\frac{\delta}{4}\right) 
    \\&\qquad+\Prob\left(\left|\frac{\E\left[N^{\beta_t}(t,I,[x-K,x])\mid \mathcal{F}_{R} \right]}{f(t,x)} -Z_\infty \right|>\frac{\delta}{4}\right)
    \\&\qquad+\Prob\left(\left(\frac{N^{\beta_t}(t, I,[x-K,x]) - \E\left[N^{\beta_t}(t, I,[x-K,x])\mid \mathcal{F}_{R} \right]}{f(t,x)}\right)^2>\left(\frac{\delta}{4}\right)^2 \right).
\end{split}
\end{equation}

We aim to demonstrate that the LHS of \eqref{decompproba2} converges to zero as $t\to \infty$ for $x=x_t$ satisfying $x_t\rightarrow \infty$ and $x_t=o(t)$. Since the LHS is independent of $r,R,K$, we choose those quantities as functions of $t$ diverging to infinity with $t$ in the most convenient way:  
\begin{align*}
    R&=R_t  \text{ such that } R=o(\sqrt{t}) \text{ and } R=o(t/x_t), \\
    r&=r_t \text{ such that } 3r_t\leq t, \, r^{\lambda}\leq x_t \text{ for a } \lambda>\frac{1}{2}\text{ and } r_t=o(R_t),\\
    K&=K_t \text{ such that } K_t=o(r_t).
\end{align*}

We now prove that each term on the RHS of \eqref{decompproba2} converges to zero as $t\to \infty$.
The first term on the RHS converges to 0 by Proposition \ref{prop:Kdist}. Next, observe that 
\begin{equation}
    \Prob\left(\left|\frac{\NOB{\beta_t}{t, I,[x-K,x]}  }{f(t,x)}\right|>\frac{\delta}{4}\right)\leq \Prob\left(\left|\frac{\NOB{\beta_t}{t, I,x} }{f(t,x)}\right|>\frac{\delta}{4}\right),
\end{equation}
thus the second term on the RHS converges to 0 by Proposition \ref{pathlocfin}.
The third term on the RHS converges to 0 by Proposition \ref{espcondr}.
Finally, by Markov's inequality and Lemma \ref{lemmaL2bound},
\begin{equation}
\begin{split}
    \Prob&\left(\left(\frac{N^{\beta_t}(t, I,[x-K,x]) - \E\left[N^{\beta_t}(t, I,[x-K,x])\mid \mathcal{F}_{R} \right]}{f(t,x)}\right)^2>\left(\frac{\delta}{4}\right)^2 \right)
    \\& \leq \frac{\mathbb{E}\left[\left(N^{\beta_t}(t, I,[x-K,x]) - \E\left[N^{\beta_t}(t, I,[x-K,x])\mid \mathcal{F}_{R} \right]\right)^2\right]}{(4^{-1}\delta f(t,x))^2}
    \\& \leq \frac{\E\left[\#\left\{(u,v) \in \mathcal{N}^{\beta_t}(t,I,[x-K,x])^2 : Q_t(u,v) >R \right\}\right]}{(4^{-1}\delta f(t,x))^2}.
\end{split}
\end{equation}
Thus, by Proposition \ref{L2boundlocancestor}, the last term on the RHS converges to 0.

\end{proof}

Having established a bound on the number of pairs of localized particles that do not split early, we derive the following intermediate result: two particles chosen uniformly at random from $\mathcal{N}(t,x_t)$ must have branched early. This translates to the tightness in probability of the random measures $\mu_t$, which will serve as a key ingredient for Theorem~\ref{thm:ut}.

\begin{proposition}\label{th:genealogy}
    Let \( x_t \) and \( R_t \) be such that \( x_t \to \infty \), \( x_t = o(t) \), and \( R_t \to \infty \) as \( t \to \infty \). Then for any \( \delta > 0 \),
    \begin{equation}
        \lim_{t \to \infty} \mathbb{P} \left( \mu_t([R_t, \infty)) \left( \frac{N(t, x_t)}{f(t,x_t)} \right)^2 > \delta \right) = 0.
    \end{equation}

    As a consequence of Theorem~\ref{th:1}, this implies that the family of random measures $(\mu_t)_{t \ge 0}$ is tight in probability; that is, for any $\epsilon > 0$,
    \begin{equation}
        \lim_{R \to \infty} \sup_{t\geq 0} \mathbb{P} \left( \mu_t([R, \infty)) > \epsilon \right) = 0.
    \end{equation}
\end{proposition}

\begin{proof}[Proof of Proposition \ref{th:genealogy}]

In this proof, we say that a particle $u\in \mathcal{N}_t$ is \emph{localised} (and write $u \text{ loc}$) during $[r, t ]$ if and only if
\begin{equation}
    X_u(s) \leq s\frac{m_t}{t} \quad \forall s \in [r,t].
\end{equation}
Otherwise, we say that it is \emph{not localised}.

As in the statement of the theorem, let $x=x_t$ and $R_t$  be such that, as $t\to \infty$, $x_t=o(t)$, $x_t\rightarrow \infty$ and $R_t\rightarrow \infty$.
Here, we choose $r=r_t$ and $K=K_t$, diverging to $\infty$ with $t$, satisfying $3r_t\leq t, \, r^{\lambda}\leq x_t \text{ for a } \lambda>\frac{1}{2}, r_t=o(R_t)$, and $K_t=o(r_t)$.  

We aim to control the quantity
\begin{equation}
     \mu_t([R_t, \infty)) \left( \frac{N(t, x_t)}{f(t,x_t)} \right)^2 = \frac{\#\big\{(u,v) \in \mathcal{N}(t,x)^2 : Q_t(u,v) \ge R_t \big\}}{f(t,x)^2}.
\end{equation}

We begin by localising particles after time \( r \). Observe that:
\begin{equation}
\begin{split}
    \#\big\{(u,v) \in \mathcal{N}(t,x)^2 :& \ u \text{ or } v \text{ not loc on } [r,t] \big\}\\& \leq 2\#\left\{(u,v) \in \mathcal{N}(t,x)^2 : u \text{ not loc on } [r,t] \right\}
    \\& =2N(t,x)\#\left\{u \in \mathcal{N}(t,x) : u \text{ not loc on } [r,t]\right\}.
\end{split}
\end{equation}
Therefore by Theorem \ref{th:1} and Proposition \ref{pathlocfin} (with $\alpha=0$), for any $\delta>0$,
\begin{equation}
    \Prob\left(\frac{\#\left\{(u,v) \in \mathcal{N}(t,x)^2 : u \text{ or } v \text{ not loc on } [r,t] \right\}}{f(t,x)^2}>\delta \right)\xrightarrow[t\rightarrow \infty]{} 0.
\end{equation}
Similarly, by Proposition~\ref{prop:Kdist},
\begin{equation}
\Prob\left(
\frac{
\#\Big(
\mathcal{N}(t,x-K)\times \mathcal{N}(t,x)
\;\cup\;
\mathcal{N}(t,x)\times \mathcal{N}(t,x-K)
\Big)
}{f(t,x)^2}
> \delta
\right)
\xrightarrow[t\to\infty]{} 0 .
\end{equation}
Moreover, by Markov's inequality and Proposition \ref{L2boundlocancestor},
\begin{equation}
\begin{split}
    \Prob&\left(\frac{\#\left\{(u,v) \in \mathcal{N}(t,[x-K,x])^2 : u \text{ and } v \text{ loc on } [r,t] \text{ and } Q_t(u,v) \geq R \right\}}{f(t,x)^2}>\delta \right)
    \\&\leq \frac{\E\left[\#\left\{(u,v) \in \mathcal{N}^{\beta_t}(t,I,[x-K,x])^2 : Q_t(u,v) \geq R \right\}\right]}{\delta f(t,x)^2}\xrightarrow[t\rightarrow \infty]{}0.
\end{split}
\end{equation}
Thus proving Proposition \ref{th:genealogy}.
\end{proof}

To convert this tightness into the weak convergence stated in Theorem~\ref{thm:ut}, we rely on the following general criterion for random probability measures.

\begin{lemma}[Characterization of Weak Convergence]\label{lem:characterization}
    Let $(\nu_t)_{t \geq 0}$ be a family of random probability measures on $\mathbb{R}_+$. Suppose the following two conditions are satisfied:
    \begin{enumerate}
        \item[(i)] For every $r \geq 0$, there exists a random variable $F_r$ such that
        \begin{equation}
            \nu_t([r, \infty)) \xrightarrow[t \to \infty]{\mathbb{P}} F_r.
        \end{equation}
        \item[(ii)] The family is tight in probability; that is, for every $\epsilon > 0$,
        \begin{equation}
            \lim_{M \to \infty} \limsup_{t \to \infty} \mathbb{P}\left( \nu_t([M, \infty)) > \epsilon \right) = 0.
        \end{equation}
    \end{enumerate}
    Then, there exists a random probability measure $\nu$ such that $\nu_t$ converges weakly in probability to $\nu$ as $t \to \infty$. The limit $\nu$ is uniquely characterized by the property that, almost surely, $\nu([r, \infty)) = F_r$ for every continuity point $r$ of the mapping $x \mapsto \nu([x, \infty))$.
\end{lemma}

The proof of Lemma \ref{lem:characterization} relies on a standard diagonal extraction argument adapted from the proof of Helly's Selection Theorem (see, e.g., \cite[Theorem 25.9]{Billingsley}).

With this lemma and Proposition \ref{th:genealogy} in hand, the proof of Theorem \ref{thm:ut} follows naturally.

\begin{proof}[Proof of Theorem \ref{thm:ut}]
    By Proposition \ref{th:genealogy}, the family of random measures $(\mu_t)_{t \geq 0}$ is tight in probability, which immediately satisfies condition (ii) of Lemma \ref{lem:characterization}. To establish weak convergence, it remains to verify condition (i).

    Fix $r \geq 0$. The event $\{Q_t(u,v) \geq r\}$ implies that $u$ and $v$ descend from a common ancestor $w \in \mathcal{N}_r$ alive at time $r$. Partitioning the pairs in $\mathcal{N}(t,x_t)$ by their ancestor at time $r$ yields
    \begin{equation}
        \mu_t([r, \infty)) = \sum_{w \in \mathcal{N}_r} \left( \frac{N_w(t, x_t)}{N(t, x_t)} \right)^2,
    \end{equation}
    where $N_w(t, x_t) \coloneqq \# \{ v \in \mathcal{N}(t, x_t) : w \preceq v \}$. 

    By Theorem \ref{th:1},
    \begin{equation}
        \frac{N(t, x_t)}{f(t, x_t)} \xrightarrow[t \to \infty]{\mathbb{P}} Z_\infty.
    \end{equation}
    To evaluate $N_w(t, x_t)$, we apply Theorem \ref{th:1} conditionally on $\mathcal{F}_r$ to the shifted subtree rooted at $w$. The target spatial boundary $m_t - x_t$ for these descendants, when viewed from the particle's position $X_w(r)$ and over the remaining time $t-r$, corresponds to a relative target of $m_{t-r} - (x_t + X_w(r) - \sqrt{2}r) + o(1)$. This yields 
    \begin{equation}
        \frac{N_w(t, x_t)}{f\big(t-r, x_t + X_w(r) - \sqrt{2}r\big)} \xrightarrow[t \to \infty]{\mathbb{P}} Z_\infty^{(w)}.
    \end{equation}

    Taking the ratio, the polynomial and Gaussian terms in $f$ cancel asymptotically as $t \to \infty$, leading to
    \begin{equation}
        \frac{N_w(t, x_t)}{N(t, x_t)} \xrightarrow[t \to \infty]{\mathbb{P}} \frac{Z_\infty^{(w)}}{Z_\infty} e^{\sqrt{2}X_w(r)-2r}.
    \end{equation}
    Since the population $\mathcal{N}_r$ is almost surely finite, the finite sum converges in probability:
    \begin{equation}
        \mu_t([r, \infty)) \xrightarrow[t \to \infty]{\mathbb{P}} F_r \coloneqq \frac{1}{Z_\infty^2} \sum_{w \in \mathcal{N}_r} \left( e^{\sqrt{2}X_w(r)-2r} Z_\infty^{(w)} \right)^2.
    \end{equation}
    This establishes condition (i) of Lemma \ref{lem:characterization}. 
    
    To conclude the proof, observe that the process $r \mapsto F_r$ is piecewise constant, with jumps occurring exclusively at the branching times of the BBM. Since the branching times are continuous random variables, the probability of a jump occurring exactly at a fixed deterministic $r$ is zero. Therefore, $\mathbb{P}(\mu_\infty(\{r\}) > 0) = 0$ for all $r \ge 0$, implying that every $r$ is almost surely a continuity point of the limit measure $\mu_\infty$. By Lemma \ref{lem:characterization}, $(\mu_t)_{t \ge 0}$ converges weakly in probability to $\mu_\infty$, uniquely characterized by $\mu_\infty([r, \infty)) = F_r$ almost surely.
\end{proof}

We now prove Theorem \ref{thm1:2}.
\begin{proof}[Proof of Theorem \ref{thm1:2}]
Roberts \cite{Roberts_2013} established the following result for the position $M_t$ of the rightmost particle of the BBM 
\begin{equation}
    \limsup_{t\to \infty}\frac{M_t - \sqrt{2}t}{\log(t)}=\frac{1}{2\sqrt{2}}.
\end{equation}
The analogous result for the branching random walk was previously established by Hu and Shi \cite{hushi}. Moreover, in \cite{Hu2012TheAS}, Hu derived integral tests describing the almost sure lower limits for the extremal particle position in the branching random walk; the same arguments apply to BBM. In particular, he established that 

\begin{equation}\label{eq:Husup}
    \limsup_{t\to \infty} M_t -\sqrt{2}t -\tfrac{\log(t)}{2\sqrt{2}}= +\infty.
\end{equation}

Let $x_t$ be such that $x_t\leq t^{1/3}$. We will show that, for any $K$, there exists $\epsilon>0$, a sequence of stopping times $(t_n)_{n\in \mathbb{N}}$ and $(T_n)_{n\in \mathbb{N}}$ satisfying $t_n\leq T_n\leq t_{n+1}$ for all $n\in \mathbb{N}$,  such that for all sufficiently large $n$,
\begin{equation}
    P\left(N(T_n,x_{T_n})\geq Kf(T_n,x_{T_n}) \mid \mathcal{F}_{t_n}\right) >\epsilon.
\end{equation}
It follows from the conditional version of the Borel–Cantelli lemma that
\begin{equation}
    \limsup_{n\to \infty}\frac{N(T_n,x_{T_n})}{f(T_n,x_{T_n})} = \infty,
\end{equation}
thereby establishing the desired result.\\

Recall that \( Z_\infty > 0 \) almost surely. Therefore, by Theorem \ref{th:1}, There exist $X>0$ and $\delta>0$ such that for all $x\geq X$ and $t\geq x^2/4$,
\begin{equation}\label{jio4}
    P\left(N(t,x)\geq \delta f(t,x) \right) >\epsilon.
\end{equation}

Let $V=\max\left((\log(e^{5/2}K)-\log(\delta))/{\sqrt{2}},X\right)$. By \eqref{eq:Husup}, there exists a sequence $t_n \to \infty$ of stopping times such that, for all $n \in \mathbb{N}$, $t_{n+1}\geq 2t_n$ and
\begin{equation}
    M_{t_n}\geq \sqrt{2}t_n-\frac{\log(t_n)}{2\sqrt{2}} + V=m_{t_n}+\frac{\log(t_n)}{\sqrt{2}} + V.
\end{equation} 
Moreover define $T_n\coloneqq t_n+t^{\frac{2}{3}}_n$, for all $n\in \mathbb{N}$.

Let \( u_t^\star \) denote the rightmost particle at time \( t \), and for \( s \in [0, t] \), define \( N_s(t, x) \) as the number of particles at time \( t \) to the right of \( m_t - x \) that are descendants of \( u_s^\star \):
\begin{equation}
    N_s(t,x) =\# \{u\in \mathcal{N}(t,x) : u^\star_s\leq u \}.
\end{equation}
Observe that $N_s(t,x)\leq N(t,x)$ and that:
\begin{equation}\label{gi49}
    m_{t^{2/3}_n} -V \geq m_{T_n} - m_{t_n} - \frac{\log(t_n)}{\sqrt{2}} -V.
\end{equation}

Therefore, for all $n$ such that $t_n^{1/3}\geq V $,
\begin{align}
    &P\left(N(T_n,x_{T_n})\geq Kf(T_n,x_{T_n}) \mid \mathcal{F}_{t_n}\right) 
    &&  \notag \\
    &\quad \geq P\left(N_{t_n}(T_n,x_{T_n})\geq Kf(T_n,x_{T_n}) \mid \mathcal{F}_{t_n} \right)
    &&  \notag \\
    &\quad \geq P\left(N(t^{\frac{2}{3}}_n,x_{T_n}+V)\geq Kf(T_n,x_{T_n}) \right)
    && \qquad \text{by \footnotesize \eqref{gi49}} \notag \\
    &\quad \geq P\left(N(t^{\frac{2}{3}}_n,x_{T_n}+V)\geq e Kf( t^{\frac{2}{3}}_n ,x_{T_n}) \right)
    && \qquad \text{\footnotesize (\( e^{x_{T_n}^2/(2t_n^{2/3}) }\leq e\))} \notag \\
    &\quad \geq P\left(N(t^{\frac{2}{3}}_n,x_{T_n}+V)\geq \delta f( t^{\frac{2}{3}}_n ,x_{T_n} +V )\right) > \epsilon. && \qquad \text{by \footnotesize \eqref{jio4}} 
\end{align}

\begin{figure}[ht]
    \centering
    \includegraphics[width=1\textwidth]{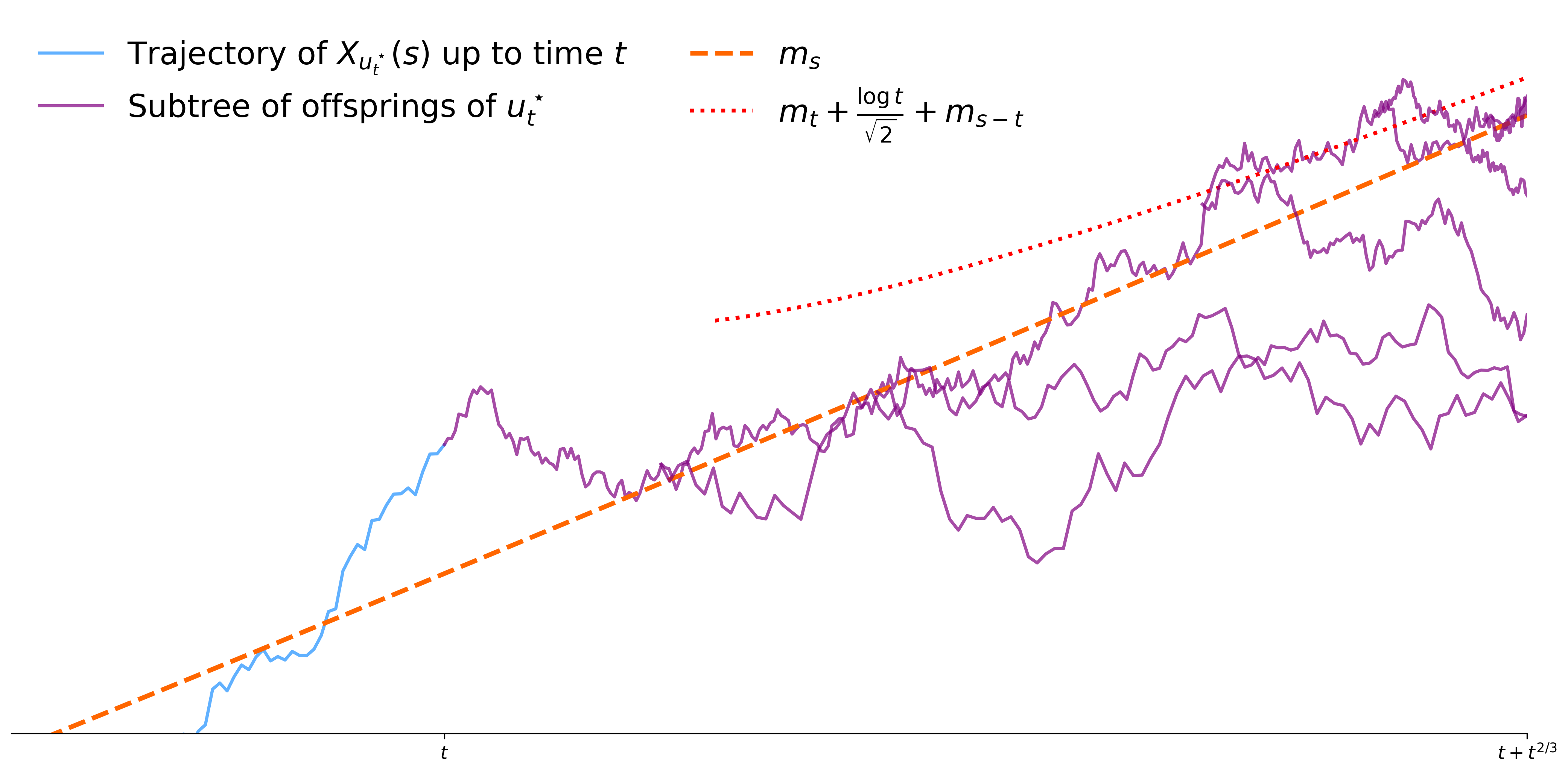}
    \caption{Illustration of a rare event—which occurs infinitely often almost surely—where \( M_t \) exceeds \( m_t + \frac{\log(t)}{\sqrt{2}} \), resulting in an inflation of \( N(s, x_s) \) for \( x_s \leq s^{1/3} \) up to time \( t + t^{2/3} \).}
    \label{fig:nocvps}
\end{figure}
    
\end{proof}

\section{Brownian estimates and toolbox}\label{sec:toolbox}

This section collects several useful results, primarily from Bramson \cite{Bramson1983ConvergenceOS} and Arguin et al. \cite{Arguin2016}.

Define the probability measure $\Prob_{0,x}^{t,y}$ under which $(B_s)_{s \in [0,t]}$ is a Brownian bridge from $x$ at time $0$ to $y$ at time $t$. For a function $l : [s_1,s_2] \to \mathbb{R}$, we denote by $B_l$ (or $B_l[s_1,s_2]$ in cases of ambiguity) the event that the path lies strictly above $l$ on the interval $[s_1, s_2]$. Similarly, $B^l$ denotes the event that the path lies strictly below $l$.

The following is a well-known expression for the probability that a Brownian bridge stays non-negative.
\begin{lemma}\label{bmbar}
\begin{equation}
    \Prob_{0,x}^{t,y}(B_s \geq 0 , s \in [0,t]) = 1 - e^{-\frac{2xy}{t}}.
\end{equation}
\end{lemma}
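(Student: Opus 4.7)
The plan is to prove this via the reflection principle, expressing the bridge probability as a ratio of unconditional Brownian probabilities. Assume throughout that $x, y \geq 0$; otherwise the bridge almost surely hits the origin and the probability is zero, while the right-hand side is non-positive (the stated identity then reduces to the trivially true conventions).

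First, I would realise the Brownian bridge by conditioning a standard Brownian motion $W_s$ starting from $x$ on its value at time $t$. Writing $p_t(a,b) = (2\pi t)^{-1/2} e^{-(b-a)^2/(2t)}$ for the Brownian transition density, the bridge law satisfies
\begin{equation}
\mathbb{P}_{0,x}^{t,y}(B_s \geq 0,\, s \in [0,t]) \;=\; \frac{\mathbb{P}_x\!\left( W_t \in \mathrm{d}y,\; \min_{s \in [0,t]} W_s \geq 0 \right)}{\mathbb{P}_x(W_t \in \mathrm{d}y)},
\end{equation}
where the ratio of infinitesimal probabilities is to be understood in the usual density sense.

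Next I would apply the reflection principle at level $0$ to the numerator of the complementary event. Paths of $W$ started at $x \geq 0$ that hit $0$ before time $t$ and end near $y$ are in bijection, via reflection at their first hitting time of $0$, with paths started at $-x$ ending near $y$. Hence
\begin{equation}
\mathbb{P}_x\!\left(W_t \in \mathrm{d}y,\; \min_{s \in [0,t]} W_s < 0 \right) \;=\; \mathbb{P}_{-x}(W_t \in \mathrm{d}y) \;=\; p_t(-x,y)\,\mathrm{d}y.
\end{equation}

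Finally, I would take the complement and simplify the ratio of Gaussian densities:
\begin{equation}
\mathbb{P}_{0,x}^{t,y}(B_s \geq 0,\, s \in [0,t]) \;=\; 1 - \frac{p_t(-x,y)}{p_t(x,y)} \;=\; 1 - \exp\!\left( -\frac{(y+x)^2 - (y-x)^2}{2t} \right) \;=\; 1 - e^{-2xy/t},
\end{equation}
using $(y+x)^2 - (y-x)^2 = 4xy$. There is no real obstacle here; the only point requiring a moment of care is the measurability step needed to turn ``infinitesimal probabilities at $y$'' into a bona fide conditional law, which is standard for the Brownian bridge (e.g.\ via the disintegration theorem or by taking an $\varepsilon$-neighbourhood of $y$ and letting $\varepsilon \to 0$).
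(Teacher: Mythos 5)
Your reflection-principle derivation is correct, and the algebra $1 - p_t(-x,y)/p_t(x,y) = 1 - e^{-2xy/t}$ is exactly the standard argument; the paper itself states this lemma as well known and gives no proof, so there is nothing to compare against. Your proposal is a perfectly valid way to fill in that omitted proof.
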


The following result provides an upper bound on the probability that a Brownian bridge stays below a linear function.
\begin{lemma}[Lemma 3.4, \cite{Arguin2016}]\label{34Arguin}
Let \( Z_1, Z_2 \geq 0 \) and \( r_1, r_2 \geq 0 \). Then, for \( t > r_1 + r_2 \),
\[
\mathbb{P}_{0,0}^{t,0} \left( B_s \leq \left(1 - \frac{s}{t} \right) Z_1 + \frac{s}{t} Z_2, \quad \forall s \in [r_1, t - r_2] \right) \leq \frac{2}{t - r_1 - r_2} \prod_{i=1}^2 \left( Z(r_i) + \sqrt{r_i} \right),
\]
where
\[
Z(r_1) \coloneqq \left(1 - \frac{r_1}{t} \right) Z_1 + \frac{r_1}{t} Z_2, \quad Z(r_2) \coloneqq \frac{r_2}{t} Z_1 + \left(1 - \frac{r_2}{t} \right) Z_2.
\]
\end{lemma}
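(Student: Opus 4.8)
The plan is to reduce the event in question to the probability that a Brownian bridge stays below~$0$, for which Lemma~\ref{bmbar} (up to reflection) gives an exact expression, and then to bound the resulting expectation by Cauchy--Schwarz together with elementary Gaussian moment estimates. Write $\ell(s) := \left(1 - \frac{s}{t}\right) Z_1 + \frac{s}{t} Z_2$ for the affine function in the statement, so that $\ell(r_1) = Z(r_1)$ and $\ell(t-r_2) = Z(r_2)$, and set $T := t - r_1 - r_2 > 0$. First I would condition on the pair $(B_{r_1}, B_{t-r_2})$. Under $\mathbb{P}_{0,0}^{t,0}$ the bridge is a Gauss--Markov process, so given $(B_{r_1}, B_{t-r_2}) = (a,b)$ the process $(B_{r_1+u})_{0 \le u \le T}$ is a Brownian bridge from $a$ to $b$ over time $T$, independent of the values of $B$ outside $[r_1, t-r_2]$. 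Since subtracting the affine function $\ell$ maps a bridge to a bridge, $u \mapsto B_{r_1+u} - \ell(r_1+u)$ is a Brownian bridge from $a - Z(r_1)$ to $b - Z(r_2)$ over time $T$, and the event $\{B_s \le \ell(s)\ \forall s \in [r_1, t-r_2]\}$ is exactly the event that this bridge remains $\le 0$. In particular it forces $a \le Z(r_1)$ and $b \le Z(r_2)$, and applying Lemma~\ref{bmbar} to $-(B_{r_1+u} - \ell(r_1+u))$ the conditional probability equals
\[
\bbone_{\{B_{r_1} \le Z(r_1)\}}\, \bbone_{\{B_{t-r_2} \le Z(r_2)\}}\left(1 - \exp\!\left(-\tfrac{2}{T}\bigl(Z(r_1)-B_{r_1}\bigr)\bigl(Z(r_2)-B_{t-r_2}\bigr)\right)\right).
\]

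Next I would use $1 - e^{-v} \le v$ for $v \ge 0$ and take expectations, which gives
\[
\mathbb{P}_{0,0}^{t,0}\!\left(B_s \le \ell(s)\ \forall s \in [r_1, t-r_2]\right) \;\le\; \frac{2}{T}\, \mathbb{E}\!\left[\bigl(Z(r_1)-B_{r_1}\bigr)^+ \bigl(Z(r_2)-B_{t-r_2}\bigr)^+\right].
\]
The two factors are correlated under the bridge law, so instead of trying to factor the expectation I would apply Cauchy--Schwarz and then bound each factor through $\bigl(Z(r_i)-x\bigr)^+ \le Z(r_i) + |x|$. Under $\mathbb{P}_{0,0}^{t,0}$ one has $B_{r_1} \sim \mathcal{N}\!\left(0, \tfrac{r_1(t-r_1)}{t}\right)$ and $B_{t-r_2} \sim \mathcal{N}\!\left(0, \tfrac{r_2(t-r_2)}{t}\right)$, so the variances are at most $r_1$ and $r_2$ and $\mathbb{E}|B_{r_1}| \le \sqrt{r_1}$, $\mathbb{E}|B_{t-r_2}| \le \sqrt{r_2}$; expanding the square gives $\mathbb{E}\bigl[(Z(r_i) + |B|)^2\bigr] \le \bigl(Z(r_i) + \sqrt{r_i}\bigr)^2$, and multiplying the two bounds yields the claimed $\frac{2}{t - r_1 - r_2}\prod_{i=1}^2 \bigl(Z(r_i) + \sqrt{r_i}\bigr)$.

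The only real subtlety is the dependence between $B_{r_1}$ and $B_{t-r_2}$: these are jointly Gaussian but not independent, so one cannot split the expectation directly, and Cauchy--Schwarz is what sidesteps this cleanly, at the harmless cost of replacing each second moment by the square of a sum. Beyond that the argument is bookkeeping: checking that Lemma~\ref{bmbar} is being applied to the reflected, affinely shifted bridge (valid since affine shifts preserve the bridge property), and that the constraints $a \le Z(r_1)$, $b \le Z(r_2)$ are automatically encoded in the indicator functions, so that $Z(r_i)-B$ may legitimately be replaced by $(Z(r_i)-B)^+$ before taking expectations.
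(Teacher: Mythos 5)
The paper cites Lemma~3.4 from Arguin--Bovier--Kistler \cite{Arguin2016} without reproducing a proof, so there is no in-paper argument to compare against; what matters is whether your self-contained proof is sound, and it is. Conditioning on $(B_{r_1},B_{t-r_2})$ and using the Markov property of the bridge to reduce to a bridge of length $T=t-r_1-r_2$ is correct, the affine recentring legitimately maps a bridge to a bridge, and applying Lemma~\ref{bmbar} to the reflected recentred bridge (together with the observation that the constraint forces $B_{r_1}\le Z(r_1)$, $B_{t-r_2}\le Z(r_2)$, which justifies inserting the positive parts) gives exactly $1-\exp(-\tfrac{2}{T}(Z(r_1)-B_{r_1})^+(Z(r_2)-B_{t-r_2})^+)$. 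The inequality $1-e^{-v}\le v$, Cauchy--Schwarz, and the elementary bounds $\mathrm{Var}(B_{r_1})=\tfrac{r_1(t-r_1)}{t}\le r_1$, $\mathbb{E}|B_{r_1}|\le\sqrt{r_1}$ (and likewise at $t-r_2$) then yield $\mathbb{E}\bigl[(Z(r_i)+|B|)^2\bigr]\le (Z(r_i)+\sqrt{r_i})^2$ and hence the claimed bound with constant $2$. Using Cauchy--Schwarz to sidestep the positive correlation between $B_{r_1}$ and $B_{t-r_2}$ is a clean choice: the positive correlation actually works against a naive factorisation of $\mathbb{E}[XY]$ into $\mathbb{E}X\,\mathbb{E}Y$ (decreasing functions of positively associated Gaussians are positively correlated), so some such device is genuinely needed, and Cauchy--Schwarz loses nothing here because the target bound is already a product of the two endpoint contributions.
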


The following identity, derived from the reflection principle, gives the probability that a Brownian bridge remains non-negative over an interval.

\begin{lemma}\label{reflectionprinciple0}
For $y>0$ and $0<r<\gamma$,
\begin{equation}
    \Prob(\forall s \in [r, \gamma] \, , \, B_s \geq 0 \mid B_\gamma = y ) = 1- 2\Prob(B_r \leq 0 \mid B_\gamma = y ).
\end{equation}
\end{lemma}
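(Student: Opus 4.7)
The plan is to prove this identity via the reflection principle, by decomposing the complementary event based on the sign of $B_r$ and showing that each resulting contribution equals $\mathbb{P}(B_r \leq 0 \mid B_\gamma = y)$.

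\textbf{Step 1: Decompose the complement.} Writing the complement event as
\[
\{\exists s \in [r,\gamma], B_s \leq 0\} = \{B_r \leq 0\} \,\sqcup\, \{B_r > 0,\ \exists s \in (r,\gamma], B_s \leq 0\},
\]
it suffices to show that
\[
\mathbb{P}(B_r > 0,\ \exists s \in (r,\gamma], B_s \leq 0 \mid B_\gamma = y) \;=\; \mathbb{P}(B_r \leq 0 \mid B_\gamma = y),
\]
from which the claimed formula follows by writing $1 = \mathbb{P}(B_r \leq 0 \mid B_\gamma = y) + \mathbb{P}(B_r > 0 \mid B_\gamma = y)$.

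\textbf{Step 2: Reflection identity on joint densities.} To handle the conditioning on the measure-zero event $\{B_\gamma = y\}$ cleanly, I would work directly with the joint density of $(B_r, B_\gamma)$. Starting from the standard reflection-principle identity, valid for $x,y > 0$,
\[
\mathbb{P}\bigl(\exists s \in (r,\gamma], B_s \leq 0,\ B_\gamma \in dy \mid B_r = x\bigr) \;=\; \frac{1}{\sqrt{2\pi(\gamma-r)}} e^{-(x+y)^2/(2(\gamma-r))}\, dy,
\]
(which is exactly the density at $-y$ of a Brownian motion started at $x$, obtained by reflecting the post-hitting portion of the path), multiply by $\mathbb{P}(B_r \in dx) = \frac{1}{\sqrt{2\pi r}} e^{-x^2/(2r)}\, dx$ and integrate $x$ over $(0,\infty)$. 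The change of variables $x \mapsto -x$ then maps this integral onto
\[
\int_{-\infty}^{0} \frac{1}{\sqrt{2\pi r}}e^{-x^2/(2r)}\,\frac{1}{\sqrt{2\pi(\gamma-r)}}e^{-(y-x)^2/(2(\gamma-r))}\, dx \;=\; \mathbb{P}(B_r \leq 0,\ B_\gamma \in dy).
\]
Dividing both sides by $\mathbb{P}(B_\gamma \in dy)$ yields the identity claimed in Step~1.

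\textbf{Step 3: Combine.} Adding the two contributions gives
\[
\mathbb{P}(\exists s \in [r,\gamma],\ B_s \leq 0 \mid B_\gamma = y) = 2\,\mathbb{P}(B_r \leq 0 \mid B_\gamma = y),
\]
and passing to the complement concludes the proof.

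The only real subtlety is the rigorous handling of the conditioning on $\{B_\gamma = y\}$; this is resolved by carrying everything out at the level of joint densities as above, rather than invoking the strong Markov property directly on a conditioned measure. Alternatively, one can derive the result analytically: use Lemma~\ref{bmbar} on $[r,\gamma]$ conditional on $(B_r, B_\gamma) = (x,y)$ with $x>0$, giving $1 - e^{-2xy/(\gamma-r)}$, and verify by a direct Gaussian completion-of-squares (noting that $B_r \mid B_\gamma = y$ is Gaussian with mean $ry/\gamma$ and variance $r(\gamma-r)/\gamma$) that
\[
\int_0^\infty \mathbb{P}(B_r \in dx \mid B_\gamma = y)\, e^{-2xy/(\gamma-r)} = \mathbb{P}(B_r \leq 0 \mid B_\gamma = y),
\]
which is equivalent. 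Either route is short; I would present the reflection argument as it is more conceptual and mirrors how Lemma~\ref{bmbar} itself is proved.
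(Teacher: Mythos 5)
Your proof is correct and follows essentially the same route as the paper: decompose the complementary event according to the sign of $B_r$ and use the reflection principle to identify the contribution from $\{B_r>0\}$ with $\P(B_r\le 0\mid B_\gamma=y)$. The paper applies the reflection directly to the bridge measure, writing $\P_{0,0}^{\gamma,y}(B_r>0\,\cap\,\exists s\in[r,\gamma], B_s<0)=\P_{0,0}^{\gamma,-y}(B_r>0)=\P_{0,0}^{\gamma,y}(B_r<0)$, while you carry out the identical step at the level of joint densities; the two are equivalent.
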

\begin{proof}
    We prove that 
    \begin{equation}
        \Prob(\exists s \in [r, \gamma] \, , \, B_s < 0 \mid B_\gamma = y ) =2\Prob_{0,0}^{\gamma,y}(B_r<0).
    \end{equation}
    Indeed,
    \begin{equation}
        \Prob_{0,0}^{\gamma,y}(\exists s \in [r, \gamma] \, , \, B_s < 0)= \Prob_{0,0}^{\gamma,y}(B_r<0) + \Prob_{0,0}^{\gamma,y}(B_r>0 \, \cap \, \exists s \in [r, \gamma] \, , \, B_s < 0 ).
    \end{equation}
    By the reflection principle,
    \begin{equation}
        \Prob_{0,0}^{\gamma,y}(B_r>0 \, \cap \, \exists s \in [r, \gamma] \, , \, B_s < 0 )=\Prob_{0,0}^{\gamma,-y}(B_r>0 )=\Prob_{0,0}^{\gamma,y}(B_r<0 ).
    \end{equation}
    Hence the result.
\end{proof}

We now state a useful monotonicity result.
\begin{lemma}[Lemma 2.6, \cite{Bramson1983ConvergenceOS}]\label{monoticitypath}
    Assume that $l_1(s)$, $l_2(s)$, and $\wedge(s)$ satisfy $l_1(s)\leq l_2(s) \leq \wedge(s)$ for $s\in [0,t]$, and that $\Prob(B_{l_2}[0,t])>0$. Then,
    \begin{equation}
        \Prob_{0,0}^{t,0}(B^{\wedge}\mid B_{l_1})\geq \Prob_{0,0}^{t,0}(B^{\wedge}\mid B_{l_2}),
    \end{equation}
    and
    \begin{equation}
        \Prob_{0,0}^{t,0}(B_{\wedge}\mid B_{l_1})\leq \Prob_{0,0}^{t,0}(B_{\wedge}\mid B_{l_2}).
    \end{equation}
\end{lemma}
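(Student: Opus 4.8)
The plan is to derive both inequalities from one pathwise stochastic–domination statement. Write $\mu_i$ for the law of $(B_s)_{s\in[0,t]}$ under $\P_{0,0}^{t,0}(\,\cdot\mid B_{l_i}[0,t])$; these are well defined because $l_1\le l_2$ forces $B_{l_2}[0,t]\subseteq B_{l_1}[0,t]$, hence $\P(B_{l_1}[0,t])\ge\P(B_{l_2}[0,t])>0$. I claim $\mu_1\preceq\mu_2$ in the stochastic order on $C[0,t]$ induced by the pointwise partial order, i.e. $\int g\,d\mu_1\le\int g\,d\mu_2$ for every bounded nondecreasing measurable $g$. Granting this, both assertions follow immediately: the event $B^{\wedge}$ of paths strictly below $\wedge$ is a down-set, so $\mathbf{1}_{B^{\wedge}}$ is nonincreasing and $\mu_1(B^{\wedge})\ge\mu_2(B^{\wedge})$; while $B_{\wedge}$ is an up-set, so $\mathbf{1}_{B_{\wedge}}$ is nondecreasing and $\mu_1(B_{\wedge})\le\mu_2(B_{\wedge})$.

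To prove $\mu_1\preceq\mu_2$ I first establish a finite-dimensional version. Fix ordered times $0<t_1<\dots<t_n<t$, and let $\varphi$ be the Gaussian density on $\mathbb{R}^n$ of $(B_{t_1},\dots,B_{t_n})$ under the unconditioned bridge. Conditioning on $\{B_{t_k}>l_i(t_k),\ k\le n\}$ produces the law with density proportional to $\varphi\,\mathbf{1}_{A_i}$, where $A_i=\prod_{k=1}^n(l_i(t_k),\infty)$. By Holley's inequality (the FKG-type lattice inequality for measures on $\mathbb{R}^n$ with log-supermodular densities), the resulting laws satisfy $\nu^{(n)}_1\preceq\nu^{(n)}_2$ provided $(\varphi\mathbf{1}_{A_2})(x\vee y)\,(\varphi\mathbf{1}_{A_1})(x\wedge y)\ge(\varphi\mathbf{1}_{A_1})(x)\,(\varphi\mathbf{1}_{A_2})(y)$ for all $x,y\in\mathbb{R}^n$. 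The indicator part holds because $l_1\le l_2$ and the $A_i$ are products of half-lines: if $x\in A_1$ and $y\in A_2$, then $x\wedge y\in A_1$ (since $x_k>l_1(t_k)$ and $y_k>l_2(t_k)\ge l_1(t_k)$) and $x\vee y\in A_2$ (since $y_k>l_2(t_k)$). The Gaussian part, $\varphi(x\vee y)\varphi(x\wedge y)\ge\varphi(x)\varphi(y)$, is the $\mathrm{MTP}_2$ property of $\varphi$: the bridge sampled at the ordered times is Gauss--Markov, so its precision matrix is tridiagonal with negative off-diagonal entries (immediate from the independent-increments representation of Brownian motion, conditioned at $t$), and for a Gaussian density that sign condition is precisely $\mathrm{MTP}_2$.

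Finally I pass to the limit. Fix a countable dense $D\subset(0,t)$ and finite sets $D_n\uparrow D$. For a bounded nondecreasing $g$ depending on finitely many coordinates (enlarging $D$ so that these coordinates lie in $D$), the previous step applied to the coordinates in $D_n$ yields $\int g\,d\tilde\mu^{(n)}_1\le\int g\,d\tilde\mu^{(n)}_2$, where $\tilde\mu^{(n)}_i$ is the bridge conditioned on $\{B_s>l_i(s):s\in D_n\}$. These cylinder events decrease, as $n\to\infty$, to $\{B_s\ge l_i(s)\ \forall s\in[0,t]\}$ by continuity of paths and of $l_i$, an event that coincides with $B_{l_i}[0,t]$ up to a $\P$-null set (the bridge a.s. does not touch $l_i$ without crossing it), while the normalising constants stay bounded below by $\P(B_{l_2}[0,t])>0$; hence $\int g\,d\tilde\mu^{(n)}_i\to\int g\,d\mu_i$ and $\int g\,d\mu_1\le\int g\,d\mu_2$. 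A standard approximation of bounded nondecreasing functionals on $C[0,t]$ by cylinder ones (equivalently, of open up-sets by cylinder up-sets) then upgrades this to arbitrary bounded nondecreasing $g$, giving $\mu_1\preceq\mu_2$. The step I expect to be most delicate is this passage from the finite-dimensional ordering to the path-space statement --- keeping the conditional laws non-degenerate in the limit and identifying the limiting barrier event with $B_{l_i}[0,t]$ (the strict-versus-closed distinction) --- whereas the finite-dimensional $\mathrm{MTP}_2$/Holley computation is routine.
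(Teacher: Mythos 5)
The paper does not prove this statement; it cites it directly from Bramson's 1983 Memoir (Lemma 2.6 there). Bramson's original argument is a discrete-time approximation combined with a monotone Markov-chain coupling of the two conditional random-walk bridges, whereas you route the argument through Holley's inequality and the $\mathrm{MTP}_2$ property of the bridge's finite-dimensional Gaussian density. These two tools are essentially equivalent (a monotone coupling of Markov chains and the Holley/FKG lattice condition for log-supermodular densities both encode the same correlation structure), so the substance of your proof matches Bramson's in spirit; your version is cleaner in the finite-dimensional step at the cost of the measure-theoretic bookkeeping in the passage to the path space, which you correctly identify as the delicate point (null-set identification of $\{B_s \geq l_i(s)\ \forall s\}$ with $B_{l_i}$, and approximation of increasing functionals by cylinder ones). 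Both of those require some regularity of $l_1,l_2$ (continuity, or at least the ``no touching without crossing'' property), which is implicit in Bramson's setting but should be flagged.

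Two small remarks that would streamline your write-up. First, the second inequality is in fact immediate from the hypothesis $l_1\leq l_2\leq \wedge$ without any stochastic domination: since $B_\wedge\subseteq B_{l_2}\subseteq B_{l_1}$, one has
\begin{equation*}
\P_{0,0}^{t,0}(B_\wedge\mid B_{l_i})=\frac{\P_{0,0}^{t,0}(B_\wedge)}{\P_{0,0}^{t,0}(B_{l_i})},
\end{equation*}
and $\P_{0,0}^{t,0}(B_{l_1})\geq \P_{0,0}^{t,0}(B_{l_2})$ gives the claim. Only the first inequality genuinely requires the ordering $\mu_1\preceq\mu_2$. Second, your $\mathrm{MTP}_2$ check is correct but you can make it completely explicit: with $t_0=0$, $t_{n+1}=t$, the log-density of the bridge at $(t_1,\dots,t_n)$ is, up to affine terms in single coordinates, $-\tfrac12\sum_{k=0}^{n}(x_{k+1}-x_k)^2/(t_{k+1}-t_k)$ with $x_0=x_{n+1}=0$, so the mixed second partials $\partial^2/\partial x_k\partial x_{k+1}$ equal $1/(t_{k+1}-t_k)>0$ and all non-adjacent mixed partials vanish, giving supermodularity directly. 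With those two points addressed, the proof is correct.
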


The following result establishes that if a Brownian bridge remains above \(-\Gamma_{t,\alpha}\) for some \(\alpha \in (0,1/2)\) on the interval \([r, t - r]\), then it is remains above \(\Gamma_{t,\alpha}\) on the same interval.
\begin{lemma}[Proposition 6.1, \cite{Bramson1983ConvergenceOS}]\label{prop61bramson}
    Let $C>0$ and for all $t$, $L_t : [0,t]\mapsto \mathbb{R}$ be a family of functions.

Let any $\alpha \in ]0,1/2[$, if there exists $r_0$ such that $L_t(s) -C\Gamma_{t,\alpha}(s)) \leq 0$ for $s \in [r_0,t-r_0]$ for all $t$, then,
\begin{equation}
        \lim_{r\to \infty}\sup_{t: t\geq 3r}\frac{\Prob_{0,0}^{t,0}( \forall s \in [r,t-r] : B_s > L_t(s) +C\Gamma_{t,\alpha}(s))}{\Prob_{0,0}^{t,0}( \forall s \in [r,t-r] : B_s > L_t(s) -C\Gamma_{t,\alpha}(s))} =0.
    \end{equation}
\end{lemma}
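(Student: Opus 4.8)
The plan is to reduce the statement to a one-sided comparison at the two endpoints $r$ and $t-r$ of the interval, exploiting the strong Markov property of the Brownian bridge together with the monotonicity principle (Lemma \ref{monoticitypath}). First I would split the time interval $[r,t-r]$ into three pieces: $[r, 2r]$, $[2r, t-2r]$, and $[t-2r, t-r]$, chosen so that on the middle block the curve $C\Gamma_{t,\alpha}(s)$ is large (of order $r^\alpha$) while the two end blocks have length $r$. By conditioning on the positions $B_{2r}$ and $B_{t-2r}$ and using the Markov property, the ratio in the statement factorises (up to controlling the conditional laws of those two positions) into a bulk contribution on $[2r,t-2r]$ and two boundary contributions on the end blocks. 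The hypothesis $L_t(s) \le C\Gamma_{t,\alpha}(s)$ on $[r_0,t-r_0]$ ensures that on the bulk block the event $\{B_s > L_t(s)+C\Gamma_{t,\alpha}(s)\}$ forces $B_s > 2C\Gamma_{t,\alpha}(s) \geq 2C r^\alpha$ near the endpoints of the block, so the bulk ratio is controlled by the probability that a bridge pinned at moderate heights stays far above a slowly-growing curve.

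The heart of the argument is the following estimate: for a Brownian bridge on an interval of length $\ell$ with endpoints at heights $a,b \ge 0$, the probability of staying above a fixed level $h$ throughout, divided by the probability of staying above $-h$ throughout, is at most $C' (1 + h/\sqrt{\ell})(\cdots)$ — more precisely, raising the barrier by $2C\Gamma_{t,\alpha}$ near a time at distance $\sim r$ from the pinned endpoint costs a factor that vanishes as $r \to \infty$, uniformly in $t \ge 3r$. This is exactly where $\alpha < 1/2$ enters: the barrier height $r^\alpha$ near the endpoint is small compared to the natural fluctuation scale $\sqrt{r}$ of the bridge over a block of length $r$, yet (after iterating/rescaling) large enough that the probabilistic cost accumulates. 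Concretely I would use the Brownian-bridge staying-positive formula (Lemma \ref{bmbar}) and the reflection identity (Lemma \ref{reflectionprinciple0}) to get explicit expressions, then compare the two barrier heights by a change of variables, bounding the ratio by something like $\exp(-c\, r^{2\alpha - 1} \cdot r) = \exp(-c r^{2\alpha})$ or, more carefully, by a ratio of the form $\frac{1 - e^{-c_1 r^{\alpha}/\sqrt r \cdot(\cdots)}}{1 - e^{-c_2/\sqrt{r}\cdot(\cdots)}}$, which tends to $0$.

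The key steps, in order: (i) introduce the two cut points and apply the Markov property to reduce to three independent bridge segments; (ii) on the two length-$r$ boundary segments, show via Lemma \ref{bmbar} and Lemma \ref{reflectionprinciple0} that replacing the barrier $L_t - C\Gamma_{t,\alpha}$ by $L_t + C\Gamma_{t,\alpha}$ multiplies the staying-above probability by a factor $o_r(1)$ uniformly in $t \ge 3r$ and in the (nonnegative) pinned endpoint heights, using $L_t \le C\Gamma_{t,\alpha}$ so that the upper barrier near the endpoint is genuinely positive; (iii) on the middle segment use Lemma \ref{monoticitypath} to bound the conditional probability of staying above the higher barrier by the unconditional bridge probability, which is bounded above by $1$, so the middle segment contributes no more than a constant; (iv) combine and let $r \to \infty$. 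The main obstacle I anticipate is step (ii): obtaining the $o_r(1)$ bound \emph{uniformly} over all $t \ge 3r$ and over the random heights of the pinned endpoints (which themselves depend on conditioning). I would handle the uniformity in the endpoint heights by a monotonicity/worst-case argument (the ratio is maximised when the endpoints are as high as possible, but on the relevant event the endpoint at distance $r$ into the bulk cannot be too high without contradicting the barrier, giving an a priori bound), and the uniformity in $t$ by noting that the boundary-block estimate depends on $t$ only through the pinning law, which converges as $t\to\infty$ to a non-degenerate limit (a Brownian meander–type law) for which the estimate holds.
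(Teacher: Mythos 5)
The paper does not prove this lemma; it is cited directly from Bramson's 1983 memoir (his Proposition~6.1), so there is no in-paper proof to compare against. More importantly, the statement as printed in the paper appears to carry a misprint: the limit of the ratio should be $1$, not $0$ — this is Bramson's entropic-repulsion estimate. Both places the lemma is actually invoked in the proof of Proposition~\ref{pathlocup_proba} require the ratio to tend to $1$: the quantity $\P_{0,0}^{t,z}\bigl(\forall s\in I_r,\ B_s\geq \Gamma_{t,\gamma}\mid\forall s\in I_r,\ B_s\geq-\Gamma_{t,\gamma}\bigr)^{-1}$ is bounded by a constant close to $1$ (for $r$ large), and $1-\P_{0,0}^{t,-K}\bigl(\forall s\in I_R,\ B_s\geq\Gamma_{t,\alpha}\mid\forall s\in I_R,\ B_s\geq-\Gamma_{t,\alpha}\bigr)$ is declared to be $o_R(1)$ by this very lemma. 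Neither step would hold if the ratio tended to $0$.

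Your proposal takes the printed $=0$ literally and therefore aims at a false statement, and the reasoning runs in the wrong direction. You correctly note that for $\alpha<1/2$ the barrier shift $r^\alpha$ near the endpoints of the bulk block is \emph{small} compared to the bridge fluctuation scale $\sqrt{r}$; but this makes raising the barrier asymptotically \emph{cheap}, which is precisely why the ratio tends to $1$, not $0$. Your own closing estimates betray the sign error: expanding the ratio $\bigl(1-e^{-c_1 r^\alpha/\sqrt r\,(\cdots)}\bigr)/\bigl(1-e^{-c_2/\sqrt r\,(\cdots)}\bigr)$ for small exponents gives something of order $c_1 r^\alpha/c_2\to\infty$, not $0$; and $\exp\bigl(-c\,r^{2\alpha-1}\cdot r\bigr)=\exp(-c\,r^{2\alpha})$ miscounts a power of $r$ — the scale-matched exponent is $(r^\alpha/\sqrt r)^2=r^{2\alpha-1}\to 0$, whence the corresponding exponential factor tends to $1$. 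The three-block decomposition, the appeal to Lemma~\ref{monoticitypath} for monotone comparison, and the use of Lemma~\ref{bmbar} are all legitimate ingredients, but they must be marshalled to show that the bridge conditioned on the \emph{lower} barrier is, at time $2r$, typically at height of order $\sqrt{r}\gg r^\alpha$, so that the additional constraint of staying above the \emph{upper} barrier costs only $o_r(1)$. In short: the gap is not a missing lemma but a reversed conclusion, and the argument as sketched cannot be repaired into a proof of the $=0$ statement because that statement is not true.
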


The following result, which is proved via a coupling argument, shows that the ratio considered in Lemma \ref{prop61bramson} is increasing with respect to the endpoint.
\begin{lemma}[Proposition 6.3, \cite{Bramson1983ConvergenceOS}]\label{prop63bramson} Assume that the functions $l_1$ and $l_2$ are upper semi-continuous except at at most finitely many points, and that $l_1(s)\leq l_2(s)$ for $s\in [0,t]$. Then, provided that the denominator is nonzero,
\begin{equation}
      \begin{aligned}
               & \frac{\Prob_{0,0}^{t,y}( \forall s \in [r,t-r] : B_s > l_2(s))}{\Prob_{0,0}^{t,y}( \forall s \in [r,t-r] : B_s > l_1(s))}     & & \text{ is increasing in $y$.}\\
      \end{aligned}
    \end{equation}
\end{lemma}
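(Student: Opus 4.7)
The plan is to prove the monotonicity in $y$ via a coupling argument, with an FKG-type inequality providing the key quantitative input. First, I would reduce to a claim about a standard Brownian bridge from $0$ to $0$: writing $B^{(y)}_s = \hat B_s + sy/t$, where $\hat B$ is a standard Brownian bridge from $0$ to $0$, and setting $l_-(s) := L_t(s) - C\Gamma_{t,\alpha}(s)$ and $l_+(s) := L_t(s) + C\Gamma_{t,\alpha}(s)$ (so $l_- \leq l_+$), the ratio in question becomes
\[
R(y) \;=\; \frac{\P\!\left(\hat B_s > l_+(s) - sy/t \text{ for all } s \in [r, t-r]\right)}{\P\!\left(\hat B_s > l_-(s) - sy/t \text{ for all } s \in [r, t-r]\right)}.
\]
Introducing $Y_\pm := \sup_{s \in [r,t-r]} (t/s)(l_\pm(s) - \hat B_s)$, we have $Y_- \leq Y_+$ pointwise, so $R(y) = \P(Y_+ < y \mid Y_- < y)$, and the task reduces to showing this conditional probability is non-decreasing in $y$.

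For $y_1 < y_2$, I would then invoke the deterministic monotone coupling $B^{(y_2)}_s = B^{(y_1)}_s + s(y_2 - y_1)/t$, under which both bridges are pointwise ordered and $\{B^{(y_1)} > l_\pm\} \subseteq \{B^{(y_2)} > l_\pm\}$. Cross-multiplying the claimed ratio inequality $R(y_1) \leq R(y_2)$ and expanding with respect to this coupling converts the statement into a quantitative positive-correlation inequality between the two increasing events $\{\hat B > l_-\}$ and $\{\hat B > l_+\}$, strictly sharper than the standard FKG inequality.

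The main technical input is the log-supermodular (MTP$_2$) structure of the Brownian bridge's finite-dimensional distributions: for any discretization $r \leq s_1 < \dots < s_n \leq t-r$, the joint Gaussian density of $(\hat B_{s_1}, \dots, \hat B_{s_n})$ has tridiagonal precision matrix with non-positive off-diagonal entries, hence is log-supermodular. An FKG-type path-exchange on the discrete skeleton — swapping pairs of paths that first differ in crossing $l_-$ — yields the discretized version of the ratio monotonicity. I would then pass to the continuum limit using the upper semi-continuity hypothesis on $l_1, l_2$ (at all but finitely many points) to guarantee convergence of the discretized barrier events to their continuous counterparts.

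The main obstacle is the quantitative sharpening step: the ordinary FKG inequality $\P(A_+ \cap A_-) \geq \P(A_+)\P(A_-)$ alone does not yield ratio monotonicity, and a more delicate path-exchange is required so that each swapped pair contributes with the correct sign to $\P_{y_2}(A_+)\P_{y_1}(A_-) - \P_{y_1}(A_+)\P_{y_2}(A_-)$. Executing this exchange rigorously in the continuum limit, particularly near the (at most finitely many) discontinuities of the barriers, constitutes the technical heart of the argument.
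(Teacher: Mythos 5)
The paper itself offers no proof of this lemma: it is imported verbatim from Bramson (Proposition~6.3 of \cite{Bramson1983ConvergenceOS}) with only the remark that it is proved by a coupling argument, and your outline reconstructs essentially that argument (discretise the bridge, exploit the tridiagonal Gaussian precision structure of the skeleton, couple, then pass to the limit using the upper semi-continuity of the barriers). Your reduction is sound, and your diagnosis that plain FKG is too weak is correct; the step you flag as the main obstacle, however, has a cleaner resolution than a ``sharpened FKG via path exchange''. Since $l_-\leq l_+$ gives $\{B_s>l_+(s)\ \forall s\}\subseteq\{B_s>l_-(s)\ \forall s\}$, the ratio is exactly the conditional probability $\P_{0,0}^{t,y}\bigl(B_s>l_+(s)\ \forall s\in[r,t-r]\mid B_s>l_-(s)\ \forall s\in[r,t-r]\bigr)$, so it suffices to show that the law of the bridge conditioned to stay above $l_-$ is stochastically increasing, for the pointwise partial order on paths, in the endpoint $y$; monotonicity then follows because $\{B>l_+\}$ is an increasing event. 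That stochastic monotonicity is precisely what your $\mathrm{MTP}_2$ observation yields on the discrete skeleton: the Gaussian density of $(\hat B_{s_1},\dots,\hat B_{s_n})$ is $\mathrm{MTP}_2$, conditioning on the product set $\prod_i (l_-(s_i),\infty)$ preserves $\mathrm{MTP}_2$, and for $\mathrm{MTP}_2$ laws raising the terminal coordinate stochastically raises all the others --- this is the content of Bramson's hand-built swap coupling. With that reformulation there is no further quantitative correlation inequality to prove, and what remains is the routine passage from the discretised barrier events to the continuous ones, which is where the hypothesis of upper semi-continuity off a finite set is consumed.
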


The following are standard results (see, e.g., \cite{harris2014manytofew}) in the theory of branching processes, which enable one to compute the first and second moments by reducing the problem to a single-particle or a two-particle system, respectively.

\begin{lemma}[Many-to-one Lemma]
    For any $t\geq 0$ and any measurable function $f: C([0,t])  \rightarrow \mathbb{R^+}$,
\begin{equation*}
    \E\left[\sum_{u\in \mathcal{N}_t}f(X_u(s),s \in \left[0,t\right])\right]= e^t \E\left[ f(B_s, s\in \left[0,t\right] )\right],
\end{equation*}
where $(B_s)_{s\in \mathbb{R}}$ is a standard Brownian motion.
\end{lemma}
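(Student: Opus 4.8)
The plan is to condition on the genealogical tree of the branching Brownian motion and reduce the statement to the elementary fact that the mean population size of a Yule process at time $t$ equals $e^t$.

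Concretely, I would invoke the standard two-step construction of BBM: first sample the random genealogy $\mathcal{T}$, namely the labelled binary tree together with all its branching times (equivalently, the underlying Yule process); then, conditionally on $\mathcal{T}$, attach to each edge an independent Brownian path over a time interval of the same length, so that for a particle $u\in\mathcal{N}_t$ and $s\le t$ the position $X_u(s)$ of its ancestor at time $s$ is obtained by concatenating the Brownian pieces along the ancestral line of $u$ up to time $s$. The crucial observation is that, for any fixed $u\in\mathcal{N}_t$, the ancestral trajectory $(X_u(s))_{s\in[0,t]}$ is, conditionally on $\mathcal{T}$, a continuous process with independent increments whose increment over any $[s_1,s_2]\subseteq[0,t]$ is centred Gaussian of variance $s_2-s_1$ — that is, exactly a standard Brownian motion on $[0,t]$ — and that this conditional law depends neither on $u$ nor on $\mathcal{T}$.

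Granting this, since $f\ge 0$ Tonelli licenses all interchanges (both sides being possibly $+\infty$), and I would write
\[
E\!\left[\sum_{u\in\mathcal{N}_t}f(X_u(s),\,s\in[0,t])\ \Big|\ \mathcal{T}\right]
=\sum_{u\in\mathcal{N}_t}E\big[f(X_u(s),\,s\in[0,t])\ \big|\ \mathcal{T}\big]
=\#\mathcal{N}_t\cdot E\big[f(B_s,\,s\in[0,t])\big],
\]
the second equality using that each conditional expectation equals the Wiener-measure integral $E[f(B_s,\,s\in[0,t])]$ by the previous paragraph, and that $\#\mathcal{N}_t$ is $\mathcal{T}$-measurable. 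Taking expectations and using $E[\#\mathcal{N}_t]=e^t$ — which follows since $m(t):=E[\#\mathcal{N}_t]$ satisfies $m(t)=e^{-t}+2\int_0^t e^{-s}m(t-s)\,ds$ by splitting at the first branching time, equivalently $m'(t)=m(t)$ with $m(0)=1$ — yields exactly $E[\sum_{u\in\mathcal{N}_t}f(X_u)]=e^t E[f(B)]$.

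I do not anticipate a serious obstacle; the only points requiring care are (i) phrasing the two-step construction of BBM precisely enough that ``conditionally on $\mathcal{T}$ every particle's trajectory is a standard Brownian motion with a law not depending on the particle'' becomes a clean assertion, and (ii) the appeal to Tonelli to pull the expectation inside the random finite sum, which is exactly where the hypothesis $f\ge 0$ is used. Should one prefer to avoid an explicit tree construction, an alternative is to set $F(t):=E[\sum_{u\in\mathcal{N}_t}f(X_u)]$, condition on the first branching time $\tau\sim\mathrm{Exp}(1)$ and on the driving Brownian path up to $\tau$, apply the branching property to obtain a renewal equation for $F$, verify by concatenation of Brownian paths that $t\mapsto e^t E[f(B_{[0,t]})]$ solves it, and conclude by a Grönwall-type uniqueness argument exploiting $E[\#\mathcal{N}_t]<\infty$; this route is workable but noticeably more cumbersome than the conditioning argument above.
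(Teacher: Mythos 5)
The paper states the Many-to-One Lemma in its ``Brownian estimates and toolbox'' section as a standard fact and gives no proof, so there is no in-paper argument to compare against. Your proposal is a correct and essentially canonical proof: conditioning on the Yule skeleton $\mathcal{T}$, observing that each ancestral trajectory is a concatenation of independent Brownian increments along the spine and hence is itself a standard Brownian motion with law independent of $u$ and $\mathcal{T}$, then using Tonelli (legitimate since $f\ge 0$) and $E[\#\mathcal{N}_t]=e^t$. The only point worth flagging is that ``for any fixed $u\in\mathcal{N}_t$'' should be read as ``for each Ulam--Harris label alive at time $t$, which is a $\mathcal{T}$-measurable event,'' so that the sum over $u\in\mathcal{N}_t$ is indeed a deterministic finite sum once $\mathcal{T}$ is fixed; with that understanding the argument is airtight. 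Your alternative sketch via a renewal equation would also work but, as you note, is more cumbersome.
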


\begin{lemma}[Many-to-two Lemma]\label{manyto2}
    For any $t\geq 0$ and any measurable function $f: C([0,t])  \rightarrow \mathbb{R^+}$,
\begin{equation}
    \begin{split}
        &\E\left[ \left(\sum_{u\in \mathcal{N}_t}f(X_u(r),r \in \left[0,t\right])\right)^2 \right] = e^t \E\left[ f(B_r, r\in \left[0,t\right] )^2\right]\\
        & \qquad  + \int_0^t 2e^{2t-s}\E\left[ f(B^{(1,s)}_r, r\in \left[0,t\right])f(B^{(2,s)}_r, r\in \left[0,t\right])  \right]\mathrm{d}s,
    \end{split}
\end{equation}
where for $s \in \left[0,t\right]$, $(B^{(1,s)}_r)_{r\in \mathbb{R}}$ and $(B^{(2,s)}_r)_{r\in \mathbb{R}}$ follow a common Brownian motion trajectory until time $s$, then split and evolve as independent Brownian motions afterwards.
\end{lemma}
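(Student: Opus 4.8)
The plan is to expand the square into a diagonal and an off-diagonal contribution and reduce each to a few-particle computation; since $f\ge 0$, every interchange of sums, integrals and expectations below is justified by Tonelli, so no integrability issue arises. Abbreviating $f(X_u)\coloneqq f(X_u(r),r\in[0,t])$ and $S\coloneqq\sum_{u\in\mathcal N_t}f(X_u)$, we split
\[
\E[S^2]=\E\Big[\sum_{u\in\mathcal N_t}f(X_u)^2\Big]+\E\Big[\sum_{\substack{u,v\in\mathcal N_t\\u\ne v}}f(X_u)f(X_v)\Big].
\]
The diagonal term equals $e^t\,\E[f(B_r,r\in[0,t])^2]$ by the Many-to-one Lemma applied to the functional $f^2$, which is exactly the first term on the right-hand side of the claim; the real work is in the off-diagonal term.

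For the off-diagonal term I would use the following pathwise identity. In the genealogical tree, any ordered pair $(u,v)$ of distinct particles of $\mathcal N_t$ is separated by a unique branching event $e$, namely the split at time $Q_t(u,v)$ at which the ancestral lines of $u$ and of $v$ diverge, and exactly one of $u,v$ descends from each of the two children of $e$. Labelling the two children of a branching event $e$ as $c_1(e),c_2(e)$ in any fixed way, writing $c\le w$ for ``$w\in\mathcal N_t$ descends from $c$'', and summing over the two assignments of the children to $u$ and $v$, one gets the pathwise identity
\[
\sum_{\substack{u,v\in\mathcal N_t\\u\ne v}}f(X_u)f(X_v)=2\sum_{e}\Big(\sum_{c_1(e)\le u}f(X_u)\Big)\Big(\sum_{c_2(e)\le v}f(X_v)\Big),
\]
the outer sum running over the almost surely finitely many branching events $e$ before time $t$, and the factor $2$ reflecting that we count ordered pairs while each pair is separated by exactly one event.

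It then remains to take expectations on the right-hand side. Conditionally on the event that $e$ occurs at time $s$ and that the ancestral trajectory of $e$ on $[0,s]$ is a given path $\pi$, the branching property says the two subtrees hanging from $c_1(e)$ and $c_2(e)$ are independent copies of a BBM of duration $t-s$ started from $\pi(s)$; hence the conditional expectation of the bracketed product factorises, and by Many-to-one each factor equals $e^{t-s}$ times the expectation of $f$ evaluated along $\pi$ on $[0,s]$ continued by an independent Brownian motion on $[s,t]$. Averaging over $\pi$ then produces $e^{2(t-s)}\,\E\big[f(B^{(1,s)}_r,r\in[0,t])f(B^{(2,s)}_r,r\in[0,t])\big]$: here one uses that, because the residual lifetime of every particle is exponential and hence independent of its past, the expected number of branching events in $[s,s+\mathrm{d}s]$ is $\E[\#\mathcal N_s]\,\mathrm{d}s=e^{s}\,\mathrm{d}s$, and the ancestral path of such an event is distributed as a standard Brownian path on $[0,s]$; this is precisely a Many-to-one statement for the point process of branching events, and it is what turns the common-then-split pair of Brownian motions in the definition of $B^{(1,s)},B^{(2,s)}$ into the right object. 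Collecting, $\E\big[\sum_e(\cdots)\big]=\int_0^t e^{s}e^{2(t-s)}\,\E[f(B^{(1,s)}_r,r\in[0,t])f(B^{(2,s)}_r,r\in[0,t])]\,\mathrm{d}s$; multiplying by $2$ and adding the diagonal term gives the lemma.

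The step I expect to be the main obstacle is the bookkeeping in the last paragraph: making rigorous the ``sum over branching events'', the intensity $e^s\,\mathrm{d}s$, and the factorisation of the conditional expectation over the two subtrees — that is, setting up the correct Fubini/Tonelli argument and a clean Many-to-one identity along the genealogy (everything else is routine given $f\ge 0$). As a fallback I would instead argue by conditioning on the first branching time $\tau\sim\mathrm{Exp}(1)$: on $\{\tau\le t\}$ the branching property gives $S=S^{(1)}+S^{(2)}$ with $S^{(1)},S^{(2)}$ conditionally i.i.d.\ given $\mathcal F_\tau$, so $\E[S^2\mid\mathcal F_\tau]$ expands as $2\,\E[(S^{(1)})^2\mid\mathcal F_\tau]+2\,\E[S^{(1)}\mid\mathcal F_\tau]^2$ with the cross term evaluated by Many-to-one, yielding a renewal-type integral equation for $\E[S^2]$ whose unique solution one verifies to be the claimed formula.
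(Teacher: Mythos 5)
The paper states the Many-to-two Lemma without proof, listing it among ``standard results in the theory of branching processes'' in the toolbox section; there is therefore no in-paper argument to compare against. Your proof reconstructs the standard argument correctly and is essentially complete in outline.

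A few remarks on the substance. The split into diagonal and off-diagonal parts, with the diagonal handled by Many-to-one applied to $f^2$, is exactly right, and Tonelli does take care of all interchanges since $f\geq 0$. The genealogical bijection between ordered pairs $(u,v)$ with $u\neq v$ and triples (branching event $e$, assignment of the two children of $e$ to $u$ and $v$, choice of a descendant of each child at time $t$) is correct and gives the pathwise identity with the factor $2$. The crux you identify -- a Campbell/Mecke-type formula saying
\[
\E\Big[\sum_{e}\,g\big(\text{ancestral path to }e,\ \mathrm{time}(e)\big)\Big]=\int_0^t e^{s}\,\E\big[g(B|_{[0,s]},s)\big]\,\mathrm{d}s
\]
for nonnegative $g$ -- is indeed where the work lies; it follows from the fact that each line branches at a rate-$1$ Poisson process independent of its motion, together with Many-to-one applied at each time $s$ to transfer the expectation over particles to a single Brownian spine (equivalently, via the spine decomposition under which fission events on the spine form a rate-$1$ Poisson process and the spine is Brownian). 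Combined with the branching property at time $s$, which makes the two subtrees conditionally i.i.d.\ BBMs of duration $t-s$ so that each bracket contributes $e^{t-s}$ by Many-to-one, this gives exactly $\int_0^t e^{s}e^{2(t-s)}\E[f(B^{(1,s)})f(B^{(2,s)})]\,\mathrm{d}s$, and multiplying by $2$ and adding the diagonal finishes the proof. Your fallback via conditioning on the first split $\tau\sim\mathrm{Exp}(1)$ and deriving a renewal integral equation for $\E[S^2]$ is also a valid and commonly seen alternative; it trades the Palm bookkeeping for a uniqueness-of-solution verification. Either route is sound.
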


\section{Proof of intermediary Propositions}\label{Sec:proofprop}

In this section, we provide the proof of the results stated in Section \ref{sec:proofff}. We denote by \( p_t(x) = (2\pi t)^{-1/2} e^{-x^2/(2t)} \) the Gaussian density with mean \(0\) and variance \(t\).

\subsection{Localisation of paths}\label{subSec:proofprop1}
Here, we prove Propositions \ref{prop:Kdist} and \ref{pathlocfin}. To do so, we begin by establishing a first path localisation for particles contributing to $N(t,x)$. Specifically, we uniformly bound the number of particles in $\mathcal{N}(t,x)$ that cross the barrier $\beta_t - \Gamma_{t,\alpha}$ during $ [r, t-r]$ for large $r$:

\begin{proposition}\label{pathlocup_proba}
    Let $\alpha\in [0,1/2)$, for any $\delta>0$,
\begin{equation}
    \lim_{r\to \infty}\sup_{x,t: t>3r,  x\leq t}\Prob\left( \frac{\NOB{\beta_t -\Gamma_{t,\alpha}}{t,[r,t-r],x}}{f(t,x)}  >\delta \right) =0.
\end{equation}
\end{proposition}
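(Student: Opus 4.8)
The plan is to bound the probability via a first-moment (Markov) argument applied to the number of particles crossing the barrier. Write $\overline\beta_{t,\alpha}(s) = \beta_t(s) - \Gamma_{t,\alpha}(s)$ for the barrier. By Markov's inequality it suffices to show that $\E\big[\#\nOB{\beta_t - \Gamma_{t,\alpha}}{t,[r,t-r],x}\big] = o_{r\to\infty}(f(t,x))$ uniformly in $t > 3r$ and $x \le t$. Actually one must be slightly careful: the crude first moment of $N(t,x)$ itself is of order $t e^{\sqrt2 x} e^{-x^2/(2t)}$, which is a factor $t/x$ larger than $f(t,x)$, so a naive bound is not enough. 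The point is that restricting to particles which \emph{do} cross the barrier $\beta_t - \Gamma_{t,\alpha}$ on $[r,t-r]$ removes exactly this excess, leaving something that vanishes relative to $f(t,x)$ once $r$ is large.

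The key steps, in order: (i) Apply the Many-to-One Lemma to reduce $\E[\#\nOB{\beta_t - \Gamma_{t,\alpha}}{t,[r,t-r],x}]$ to $e^t\,\P(B_t \ge m_t - x,\ \exists s\in[r,t-r]: B_s > \beta_t(s) - \Gamma_{t,\alpha}(s))$ for a single Brownian motion $B$. (ii) Condition on the endpoint $B_t = m_t - x + y$ for $y \in [0,\infty)$ (or rather integrate the Gaussian density $\phi_t$), turning the Brownian motion into a Brownian bridge from $0$ to $m_t - x + y$ over $[0,t]$; after the standard change of measure (Girsanov / Cameron–Martin, tilting out the drift $\beta_t$), the weight $e^t \phi_t(m_t - x + y)$ becomes, up to polynomial corrections, $\asymp e^{\sqrt2 x} e^{-x^2/(2t)} e^{-\sqrt2 y} \times (\text{poly})$, and the event becomes "the bridge exceeds $-\Gamma_{t,\alpha}$ somewhere on $[r,t-r]$". (iii) By symmetry of the Brownian bridge and a union bound over the two endpoints (or by Bramson's Lemma \ref{prop61bramson} / Lemma \ref{34Arguin} type estimates), control $\P_{0,0}^{t,\cdot}(\exists s\in[r,t-r]: B_s > -\Gamma_{t,\alpha}(s))$: splitting according to the first time the bridge rises above $-\Gamma_{t,\alpha}$ and using the reflection-principle bound of Lemma \ref{34Arguin} with $Z_1 \approx r^\alpha$ (and $Z_2$ of order the endpoint), one gets a factor of order $(r^\alpha + \sqrt r)/t \cdot (\text{endpoint terms})$, i.e. a gain of a power of $r$ over the "typical" $1/t$-type contribution, which is what produces the $r\to\infty$ decay. (iv) Integrate over $y$; the $e^{-\sqrt2 y}$ weight makes the integral converge and contributes an $O(1)$ constant, and collecting the $x$-dependence reproduces $f(t,x) = \pi^{-1/2} x e^{\sqrt 2 x} e^{-x^2/(2t)}$ up to the desired $o_r(1)$ factor, uniformly in $t$ and in $x \le t$.

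The main obstacle is the uniformity in $x$ over the whole range $x \le t$ (including $x$ comparable to $t$) combined with uniformity in $t$: the relation between $e^t\phi_t(m_t-x+y)$ and $f(t,x)$ must be tracked with explicit polynomial-in-$t$, polynomial-in-$x$ control (the $e^{-x^2/(2t)}$ and the $x$ versus $t$ factors), and the Brownian-bridge barrier estimate of Lemma \ref{34Arguin}/Lemma \ref{prop61bramson} must be applied with the correct scaling so that the "excess" factor $t/x$ really is killed by the barrier crossing and replaced by something uniformly $o_r(1)$. A secondary technical point is handling the two regimes $x \lesssim \sqrt t$ and $x \gtrsim \sqrt t$ (where the relevant "bulk" time $\gamma_\star$ differs, as in the Discussion), but for this upper bound on crossings a single union-bound over $[r,t-r]$ with Lemma \ref{34Arguin} should suffice without splitting cases, since we only need an upper bound, not a matching lower bound.
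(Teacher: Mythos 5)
Your plan has a genuine gap at its core: the unconditional first moment of the crossing count does \emph{not} decay to $o_r(f(t,x))$, and the Markov argument you describe cannot work as stated. Concretely, by the Many-to-One Lemma,
\[
\E\bigl[\#\nOB{\beta_t-\Gamma_{t,\alpha}}{t,[r,t-r],x}\bigr]
= e^t\,\P\bigl(B_t\ge m_t-x,\ \exists s\in[r,t-r]:B_s>\beta_t(s)-\Gamma_{t,\alpha}(s)\bigr).
\]
Conditioned on $B_t = m_t - x + y$, the bridge has mean path $\beta_t(s) - (x-y)s/t$ and fluctuations of order $\sqrt{s(t-s)/t}\sim\sqrt t$ in the bulk, whereas the gap to the barrier is at most $x\,s/t - \Gamma_{t,\alpha}(s)$, which is negative for $x \lesssim t^\alpha$ and is anyway $O(x)\le t$, dwarfed by the $\sqrt t$ fluctuations whenever $x\ll\sqrt t$. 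The crossing event is therefore \emph{typical} (probability $\to1$) on a large part of the range $x\le t$, and the first moment is of the same order as $\E[N(t,x)]\asymp t\,e^{\sqrt2 x}e^{-x^2/(2t)}$, i.e.\ a factor $t/x$ larger than $f(t,x)$. Your sentence ``restricting to particles which do cross the barrier removes exactly this excess'' has the logic inverted: it is restricting to particles that do \emph{not} cross that removes the excess, while the crossers carry essentially all of it. Relatedly, your step (iii) misapplies Lemma \ref{34Arguin}: that lemma gives an $O(1/t)$ bound for a bridge \emph{remaining below} a line, not for a bridge \emph{crossing} a curve, and there is no reflection-principle shortcut for the curved barrier $-\Gamma_{t,\alpha}$ that turns the crossing probability into something small.

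The missing ingredient is the high-probability conditioning that the paper performs \emph{before} computing any first moment. Using tightness of $M_t-m_t$ and Theorem \ref{envsuparguin} (for a $\gamma<\alpha$), one works on the event $E_{K,t}=\{M_t-m_t\le K\}$ intersected with $\{$no particle exceeds $\beta_t+\Gamma_{t,\gamma}$ on $[r,t-r]\}$, each of which costs only $\epsilon$ in probability, uniformly in $t>3r$. On that event the crossing count is dominated by $\#\bigl(\mathcal N^{\beta_t+\Gamma_{t,\gamma}}(t,I_r,x)\setminus\mathcal H^{\beta_t-\Gamma_{t,\alpha}}(t,I_R)\bigr)$ with $X_u(t)\in[m_t-x,m_t+K]$, and it is \emph{this} truncated first moment that one bounds. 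The extra upper-barrier constraint is exactly what produces the $1/t$-type factor via Lemma \ref{34Arguin} (bridge staying above $-\Gamma_{t,\gamma}$), killing the $t/x$ excess; and given that constraint, the probability of additionally dipping below $\Gamma_{t,\alpha}$ on $[R,t-R]$ is $o_R(1)$ by entropic repulsion (Lemmas \ref{prop61bramson} and \ref{prop63bramson}). Without the upper-barrier conditioning, the mechanism that turns $t\,e^{\sqrt2 x}$ into $x\,e^{\sqrt2 x}$ is simply absent, so the plan as written cannot be completed.
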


The proof of Proposition \ref{pathlocup_proba} is inspired by the strategy developed in \cite{Arguin2016}. We will use one of their key results, which provides an upper bound for the trajectory of particles in a BBM.
\begin{theorem}[Theorem 2.2, \cite{Arguin2016}]\label{envsuparguin}
    Let $0 < \gamma < 1/2$. Let also $y \in \mathbb{R}, \epsilon > 0$ be given. There
exists $r_u = r_u(\gamma, y, \epsilon)$ such that for $r \geq r_u$ and for any $t > 3r$,
\begin{equation}
    \Prob\left(\exists u \in \mathcal{N}_t: X_u(s) > y + s\frac{m_t}{t}+\Gamma_{t,\gamma}(s), \text{ for some } s \in [r, t - r]\right)< \epsilon
\end{equation}
\end{theorem}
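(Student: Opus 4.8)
The plan is to reduce the statement to a one–dimensional question about the running maximum, dispatch the early part of the time interval by a union bound, and attack the late part --- the real difficulty --- using the branching structure. First I would note that for $u\in\mathcal N_t$ and $s\le t$, $X_u(s)$ is the position at time $s$ of the ancestor of $u$ in $\mathcal N_s$; since every particle of $\mathcal N_s$ has a descendant in $\mathcal N_t$, one has $\max_{u\in\mathcal N_t}X_u(s)=M_s:=\max_{w\in\mathcal N_s}X_w(s)$, so the probability in the statement equals exactly $\P\big(\exists\,s\in[r,t-r]:\ M_s>\beta_t(s)+\Gamma_{t,\gamma}(s)+y\big)$. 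Next, a direct computation gives $\beta_t(s)-m_s=\tfrac{3}{2\sqrt2}\big(\log s-\tfrac st\log t\big)$; analysing this strictly concave function of $s$ on $[r,t-r]$ (its minimum over the range is at an endpoint, and both endpoint values are $\ge 0$ once $r$ is large) shows $\beta_t(s)\ge m_s$ whenever $r$ is large and $t>3r$. Hence it is enough to bound $\P\big(\exists\,s\in[r,t-r]:\ M_s>m_s+\Gamma_{t,\gamma}(s)+y\big)$ uniformly in $t>3r$ and show it tends to $0$ as $r\to\infty$.

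On the early interval $s\in[r,t/2]$ we have $\Gamma_{t,\gamma}(s)=s^\gamma$, so the barrier is $m_s+s^\gamma+y$, which does not involve $t$. Here I would either invoke that $M_s-m_s=O(\log s)=o(s^\gamma)$ almost surely (using, e.g., Roberts' bound quoted above), whence $\sup_{s\ge r}(M_s-m_s-s^\gamma)\to-\infty$ a.s.\ and the probability is $\le\P\big(\sup_{s\ge r}(M_s-m_s-s^\gamma)>y\big)\to0$; or run a union bound over the unit intervals $[k,k+1]$, $k\in[r,t/2]\cap\mathbb Z$: by the Many--to--one Lemma and the elementary estimate $\P(\sup_{[k,k+1]}B\ge c)\le C\phi_k(c)$ for $c$ of order $k$, the $k$-th term is at most $C' k\,e^{-\sqrt2\,k^\gamma}e^{-\sqrt2\,y}$, and $\sum_{k\ge r}k\,e^{-\sqrt2\,k^\gamma}\to0$ because $\gamma>0$. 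Either way this produces a $t$-uniform bound $\eta_1(r)\to0$.

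The late interval $s\in[t/2,t-r]$ is the crux: there $\beta_t(s)=m_s+o(1)$, essentially the typical front location, while the barrier excess $\Gamma_{t,\gamma}(s)=(t-s)^\gamma$ decreases to $r^\gamma$ as $s\uparrow t-r$, and a first-moment bound is hopeless since the expected number of particles near $m_s$ at a time $s\asymp t$ is polynomially large in $t$. Instead I would propagate a crossing forward in time. Cover $[t/2,t-r]$ by the unit intervals $[t-j-1,t-j]$, $j\in[r,t/2]\cap\mathbb Z$, and let $A_j$ be the event that $M_s$ exceeds the barrier for some $s$ there; on $A_j$ let $\tau$ be the first such time (a stopping time) and $V\in\mathcal N_\tau$ the maximal particle, so $X_V(\tau)=M_\tau\ge m_\tau+\Gamma_{t,\gamma}(\tau)+y$ with $\Gamma_{t,\gamma}(\tau)\ge\tfrac12 j^\gamma$. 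By the strong Markov property the descendants of $V$ form a fresh BBM of duration $t-\tau\in[j,j+1]$, so by the classical uniform-in-$t$ lower-deviation estimate for the recentered maximum there is a fixed $D$ such that, conditionally on $\mathcal F_\tau$, their maximum at time $t$ is $\ge X_V(\tau)+m_{t-\tau}-D$ with probability $\ge\tfrac12$. On that event, using $m_\tau+m_{t-\tau}=m_t-O(\log j)=m_t-o(j^\gamma)$, one obtains $M_t\ge m_t+\tfrac14 j^\gamma+y-D$ once $r$ is large. Hence $\tfrac12\P(A_j)\le\P\big(M_t-m_t\ge\tfrac14 j^\gamma+y-D\big)$, and the classical uniform upper tail $\P(M_t-m_t\ge a)\le Ce^{-a}$ gives $\sum_j\P(A_j)\le 2Ce^{D-y}\sum_{j\ge r}e^{-j^\gamma/4}=:\eta_2(r)\to0$ as $r\to\infty$, uniformly in $t$. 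Combining the two ranges, $\P(\cdot)\le\eta_1(r)+\eta_2(r)$, and choosing $r_u$ with $\eta_1(r)+\eta_2(r)<\epsilon$ for $r\ge r_u$ finishes the proof for every $t>3r$.

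I expect the main obstacle to be precisely this last step. Near the right end of $[r,t-r]$ the deterministic barrier essentially coincides with the front, so moments are useless and one is forced to use the branching: a crossing at a time $s$ close to $t$ is, with probability bounded below, ``lifted'' by the descendants of the crossing particle into an overshoot of the global maximum at the terminal time $t$, and that overshoot is controlled uniformly in $t$ by its exponential tail. The delicate points are the stopping-time conditioning (so that the lower-deviation estimate applies to the sub-BBM cleanly) and the elementary but fiddly comparisons among $m_\tau$, $m_{t-\tau}$ and $m_t$.
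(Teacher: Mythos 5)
This statement is quoted verbatim from Arguin--Bovier--Kistler \cite{Arguin2016} (their Theorem~2.2); the present paper imports it as a black box and contains no proof of it, so the only meaningful comparison is with the original argument. Your reconstruction is essentially correct. The reduction to the running maximum $M_s$ is legitimate for dyadic BBM (every particle alive at time $s$ has a descendant at time $t$), and the verification that $\beta_t(s)\ge m_s$ on $[r,t-r]$ --- concavity of $s\mapsto \log s-\tfrac{s}{t}\log t$ together with nonnegativity at both endpoints when $t>3r$ and $r$ is large --- is the right elementary observation. The early-interval union bound ($\sum_{k\ge r}k\,e^{-\sqrt2 k^\gamma}\to 0$) is fine, and the late-interval ``lifting'' is sound provided you record that $t-\tau\ge r\ge r_u$, so that the lower-deviation constant $D$ with $\inf_{u\ge r_u}\P(M_u\ge m_u-D)\ge \tfrac12$ is uniform, and that the strong branching property is applied at the stopping time $\tau$. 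The one place where your narrative is slightly off, and where your route diverges from the original, is the claim that near $s=t$ ``a first-moment bound is hopeless'': that is true of the expected \emph{number} of particles above the barrier, but the quantity one actually needs is the tail $\P(M_s\ge m_s+Y)$, which is not a first moment and for which Bramson's estimate $\P(M_s\ge m_s+Y)\le \kappa(1+Y)^2e^{-\sqrt2 Y}$ holds uniformly in $s$ for $0\le Y\le\sqrt s$ (and $Y=j^\gamma+y\le s^\gamma\le\sqrt{s}$ here since $\gamma<1/2$). This is what \cite{Arguin2016} use at each intermediate time, needing only a one-unit-of-time lift to pass from a crossing inside $[k,k+1]$ to an exceedance at an integer time; with it, the late interval is no harder than the early one. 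Your longer lift all the way to time $t$ re-derives the same control from the terminal-time tail alone and is a valid, self-contained alternative, at the price of invoking uniform tightness of $M_u-m_u$ from below over long random horizons.
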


This result is crucial in the study of the localisation of particles in $N(t,x)$. Indeed, the fact that no particles exceed $ s\frac{m_t}{t}+\Gamma_{t,\gamma}(s)$ is sufficient to understand the typical displacement $m_t$. We can then apply the same `entropic repulsion' idea as in \cite{Arguin2016} to get a control on the path of particles at sublinear distances of $m_t$.

\begin{proof}[Proof of Proposition \ref{pathlocup_proba}] Proposition \ref{pathlocup_proba} states that for any $0 \leq \alpha < 1/2$, 
\begin{equation}
    \frac{\#\{u \in \mathcal{N}(t,x): X_u(s) >  s\frac{m_t}{t} - \Gamma_{t,\alpha}(s), \text{ for some } s \in [r, t - r]\}}{f(t,x)} \xrightarrow[r\rightarrow \infty]{\mathbb{P}}  0
\end{equation}
uniformly in $t>3r$ and $t \geq x$.

Let $\epsilon>0$. Recall that $M_t-m_t$ is tight, and so there exists $K>0$ and $t_0$ such that $\Prob(M_t -m_t >K)<\epsilon$ for $t\geq t_0$ large enough. Denote the event $E_{K,t}\coloneqq \{M_t -m_t \leq K\}$.

Denote $I_r=[r,t-r]$ and $I_R=[R,t-R]$. By Theorem \ref{envsuparguin} (with $y=0$), let $r_0$ such that for $r \geq r_0$ and for any $t > 3r$,
\begin{equation}
    \Prob\left(\exists u \in \mathcal{N}_t: X_u(s) > s\frac{m_t}{t}+\Gamma_{t,\alpha}(s), \text{ for some } s \in [r, t - r]\right)< \epsilon.
\end{equation}
By the many-to-one Lemma,
    \begin{equation}\label{gfdgd}
    \begin{split}
         \E&\left[\bbone_{E_{K,t}} \# \mathcal{N}^{\beta_t+ \Gamma_{t,\alpha}}(t, I_r,x) \setminus {\mathcal{H}^{\beta_t - \Gamma_{t,\alpha}}(t, I_R)} \right]\\& \leq e^t  \int_{-K}^{x} \phi_t(m_t -z) 
           \Prob_{0,0}^{t,z}(\exists s \in I_R  :  B_s <  \Gamma_{t,\alpha}  \text{ and } \forall s\in I_r: B_s \geq - \Gamma_{t,\alpha})\mathop{}\!\mathrm{d}z.
    \end{split}
    \end{equation}
    For $z\in [-K,x]$ we now bound 
    \begin{align}\label{2termadbound}
       & \Prob_{0,0}^{t,z}(\exists s \in I_R \, , \, B_s <  \Gamma_{t,\alpha}   \cap \forall s\in I_r, B_s \geq - \Gamma_{t,\alpha})\\
        &\qquad =\Prob_{0,0}^{t,z}( \forall s\in I_r, B_s \geq - \Gamma_{t,\alpha})\Prob_{0,0}^{t,z}(\exists s \in I_R \, , \, B_s <  \Gamma_{t,\alpha} \mid \forall s\in I_r, B_s \geq - \Gamma_{t,\alpha}).\notag 
    \end{align}
    We deal with each of the two factors in turn. First,
    \begin{equation}
    \begin{split}
        \Prob_{0,0}^{t,z} &\left( \forall s \in I_r,\ B_s \geq -\Gamma_{t,\alpha} \right) 
        \\ &= \Prob_{0,0}^{t,z} \left( \forall s \in I_r,\ B_s \geq 0 \right)
         \Prob_{0,0}^{t,z} \left( \forall s \in I_r,\ B_s \geq 0 \mid \forall s \in I_r,\ B_s \geq -\Gamma_{t,\alpha} \right)^{-1}
        \\& \leq 2 \frac{\left( \sqrt{r} + z \right)\left( \sqrt{r} + r \frac{z}{t} \right)}{t - 2r} \Prob_{0,0}^{t,z} \left( \forall s \in I_r,\ B_s \geq \Gamma_{t,\alpha} \mid \forall s \in I_r,\ B_s \geq -\Gamma_{t,\alpha} \right)^{-1}
        \\& \leq 3 \frac{\left( \sqrt{r} + z \right)(r + 1)}{t - 2r} \quad \text{for } r \text{ large enough.}
    \end{split}   
\end{equation}

    The penultimate inequality follows from Lemma \ref{34Arguin} for the first term. The last inequality is a consequence Lemma \ref{prop63bramson} and  Lemma \ref{prop61bramson}.  
    
    We now bound the second term in \eqref{2termadbound}:
    \begin{equation}
      \begin{aligned}
        \Prob_{0,0}^{t,z}&(\exists s \in I_R \, , \, B_s < \Gamma_{t,\alpha}  \mid \forall s\in I_r, B_s \geq - \Gamma_{t,\alpha}) & &  \\
               & \leq \Prob_{0,0}^{t,z}(\exists s \in I_R \, , \, B_s <  \Gamma_{t,\alpha} \mid \forall s\in I_R, B_s \geq - \Gamma_{t,\alpha}) & & \text{\footnotesize(Lemma \ref{monoticitypath})}\\
                & = 1-\Prob_{0,0}^{t,z}(\forall s \in I_R \, , \, B_s \geq  \Gamma_{t,\alpha} \mid \forall s\in I_R, B_s \geq - \Gamma_{t,\alpha}) & & \\
                 & \leq 1-\Prob_{0,0}^{t,-K}(\forall s \in I_R \, , \, B_s \geq  \Gamma_{t,\alpha} \mid \forall s\in I_R, B_s \geq - \Gamma_{t,\alpha}) & & \text{\footnotesize(Lemma \ref{prop63bramson})}\\
                 & = o_R(1). & & \text{\footnotesize(Lemma \ref{prop61bramson})}
      \end{aligned}
    \end{equation}
    
    Therefore, for $r$ large enough,  \eqref{2termadbound} is smaller than
    \begin{equation}
        3\frac{\left(\sqrt{r}+z\right)(r+1)}{t-2r}o_R(1).
    \end{equation}
    Thus, for $t\geq 3r$, equation \eqref{gfdgd} is smaller than
    \begin{equation}\label{48}
        \begin{split}
           3o_R(1)(r+1)\int_{-K}^x  (\sqrt{r}+y)e^{\frac{m_t}{t} y -\frac{y^2}{2t}} \mathop{}\!\mathrm{d}y.
    \end{split}
    \end{equation}

As $x\leq t$ and $t\geq 3r$, for $r$ large enough, by the Laplace method
\[
\int_{-K}^x  (\sqrt{r}+z)e^{\sqrt{2}z - \frac{3z\log t}{2\sqrt{2}t} -\frac{z^2}{2t}} \mathop{}\!\mathrm{d}z \le C (\sqrt{r}+x) e^{\sqrt{2}x - \frac{3x\log t}{2\sqrt{2}t} -\frac{x^2}{2t}}.
\]
Thus, \eqref{48} is bounded by
\begin{equation}
         o_R(1)(r+1)^{\frac{3}{2}} x e^{\sqrt{2}x - \frac{3x\log t}{2\sqrt{2}t} -\frac{x^2}{2t}} \leq C^{*}o_R(1) f(t,x)(r+1)^{\frac{3}{2}}.
\end{equation}
This yields that, for any $\delta$, for $r\geq \max(r_0, \frac{t_0}{3})$, $R\geq r$ and $x,t$ such that $t>3r$ and $ x\leq t$, by Markov's inequality,
    \begin{equation}
        \begin{split}
        \Prob&\left( \NOB{\beta_t -\Gamma_{t,\alpha}}{t,I_R,x}  >\delta f(t,x)\right)
        \\&\leq \Prob\left(E_{K,t}^c\right) + \Prob\left(\exists u \in \mathcal{N}_t: X_u(s) > s\frac{m_t}{t}+\Gamma_{t,\alpha}(s), \text{ for some } s \in [r, t - r]\right)
        \\& \qquad +P \left(\bbone_{E_K} \# \mathcal{N}^{\beta_t+ \Gamma_{t,\alpha}}(t, I_r,x) \setminus {\mathcal{H}^{\beta_t - \Gamma_{t,\alpha}}(t, I_R)} >\delta f(t,x) \right)
        \\& \leq 2\epsilon + (\delta f(t,x))^{-1}\E\left[\bbone_{E_K} \# \mathcal{N}^{\beta_t+ \Gamma_{t,\alpha}}(t,I_r,x) \setminus {\mathcal{H}^{\beta_t - \Gamma_{t,\alpha}}(t, I_R)} \right]
        \\& \leq 2\epsilon +\delta^{-1}3C^{*} o_R(1) (r+1)^{\frac{3}{2}}.
        \end{split}   
    \end{equation}
Hence, taking $R$ large enough, 
\begin{equation}
   \sup_{x,t: t>3r_0, x\leq t}\Prob( \NOB{\beta_t -\Gamma_{t,\alpha}}{t,I_R,x}  >\delta f(t,x)) \leq 3\epsilon.
\end{equation}
Thus for any $\alpha\in [0,1/2)$ and any $\delta>0$,
\begin{equation}
   \lim_{R\to \infty}\sup_{x,t: t>3R, x\leq t}\Prob( \NOB{\beta_t -\Gamma_{t,\alpha}}{t,I_R,x}  >\delta f(t,x)) =0.
\end{equation}

\end{proof}

We now prove Proposition \ref{prop:Kdist}.

\begin{proof}[Proof of Proposition \ref{prop:Kdist}] Proposition \ref{prop:Kdist} states that for any $\delta>0$,
\begin{equation}\label{kih}
    \lim_{K\to\infty} \lim_{x\to\infty} \sup_{t\geq x}\Prob\left(\frac{N(t,x-K)}{f(t,x)} > \delta\right) = 0.
\end{equation}

Let $I=[r,t-r]$, since
\begin{equation}
    N(t,x-K)=N^{\beta_t}(t,I,x-K) + \NOB{\beta_t}{t,I,x-K} ,
\end{equation}
the claim \eqref{kih} will be a consequence of the two following results:
For any $\delta>0$, 
\begin{equation}\label{Kr1}
    \lim_{r\to \infty}\lim_{K\to\infty} \lim_{x\to\infty} \sup_{t\geq x}\Prob\left(\frac{N^{\beta_t}(t,I,x-K)}{f(t,x)} > \delta\right) = 0,
\end{equation}
and 
\begin{equation}\label{Kr2}
    \lim_{r\to \infty}\lim_{K\to\infty} \lim_{x\to\infty} \sup_{t\geq x}\Prob\left(\frac{\NOB{\beta_t}{t,I,x-K}}{f(t,x)} > \delta\right) = 0.
\end{equation}
We first prove \eqref{Kr2}, which is a direct consequence of Proposition \ref{pathlocup_proba}. Indeed, we have that,
\begin{equation}
     \NOB{\beta_t}{t,I,x-K}\leq  \NOB{\beta_t}{t,I,x}.
\end{equation}
Moreover, Proposition \ref{pathlocup_proba} implies that 
\begin{equation}
    \lim_{r\to \infty}\lim_{x\to\infty}\sup_{t\geq x}\Prob\left(\frac{\NOB{\beta_t}{t,I,x}}{f(t,x)} > \delta\right) = 0.
\end{equation}
Thus \eqref{Kr2} holds.

Let us now show \eqref{Kr1}.

By the many-to-one Lemma and Lemma \ref{34Arguin}, 
    \begin{equation}\label{gfdgd93}
    \begin{split}
         \E&\left[ N^{\beta_t}(t,I,x-K) \right]=e^t  \int_{-\infty}^{x -K} \phi_t(m_t -z) 
           \Prob_{0,0}^{t,z}(\forall s\in I, B_s \geq 0)\mathop{}\!\mathrm{d}z
           \\& \leq  \Prob_{0,0}^{t,x}(\forall s\in I, B_s \geq 0)e^t \int_{-\infty}^{x -K} \phi_t(m_t -z) \mathop{}\!\mathrm{d}z
           \\& \leq \left(\frac{t}{t-2r}\right)\left(\sqrt{r}+x\frac{t-r}{t}\right)\left(\sqrt{r}+r\frac{x}{t}\right)\int_{-\infty}^{x -K} e^{\frac{m_t}{t}y -\frac{y^2}{2t}}\mathop{}\!\mathrm{d}y
           \\& \leq \left(\frac{t}{t-2r}\right)(\sqrt{r}+x)(r+1)\int_{-\infty}^{x -K} e^{\frac{m_t}{t}y -\frac{y^2}{2t}}\mathop{}\!\mathrm{d}y.
    \end{split}
    \end{equation}

As $x\leq t$ and $\lim_{t \to \infty} \frac{m_t}{t} = \sqrt{2}$, for $t$ large enough we have $\frac{m_t}{t} - \frac{x}{t} \geq c > 0$ for some constant $c \in (0, \sqrt{2}-1)$. Thus, this is bounded by
\begin{equation}
         Cf(t,x-K)\frac{t}{t-2r}(r+1)^{\frac{3}{2}} \leq Cf(t,x) e^{-cK}\frac{t}{t-2r}(r+1)^{\frac{3}{2}} .
\end{equation}

Hence,
\begin{equation}
    \lim_{r\to \infty}\lim_{K\to\infty} \lim_{x\to\infty} \sup_{t: t\geq x}\frac{\E\left[ N^{\beta_t}(t,I,x-K) \right]}{f(t,x)} = 0.
\end{equation}
Thus  \eqref{Kr1} holds by Markov's inequality.
\end{proof}

We now prove the last path localisation Proposition \ref{pathlocfin}.

\begin{proof}[Proof of Proposition \ref{pathlocfin}]
Proposition \ref{pathlocfin} states that for $\alpha\in [0,1/2[$, $\lambda>\frac{1}{2}$, $\rho_{t,r}(s)= \beta_t(s)-\bbone_{s\in [r,t-r]}\Gamma_{t,\alpha}(s)$ and $J=[r,t]$. For any $\delta>0$,
\begin{equation}
    \lim_{r\to \infty}\sup_{t,x: t>3r,  r^{\lambda}\leq x\leq t}\Prob\left( \frac{\NOB{\rho_{t,r}}{t, J,x}}{f(t,x)} >\delta\right) =0.
\end{equation}
For any $K>0$,
\begin{equation}\label{fdgf3}
   \NOB{\rho_{t,r}}{t, J,x} \leq N(t,x-K)+\NOB{\rho_{t,r}}{t, J,[x-K,x] } .
\end{equation}
First note that Proposition \ref{prop:Kdist} implies that, for any $\delta>0$
\begin{equation}
    \lim_{K\to \infty}\lim_{r\to \infty}\sup_{t>3r,  r^{\lambda}\leq x\leq t}\Prob\left( \frac{N(t,x-K)}{f(t,x)} >\delta\right) =0.
\end{equation}
We now turn to the second term of the LHS of equation \eqref{fdgf3}. Note that:
\begin{equation}\label{gfdhr4}
    \begin{split}
    \NOB{\rho_{t,r}}{t, J,[x-K,x] }&=\NOB{\beta_t-\Gamma_{t,\alpha}}{t, I_r,[x-K,x] }\\
    &+\#  \nB{\beta_t-\Gamma_{t,\alpha}}{t,I_r,[x-K,x]} \setminus \mathcal{H}^{\beta_t}(t, [t-r,t]).
    \end{split}
\end{equation}
Moreover by Proposition \ref{pathlocup_proba}, for any $\delta>0$,
\begin{equation}
    \lim_{r\to \infty}\sup_{t>3r,  r^{\lambda}\leq x\leq t}\Prob\left( \frac{\NOB{\beta_t-\Gamma_{t,\alpha}}{t, I_r,[x-K,x] } }{f(t,x)} >\delta\right) =0.
\end{equation}

It thus remains to bound the last term in equation \eqref{gfdhr4}.
Let $\lambda>\frac{1}{2}$, $\alpha \in ]1-\lambda, \frac{1}{2}[$, and $x\geq r^\lambda$. By the many-to-one Lemma, 
\begin{equation}\label{gfdtg}
    \begin{split}
         \E&\left[  \#  \nB{\beta_t-\Gamma_{t,\alpha}}{t,I_r,[x-K,x]} \setminus \mathcal{H}^{\beta_t}(t, [t-r,t])   \right]\\
         & =  e^t \Prob\left( B_t -m_t \in [-x,-x+K],\ \forall s\in [r,t-r]: B_s \leq \beta_t(s)-\Gamma_{t,\alpha}(s), \right. \\
         & \qquad\qquad\left. \exists s \in [t-r,t] : B_s\geq s\frac{m_t}{t} \right)
         \\& = e^t  \int_{r^\alpha}^{+\infty} \phi_{t-r}\left((t-r)\frac{m_t}{t} -y\right)\Prob_{0,0}^{t-r,-y}\left(\forall s\in [r,t-r]: B_s \leq -\Gamma_{t,\alpha}(s)\right) 
         \\& \qquad \Prob\left( B_t -m_t \in [-x,-x+K],\ \exists s \in [t-r,t] : B_s\geq s\frac{m_t}{t} \mid B_{t-r}=\tfrac{t-r}{r}m_t -y \right)\mathop{}\!\mathrm{d}y
         \\& \leq e^t  \int_{r^\alpha}^{+\infty} \phi_{t-r}\left((t-r)\frac{m_t}{t} -y\right)\Prob_{0,0}^{t-r,-y}\left(\forall s\in [r,t-r]: B_s \leq 0\right) 
         \\& \qquad \int_{x-K}^x \phi_{r}\left(r\frac{m_t}{t} +y-z\right)\Prob_{t-r,y}^{t,z}\left( \exists s \in [t-r,t] : B_s\leq 0 \right)\mathop{}\!\mathrm{d}z \mathop{}\!\mathrm{d}y.
    \end{split}
\end{equation}

By Lemma 3.4 in \cite{arguin2011extremal}, and for $t\geq 3r$,
\begin{equation}
    \Prob_{0,0}^{t-r,-y} \left( \forall s \in [r, t - r] : B_s \leq 0 \right) \leq  \frac{2y \left( \sqrt{r} + \frac{r}{t - r} y \right)}{t - 2r} \leq \frac{6}{t} \, y^2 \left( \sqrt{r} + 1 \right),
\end{equation}

and by Lemma \ref{bmbar},
\begin{equation}
    \Prob_{t-r,y}^{t,z} \left( \exists s \in [t - r, t] : B_s \leq 0 \right) = e^{-2 \,\frac{zy}{r}}.
\end{equation}

Thus, for $t\geq 3r$, equation \eqref{gfdtg} is smaller than

\begin{equation}\label{continuinggdfg}
    \begin{split}
        &  3\sqrt{2} \, e^{t - \frac{m_t^2}{2t}} \frac{1}{\pi t^{\frac{3}{2}} \sqrt{r}} \int_{r^\alpha}^{+\infty} y^2 \left( \sqrt{r} + 1 \right) \int_{x-K}^x e^{\frac{m_t}{t} \, z - \frac{(z - y)^2}{2r}} e^{-2 \,\frac{zy}{r}} \mathop{}\!\mathrm{d}z \mathop{}\!\mathrm{d}y
         \\&\leq C \, \frac{\sqrt{r} + 1}{\sqrt{r}} \int_{r^\alpha}^{+\infty} y^2 \int_{x-K}^x e^{-\frac{(z - y)^2}{2r}} e^{-2 \,\frac{zy}{r}} e^{\frac{m_t}{t} \, z} \mathop{}\!\mathrm{d}z \mathop{}\!\mathrm{d}y
         \\&= C \, \frac{r + 1}{\sqrt{r}} \int_{r^\alpha}^{+\infty} y^2 \int_{x-K}^x e^{-\frac{(z + y)^2}{2r}} e^{\frac{m_t}{t} \, z} \mathop{}\!\mathrm{d}z \mathop{}\!\mathrm{d}y
         \\&\leq C \, \frac{r + 1}{\sqrt{r}} \int_{r^\alpha}^{+\infty} y^2 e^{-\frac{(x + y - K)^2}{2r}} \int_{x-K}^x e^{\frac{m_t}{t} \, z} \mathop{}\!\mathrm{d}z \mathop{}\!\mathrm{d}y
         \\&\leq C \, \frac{r + 1}{\sqrt{r}} e^{\frac{m_t}{t} \, x} \int_{r^\alpha}^{+\infty} y^2 e^{-\frac{(x + y - K)^2}{2r}} \mathop{}\!\mathrm{d}y
         \\&= C \, \frac{r + 1}{\sqrt{r}} e^{\frac{m_t}{t} \, x} \int_{x + r^\alpha - K}^{+\infty} y^2 e^{-\frac{y^2}{2r}} \mathop{}\!\mathrm{d}y.
    \end{split}
\end{equation}

Since $\lambda>\frac{1}{2}$, for $x\geq r^\lambda$, there exists a constant $C$ such that \eqref{continuinggdfg} is smaller than
\begin{equation}
    \begin{aligned}
        & C(r + 1) r^{1/2} \, xe^{\frac{m_t}{t} x} e^{-\frac{\left(x + r^\alpha - K\right)^2}{2r}}  
        && \\
        & \leq C(r + 1) r^{1/2} \, xe^{\frac{m_t}{t} x - \frac{x^2}{2r} + \frac{x}{r^{1 - \alpha}}} 
        && \\
        & \leq xe^{\frac{m_t}{t} x - \frac{x^2}{2t}} \, C(r + 1) r^{1/2} \, e^{ -\frac{x^2}{3r} + \frac{x}{r^{1 - \alpha}}} 
        && \text{\scriptsize ($t \geq 3r$)} \\
        & = o_r(1) \, f(t, x). 
        && \text{\scriptsize ($x \geq r^\lambda$)}
    \end{aligned}
\end{equation}

Thus by Markov's inequality, we have that for any $\delta>0$,
\begin{equation}
    \lim_{r\to \infty}\sup_{t,x: t>3r,  r^{\lambda}\leq x\leq t}\Prob\left( \frac{\#  \nB{\beta_t-\Gamma_{t,\alpha}}{t,I_r,[x-K,x]} \setminus \mathcal{H}^{\beta_t}(t, [t-r,t]) }{f(t,x)} >\delta\right) =0.
\end{equation}
\end{proof}

\subsection{Conditional expectation}\label{subSec:proofprop2}

In this section, we prove Proposition \ref{espcondr}.

\begin{proof}[Proof of Proposition \ref{espcondr}]Let $x=x_t$, $K$, $r$, $R$ be such that, as $t\to \infty$, $x_t=o(t)$ and $x_t \to \infty$,  $K\to\infty$, $r \to \infty$, $R=o(\sqrt{t})$, $R=o(t/x_t)$, and $R\geq r$.
For $u\in \mathcal{N}_R$, we denote ${\mathcal{N}}^u_{t}= \{v\in \mathcal{N}_t : u\preceq v\}$ the set of descendant of $u$ at time $t$. 
Moreover,  define  $A_{v}\coloneqq\{ m_t- X_v(t) \in [x-K,x], \forall s \in [R,t]\,:  X_v(s)\leq  \beta_t(s)\}$ for $v\in \mathcal{N}_{t}$.

Applying the strong Markov property of branching Brownian motion at time $R$ together with the many-to-one lemma, we obtain:
\begin{equation}\label{fd3}
    \begin{split}
        & \E\left[ \NB{\beta_t}{t, [r,t],[x-K,x]}\mid \mathcal{F}_{R} \right] = \E\left[ \sum_{u\in \mathcal{H}^{\beta_t}(t,[r,t])}\bbone_{X_u(t)\geq m_t -x} \mid \mathcal{F}_{R} \right]\\
        &=\E\left[ \sum_{p\in \mathcal{H}^{\beta_t}(R,[r,R])}\sum_{q\in \mathcal{N}^p_{t}} \bbone_{A_q} \mid \mathcal{F}_{R} \right]=\sum_{p\in \mathcal{H}^{\beta_t}(R,[r,R])}\E\left[ \sum_{q\in \mathcal{N}^p_{t}} \bbone_{A_q} \mid \mathcal{F}_{R} \right]\\
        &=\sum_{p\in \mathcal{H}^{\beta_t}(R,[r,R])}e^{t-R} \Prob_{R,X_p(R)}\left( m_t -B_t \in [x-K,x] \cap \forall s\in [R,t] :
          B_s\leq s\frac{m_t}{t} \right).\\
    \end{split}
\end{equation}

For each $p\in \mathcal{N}_{R}$, denote $y_p= R\frac{m_t}{t} - X_p(R)$. We have that,
\begin{equation}\label{fgfrfg}
    \begin{split}
        \Prob_{R,X_p(R)}&\left( m_t -x\leq B_t\leq m_t -x +K  \cap \forall s \in [R,t] :
          B_s\leq s\frac{m_t}{t} \right)\\
        &= \int_{m_t - x}^{m_t-x+K} \Prob_{R,X_p(R)}^{t,z}(\forall s \in [R,t] : B_s \leq s\frac{m_t}{t}) \Prob_{R,X_p(R)}(B_t \in \mathop{}\!\mathrm{d}z) \\
        &= \int_{x-K}^x \Prob_{0,y_p}^{t-R,z}(\forall s\in [0,t-R]:B_s \geq 0) \Prob_{R,X_p(R)}(m_t - B_t \in \mathop{}\!\mathrm{d}z). 
    \end{split}
\end{equation}

On one hand, as $\lvert X_p(R) \rvert \leq \sqrt{2}R$ uniformly in $p$, eventually almost surely and since $R=o(t/x)$, $\frac{y_p x}{t-R}$ goes to $0$ uniformly in $p$. Consequently, for $z\in[0,x]$, 
\begin{equation}
    \Prob_{0,y_p}^{t-R,z}(\in [0,t-R]B_s \geq 0)= 1-e^{-2\frac{zy_p}{t-R}} = 2\frac{zy_p}{t-R} (1+o_t(1)),
\end{equation}
where, as $t\to \infty$, $o_t(1)$ goes to 0 uniformly in $p$ and $z\in[0,x]$, almost surely.

On the other hand,
\begin{equation*}
    \begin{split}
         &\sqrt{2\pi (t-R)}\Prob_{R,X_p(R)}(m_t- B_t \in \mathop{}\!\mathrm{d}z)\\
         &= e^{-\frac{\left(m_t -z -\left(R\frac{m_t}{t}-y_p\right)\right)^2}{2(t-R)}}\mathop{}\!\mathrm{d}z\\
        &= e^{-\frac{m_t^2(t-R)}{2t^2} +\frac{m_t}{t}(z-y_p)-\frac{(z-y_p)^2}{2(t-R)}}\mathop{}\!\mathrm{d}z\\
        &= e^{-(t-R)+\frac{3}{2}\log(t)(1-\frac{R}{t})-\frac{9\log(t)^2}{16t}(1-\frac{R}{t}) +\sqrt{2}(z-y_p) -\frac{3\log(t)}{2\sqrt{2}t}(z-y_p)- \frac{z^2}{2(t-R)} +\frac{zy_p}{t-R}-\frac{y_p^2}{2(t-R)}}\mathop{}\!\mathrm{d}z\\
        &= e^{-(t-R)+\frac{3}{2}\log(t) +\sqrt{2}(z-y_p)- \frac{z^2}{2t} - \frac{3z\log(t)}{2\sqrt{2}t} +o_t(1)}\mathop{}\!\mathrm{d}z,
    \end{split}
\end{equation*}
where, as $t\to \infty$, $o_t(1)$ goes to 0 almost surely, uniformly in $p$ and $z \in [0,x]$ because $R=o(\sqrt{t})$, $R=o(\frac{t}{x})$ and $x=o\left(\frac{t}{\log(t)}\right)$.
Substituting in \eqref{fgfrfg}, we obtain:
\begin{equation}
    \begin{split}
        \Prob_{R,X_p(R)}&\left( m_t -x\leq B_t\leq m_t -x +K  \cap \forall s \in [R,t]: B_s\leq s\frac{m_t}{t} \right)\\
        &=e^{-(t-R)}\left(\frac{t}{t-R}\right)^{\frac{3}{2}} \sqrt{\frac{2}{\pi}} y_pe^{-\sqrt{2}y_p}(1+o_t(1))\int_{x-K}^x ze^{\sqrt{2}z- \frac{z^2}{2t} - \frac{3z\log(t)}{2\sqrt{2}t}}\mathop{}\!\mathrm{d}z\\
        &= e^{-(t-R)} \sqrt{\frac{1}{\pi}} y_pe^{-\sqrt{2}y_p}xe^{\sqrt{2}x- \frac{x^2}{2t} - \frac{3x\log(t)}{2\sqrt{2}t}}(1+o_t(1))\\
        &=e^{-(t-R)}f(t,x)y_pe^{-\sqrt{2}y_p}(1+o_t(1)),
    \end{split}
\end{equation}

for some $o_t(1)$ converging to $0$ uniformly in $p$ almost surely.
The second equality comes from the fact that $R=o(t)$ and that $x=o(t)$.
Plugging this in \eqref{fd3}, for some $o_t(1)$ converging to $0$ almost surely, we conclude that

\begin{equation}
\begin{aligned}
    \E&\left[ \NB{\beta_t}{t, [r,t],[x-K,x]}\mid \mathcal{F}_{R}\right]
    \\&= f(t,x)\,(1+o_t(1)) \sum_{p\in \mathcal{N}_R} 
       \bbone_{ \forall s \in [r,R] : X_p(s)\leq \frac{m_t}{t}s }\, y_p\, e^{-\sqrt{2}y_p} \\
    &= f(t,x)\,(1+o_t(1)) \sum_{p\in \mathcal{N}_R} 
       \bbone_{ \forall s \in [r,R] : X_p(s)\leq \frac{m_t}{t}s  }\, y_p\, e^{-\sqrt{2}(\sqrt{2}R - X_p(R))} 
       &&  \text{\scriptsize ($R \log t / t \to 0$)} \\
    &= f(t,x)\,(1+o_t(1)) \sum_{p\in \mathcal{N}_R} 
       y_p\, e^{-\sqrt{2}(\sqrt{2}R - X_p(R))} 
       && \hspace{-2cm} \text{\scriptsize (eventually a.s.)} \\
    &= f(t,x)\, Z_R\, (1+o_t(1)) 
       && \hspace{-2cm} \text{\scriptsize (eventually a.s.)}
\end{aligned}
\end{equation}

To establish the penultimate equality, recall that \(\limsup_{t \to \infty} (M_t - \sqrt{2}t) = -\infty\) almost surely. Since \(r \leq R = o(\sqrt{t})\) and \(r \to \infty\) as \(t \to \infty\), the indicator function satisfies $\bbone_{\left\{ X_p(s) \leq \frac{m_t}{t} s \ \forall s \in [r, R] \right\}} = 1$, uniformly in \(p\), eventually almost surely. 

For the final equality, we expand $y_p = \sqrt{2}R - X_p(R) - \frac{3R\log t}{2\sqrt{2}t}$. Because $R = o(t/\log t)$, the exponential factor $e^{\frac{3R\log t}{2t}} = 1 + o_t(1)$. We also recall that the additive martingale $W_t\coloneqq\sum_{u\in \mathcal{N}_t}  e^{-\sqrt{2}\left(\sqrt{2}t - X_u(t)\right)}$ converges to zero almost surely as $t\to \infty$. Therefore, the residual term vanishes:
\begin{equation}
    \frac{3R\log t}{2\sqrt{2}t} \sum_{p\in \mathcal{N}_R}  e^{-\sqrt{2}\left(\sqrt{2}R - X_p(R)\right)} = \frac{3R\log t}{2\sqrt{2}t} W_R \xrightarrow[t\rightarrow \infty]{} 0 \quad \text{  a.s}.
\end{equation}

\end{proof}
\subsection{Second moment concentration}\label{subSec:proofprop3}

We first prove Lemma \ref{lemmaL2bound} 

\begin{proof}[Proof of Lemma \ref{lemmaL2bound}]
Take $ 0\leq R\leq t, X \subset \mathbb{R},I \subset [0,t]$ and a function $\beta \colon I \longrightarrow \mathbb{R}$.


For $u\in \mathcal{N}_R$, we denote ${\mathcal{N}}^u_{t}= \{v\in \mathcal{N}_t : u\preceq v\}$ the set of descendant of $u$ at time $t$. Further define  $A_{v}\coloneqq\{ m_t- X_v(t) \in X, \forall s \in [R,t]\cap I \,:  X_v(s)\leq  \beta(s)\}$ for $v\in \mathcal{N}_{t}$.



We have that
\begin{equation}
    \begin{split}
         \E&\left[ \left(N^{\beta}(t, I,X)- \E\left[N^{\beta}(t, I,X)\mid \mathcal{F}_R \right] \right)^2 \right]\\&
         =\E\left[N^{\beta}(t, I,X)^2 \right] + \E\left[ \E\left[N^{\beta}(t, I,X) \mid\mathcal{F}_R \right] ^2 \right]\\& \qquad
         -2 \E\left[ N^{\beta}(t, I,X) \E\left[N^{\beta}(t, I,X) \mid\mathcal{F}_R \right] \right] \\&
         = \E\left[ \E\left[N^{\beta}(t, I,X)^2 \mid \mathcal{F}_R\right] -  \E\left[N^{\beta}(t, I,X) \mid\mathcal{F}_R \right] ^2 \right].
    \end{split}
\end{equation}



Moreover,
\begin{equation}
    \begin{split}\label{JOIR}
         \E&\left[N^{\beta}(t, I,X)^2 \mid \mathcal{F}_R\right] -  \E\left[N^{\beta}(t, I,X) \mid\mathcal{F}_R \right] ^2
         \\& = \sum_{u,u' \in \mathcal{H}^\beta(R,I \cap [0,R])} \E\left[ \sum_{v\in \mathcal{N}^u_{t}}\bbone_{A_{v}}  \sum_{v'\in \mathcal{N}^{u'}_{t}}\bbone_{A_{v'}} \middle| \mathcal{F}_R \right] 
         \\& \qquad \qquad \qquad- \E\left[ \sum_{v\in \mathcal{N}^u_{t}}\bbone_{A_{v}} \middle| \mathcal{F}_R \right]\E\left[ \sum_{v'\in \mathcal{N}^{u'}_{t}}\bbone_{A_{v'}} \middle| \mathcal{F}_R \right].
    \end{split}
\end{equation}

In this latter expression, for $u \neq u'$, since the particles branched before time $R$, their path are conditionally independent on $\mathcal{F}_R$, the terms in the sum cancel each other. Thus \eqref{JOIR} is equal to
\begin{equation}
    \begin{split}
    \E&\left[ \sum_{u \in \mathcal{H}^\beta(R,I \cap [0,R])} \E\left[ \left( \sum_{v\in \mathcal{N}^u_{t}}\bbone_{A_{v}} \right)^2 \middle| \mathcal{F}_R \right] -\E\left[ \sum_{v\in \mathcal{N}^u_{t}}\bbone_{A_{v}} \middle| \mathcal{F}_R \right]^2 \right]
         \\& \leq \E\left[ \sum_{u \in \mathcal{H}^\beta(R,I \cap [0,R])} \sum_{v,v'\in \mathcal{N}^u_{t}}\bbone_{A_{v}}\bbone_{A_{v'}}   \right].
    \end{split}
\end{equation}
\end{proof}



We now proceed to prove Proposition~\ref{L2boundlocancestor}.
\begin{proof}[Proof of Proposition \ref{L2boundlocancestor}]
Let $x=x_t, K, r, R=R_t$ be such that, as $t\to \infty$, \( x_t \to \infty \) with \( x_t = o(t) \), \( R_t \to \infty \), $2r\leq R_t$, $3r \leq t$ and $K\in \mathbb{R}$. Proposition \ref{L2boundlocancestor} states that 
\begin{equation}
    \frac{\E\left[\#\left\{(u,v) \in \mathcal{N}^{\beta_t}(t,[r,t],[x-K,x])^2 : Q_t(u,v) >R_t \right\}\right]}{f(t,x)^2}\xrightarrow[t\rightarrow \infty]{}0,
\end{equation}
where $\beta_t(s)=s\frac{m_t}{t}$.

In the following proof, $C$ denotes a generic constant whose value may change from line to line. By the many-to-two lemma \ref{manyto2}, we have that
\begin{equation}\label{fdfo}
    \begin{aligned}
        \E&\left[\#\left\{(u,v) \in \mathcal{N}^\beta(t,[r,t],[x-K,x])^2, u \neq v : Q_t(u,v) > R \right\}\right] \\
        &= \int_R^t 2e^{2t-\gamma} \Prob\left( 
            \begin{aligned}
                &B^{1,\gamma}_t \in [m_t-x, m_t-x+K], \ \forall s \in [r,t], \, B^{1,\gamma}_s \leq \frac{m_t}{t}s, \\
                &B^{2,\gamma}_t \in [m_t-x, m_t-x+K], \ \forall s \in [r,t], \, B^{2,\gamma}_s \leq \frac{m_t}{t}s
            \end{aligned}
        \right) \mathop{}\!\mathrm{d}\gamma \\
        &= \int_R^t 2e^{2t-\gamma} \int_{-\infty}^{\frac{m_t}{t}\gamma} \phi_\gamma(y) \Prob\left(\forall s \in [r, \gamma], \, B_s \leq \frac{m_t}{t}s \mid B_\gamma = y \right) \\
        &\qquad \times \Prob\left( m_t - B_{t-\gamma} \in [x-K,x], \, \forall s \in [\gamma,t], \, B_{s-\gamma} \leq \frac{m_t}{t}s \mid B_0 = y \right)^2 \mathop{}\!\mathrm{d}y \mathop{}\!\mathrm{d}\gamma \\
        &= \int_R^t 2e^{2t-\gamma} \int_{0}^{+\infty} \phi_\gamma\left(\frac{m_t}{t}\gamma - y\right) \Prob\left(\forall s \in [r, \gamma], \, B_s \geq 0 \mid B_\gamma = y \right) \\
        &\qquad \times \Prob_{-y}\left( \frac{m_t}{t}(t-\gamma) - B_{t-\gamma} \in [x-K,x], \, \forall s \in [0,t-\gamma], \, B_s \leq \frac{m_t}{t}s \right)^2 \mathop{}\!\mathrm{d}y \mathop{}\!\mathrm{d}\gamma,
    \end{aligned}
\end{equation}

where for $\gamma \in \left[0,t\right]$, $(B^{(1,\gamma)}_s)_{s\in \mathbb{R}}$ and $(B^{(2,\gamma)}_s)_{s\in \mathbb{R}}$ are coupled Brownian motions sharing a common path until time $\gamma$, then evolving independently thereafter.

We bound the first two terms in the last integral from equation \eqref{fdfo}. For the first term, we have that,
\begin{equation}
    \phi_\gamma\left(\frac{m_t}{t}\gamma-y\right)= \frac{1}{\sqrt{2 \pi \gamma}}e^{-\frac{m_t^2}{2t^2}\gamma}e^{\frac{m_t}{t}y}e^{- \frac{y^2}{2\gamma}} \leq \frac{1}{\sqrt{2 \pi \gamma}}e^{-\frac{m_t^2}{2t^2}\gamma}e^{\frac{m_t}{t}y}.
\end{equation}

For the second term, applying Lemma \ref{reflectionprinciple0}, we derive
\begin{equation} \label{eq:84}
    \Prob\left(\forall s \in [r, \gamma] \, , \, B_s \geq 0 \mid B_\gamma = y \right)= 1- 2\Prob(B_r \leq 0 \mid B_\gamma = y ).
\end{equation}
Denote $\mu = \frac{y r}{\gamma}$ and $\sigma^2= \frac{r(\gamma - r)}{\gamma}$. Let $G$ be a standard Gaussian random variable, then the the quantity \eqref{eq:84} is equal to
\begin{equation}
    \Prob\left( |G| \leq \frac{\mu}{\sigma}\right) \leq \sqrt{\frac{2}{ \pi}} \frac{\mu}{\sigma}=\sqrt{\frac{2}{ \pi}} \frac{y \sqrt{r}}{\sqrt{\gamma(\gamma - r)}}.
\end{equation}

We partition the domain of integration in \eqref{fdfo} into several subdomains.
We start with several easy cases: First, \eqref{fdfo} on $\gamma \geq t-1$ and $y \leq x+1$ is smaller than
\begin{equation}\label{fdfoy2}
    \begin{split}
        &\int_{t-1}^t 2e^{2t-\gamma}\int_{0}^{x+1}\phi_\gamma\left(\frac{m_t}{t}\gamma-y\right)\Prob\left(\forall s \in [r, \gamma] \, , \, B_s \geq 0 \mid B_\gamma = y \right)\mathrm{d}y\mathrm{d}\gamma
        \\& \leq \int_{t-1}^t 2e^{2t-\gamma}\int_{0}^{x+1}\frac{1}{\pi \gamma}e^{-\frac{m_t^2}{2t^2}\gamma}e^{\frac{m_t}{t}y}\frac{y \sqrt{r}}{\sqrt{\gamma - r}}\mathrm{d}y\mathrm{d}\gamma.
    \end{split}
\end{equation}
Using that $\frac{m_t^2}{2t^2}\gamma\geq  \gamma -\frac{3\gamma \log(t)}{2t}$ and $R\geq 2r$, \eqref{fdfoy2} is bounded by
\begin{equation}\label{fdfoyy}
    \begin{split}
        &C\sqrt{r}\int_{t-1}^t \frac{e^{\frac{3\gamma \log(t)}{2t}}}{\gamma^{\frac{3}{2}}}\int_{0}^{x+1}ye^{\frac{m_t}{t}y}\mathrm{d}y\mathrm{d}\gamma\leq C\sqrt{r}\int_{0}^{x+1}ye^{\frac{m_t}{t}y}\mathrm{d}y
        \\& \leq  C\sqrt{r}xe^{\frac{m_t}{t}x}.
    \end{split}
\end{equation}
Secondly, the RHS of \eqref{fdfo} restricted to $\gamma \geq t-1$ and $y\geq 1+x$ is smaller than
\begin{equation}\label{fdfoy3}
    \begin{split}
        &\int_{t-1}^t 2e^{2t-\gamma}\int_{x+1}^{\infty}\phi_\gamma\left(\frac{m_t}{t}\gamma-y\right)\Prob\left(\forall s \in [r, \gamma] \, , \, B_s \geq 0 \mid B_\gamma = y \right)\\
        & \qquad \qquad \Prob_{-y}\left( B_{t- \gamma} \geq \frac{m_t}{t}(t- \gamma)-x\right) ^2  \mathrm{d}y\mathrm{d}\gamma
        \\& \leq C \sqrt{r} \int_{t-1}^t e^{2t-\gamma}\int_{x+1}^{\infty}\frac{1}{ \gamma^{\frac{3}{2}}}e^{-\frac{m_t^2}{2t^2}\gamma}e^{\frac{m_t}{t}y}y\left(e^{-\frac{m_t^2(t-\gamma)}{2t^2}}e^{\frac{m_t}{t}(x-y)}e^{-\frac{(x-y)^2}{2(t-\gamma)}}\right)^2\mathrm{d}y\mathrm{d}\gamma
        \\& \leq C \sqrt{r}e^{2\frac{m_t}{t}x} \int_{t-1}^t \int_{x+1}^{\infty}e^{-\frac{m_t}{t}y}y\mathrm{d}y\mathrm{d}\gamma\leq C \sqrt{r}xe^{\frac{m_t}{t}x}.
    \end{split}
\end{equation}
The bounds obtained in equations \eqref{fdfoyy} and \eqref{fdfoy3}, when renormalised by $f(t,x)^2$, both tends to $0$ as $x,t\to \infty$ with  $x=o(t)$. Therefore when $\gamma \geq t-1$, \eqref{fdfo} yields a negligible contribution.

We furthermore bound \eqref{fdfo} when $\frac{m_t}{t}(t-\gamma) +y -x \leq 0$ by
\begin{equation}\label{fdfo2}
    \begin{split}
        &\int_R^t 2e^{2t-\gamma}\int_{0}^{+\infty}\phi_\gamma\left(\frac{m_t}{t}\gamma-y\right)\Prob\left(\forall s \in [r, \gamma] \, , \, B_s \geq 0 \mid B_\gamma = y \right)
        \\&\Prob_{-y}\left( B_{t- \gamma} \geq \frac{m_t}{t}(t- \gamma)-x , \, \forall s \in [0,t-\gamma] \, , \, B_{s} \leq \frac{m_t}{t}s \right) ^2  \bbone_{\frac{m_t}{t}(t-\gamma) +y -x \leq 0}\mathrm{d}y\mathrm{d}\gamma
        \\& \leq \int_R^t 2e^{2t-\gamma}\int_{0}^{+\infty}\frac{1}{\pi \gamma}e^{-\frac{m_t^2}{2t^2}\gamma}e^{\frac{m_t}{t}y}\frac{y \sqrt{r}}{\sqrt{\gamma - r}}\bbone_{\frac{m_t}{t}(t-\gamma) +y -x \leq 0}\mathrm{d}y\mathrm{d}\gamma.
    \end{split}
\end{equation}
Using again that $R\geq 2r$, $\frac{m_t^2}{2t^2}\gamma\geq  \gamma -\frac{3\gamma \log(t)}{2t}$ and that on $\{\frac{m_t}{t}(t-\gamma) +y -x \leq 0\}$, $y\leq x$, $\gamma \geq t-x$, \eqref{fdfo2} is bounded by
\begin{equation}\label{fdfo3}
    \begin{split}
    &\leq C \sqrt{r} \int_{t-x}^t e^{2(t-\gamma) +\frac{3\gamma \log(t)}{2t}}\int_{0}^{x}\frac{1}{ \gamma^{\frac{3}{2}}}ye^{\frac{m_t}{t}y}\bbone_{\frac{m_t}{t}(t-\gamma) +y -x \leq 0}\mathrm{d}y\mathrm{d}\gamma\\
    &\leq C \sqrt{r} \int_{t-x}^t e^{\left(2- \frac{m_t^2}{t^2}\right)(t-\gamma)}\int_{0}^{x}ye^{\frac{m_t}{t}\left(y+ \frac{m_t}{t}(t-\gamma)\right)}\bbone_{\frac{m_t}{t}(t-\gamma) +y -x \leq 0}\mathrm{d}y\mathrm{d}\gamma\\
     &\leq C \sqrt{r} e^{\frac{m_t}{t}x}\int_{t-x}^t e^{\frac{3x\log(t)}{t}}\int_{0}^{x}y\mathrm{d}y\mathrm{d}\gamma\leq C \sqrt{r} e^{\frac{m_t}{t}x}x^3e^{\frac{3x\log(t)}{t}}.
    \end{split}
\end{equation}
When renormalized by $f(t,x)^2$, the ratio is bounded by 
\begin{equation*}
    C\sqrt{r} x \exp\left( -\frac{m_t}{t}x + \frac{x^2}{t} + \frac{3x\log(t)}{t} \right) = C\sqrt{r} x \exp\left( x \left( -\frac{m_t}{t} + \frac{x}{t} + \frac{3\log t}{t} \right) \right).
\end{equation*}
Since $x = o(t)$, the terms $\frac{x}{t}$ and $\frac{3\log t}{t}$ both converge to $0$. Thus, the exponent is dominated by the strictly negative linear term $-\frac{m_t}{t}x \sim -\sqrt{2}x$. Therefore, when $\frac{m_t}{t}(t-\gamma) +y -x \leq 0$, \eqref{fdfo} yields a negligible contribution.

Thus, it remains to bound \eqref{fdfo} when $\gamma \leq t-1$ and $\frac{m_t}{t}(t-\gamma) +y -x > 0$.
We first bound the third term appearing in the last integral in equation \eqref{fdfo}. By monotonicity and Lemma \ref{bmbar}, we have that
\begin{equation}
    \begin{split}
         &\Prob_{-y}\left(  \frac{m_t}{t}(t- \gamma) - B_{t- \gamma} \in [x-K,x] , \, \forall s \in [0,t-\gamma] \, , \, B_{s} \leq \frac{m_t}{t}s \right)
         \\& \qquad \leq \Prob_{-y}\left(  \frac{m_t}{t}(t- \gamma) - B_{t- \gamma} \in [x-K,x] \right) 
         \\&  \qquad \qquad \Prob_{0,-y}^{t-\gamma,\frac{m_t}{t}(t- \gamma)-x }\left(\forall s \in [0,t-\gamma] \, : B_{s} \leq \frac{m_t}{t}s \right)
         \\&  \qquad =\Prob_{-y}\left(  \frac{m_t}{t}(t- \gamma) - B_{t- \gamma} \in [x-K,x] \right) \Prob_{0,-y}^{t-\gamma,-x }\left(\forall s \in [0,t-\gamma] \, : B_{s} \leq 0\right)
        \\&  \qquad = \Prob_{-y}\left(  \frac{m_t}{t}(t- \gamma) - B_{t- \gamma} \in [x-K,x]\right) \left(1- e^{-2\frac{xy}{t- \gamma}}\right)
        \\& \qquad \leq \Prob_{-y}\left( \frac{m_t}{t}(t- \gamma) - B_{t- \gamma} \in [x-K,x] \right)2\frac{xy}{t- \gamma}.
    \end{split}
\end{equation}

Moreover, on the domain $\mathcal{A} \coloneqq \{(y,\gamma) : \frac{m_t}{t}(t-\gamma) + y > x\}$, we have:
\begin{equation}
    \begin{aligned}
        \Prob_{-y}&\left(\tfrac{m_t}{t}(t-\gamma) - B_{t-\gamma} \in [x-K,x]\right) \\
        &= \frac{1}{\sqrt{2\pi(t-\gamma)}} \int_{\frac{m_t}{t}(t-\gamma) + y - x}^{\frac{m_t}{t}(t-\gamma) + y - x + K} 
            e^{-\frac{z^2}{2(t-\gamma)}} \mathrm{d}z \\
        &\leq \frac{K}{\sqrt{2\pi(t-\gamma)}} \exp\left(-\frac{\left(\tfrac{m_t}{t}(t-\gamma) + y - x\right)^2}{2(t-\gamma)}\right) \\
        &= \frac{K}{\sqrt{2\pi(t-\gamma)}} \exp\left(
            -\tfrac{m_t^2(t-\gamma)}{2t^2} + \tfrac{m_t}{t}(x-y) - \tfrac{(x-y)^2}{2(t-\gamma)}
        \right).
    \end{aligned}
\end{equation}

We now insert all these estimates back in \eqref{fdfo} and bound \eqref{fdfo} on the domain $\mathcal{A} $ and when $\gamma \leq t-1$ by

\begin{equation}\label{fge3}
\begin{split}
    CK^2\int_R^{t-1} &e^{2t-\gamma}\int_{0}^{+\infty}\bbone_{\mathcal{A}}\frac{1}{\sqrt{\gamma}}e^{-\frac{m_t^2}{2t^2}\gamma}e^{\frac{m_t}{t}y}\frac{y \sqrt{r}}{\sqrt{\gamma(\gamma - r)}}
        \\& \qquad \qquad \left( \frac{xy}{t- \gamma}\right)^2 \left(\sqrt{\frac{1}{t- \gamma}}e^{-\frac{m_t^2(t-\gamma)}{2t^2}}e^{\frac{m_t}{t}(x-y)}e^{-\frac{(x-y)^2}{2(t-\gamma)}}\right)^2  \mathrm{d}y\mathrm{d}\gamma
         \\& = CK^2 x^2e^{2\frac{m_t}{t}x}\sqrt{r}\int_R^{t-1}\frac{e^{2t-\gamma -\frac{m_t^2}{t}+\gamma\frac{m_t^2}{2t^2}}}{\gamma\sqrt{\gamma-r}(t-\gamma)^{3}}
         \\& \qquad \qquad\int_{0}^{+\infty}\bbone_{\mathcal{A}}y^3e^{-\frac{m_t}{t}y}e^{-\frac{(x-y)^2}{t-\gamma}}\mathrm{d}y\mathrm{d}\gamma.
\end{split}
\end{equation}

We further partition the analysis into two subcases. First, if $\gamma \leq t-x/10$, the RHS of \eqref{fge3} is bounded by: 
\begin{equation}\label{fge4}
\begin{split}
        & CK^2 x^2e^{2\frac{m_t}{t}x}e^{-\frac{x^2}{t}}\sqrt{r}\int_R^{t-\frac{x}{10}} \frac{e^{3\log(t)-\frac{3}{2}\frac{\gamma}{t}\log(t)}}{\gamma\sqrt{\gamma-r}(t-\gamma)^{3}}\int_{0}^{+\infty}y^3e^{-\left(\frac{m_t}{t}-2\frac{x}{t}\right)y}\mathrm{d}y\mathrm{d}\gamma.
\end{split}
\end{equation}

Since \(\left(\tfrac{m_t}{t} - 2\tfrac{x}{t}\right) \geq 1\) for \(x \leq t/10\) and \(t \geq 1\), there exists \(B > 0\) such that:
\[
\int_0^{+\infty} y^3 e^{-\left(\tfrac{m_t}{t} - 2\tfrac{x}{t}\right)y} \mathrm{d}y \leq B.
\]

Moreover, under the condition \(R \geq 2r\) (implying \(\gamma \geq 2r\)), we have \(\gamma \leq 2(\gamma - r)\). This yields:
\[
\int\limits_R^{t-\frac{x}{10}} \frac{e^{3\log t - \tfrac{3\gamma}{2t}\log t}}{\gamma\sqrt{\gamma-r}(t-\gamma)^3} \mathrm{d}\gamma 
\leq C \int\limits_R^{t-\frac{x}{10}} \frac{e^{3\log t - \tfrac{3\gamma}{2t}\log t}}{\gamma^{3/2}(t-\gamma)^3} \mathrm{d}\gamma.
\]

There exists \( x_0 > 0 \) such that for all \( t \geq x_0 \) and \( \gamma \in [R, t - x_0/10] \),
\begin{equation*}
    \log t \leq \tfrac{\gamma}{t}\log t + \log(t - \gamma).
\end{equation*}
Therefore, for $x$ large enough,
\begin{equation}
    \begin{split}
         \int_R^{t-\frac{x}{10}} \frac{e^{3\log(t)-\frac{3}{2}\frac{\gamma}{t}\log(t)}}{\gamma^{\frac{3}{2}}(t-\gamma)^{3}}\mathrm{d}\gamma\leq \int_R^{t-\frac{x}{10}} \frac{e^{\frac{3}{2}\log(t)}}{\gamma^{\frac{3}{2}}(t-\gamma)^{\frac{3}{2}}}\mathrm{d}\gamma,
    \end{split}
\end{equation}
which tends to 0 with $R\to \infty$ and $x\to \infty$.
Thus \eqref{fge4}, renormalised by $f(t,x)^2$, tends to $0$.

 Secondly, for $(y, \gamma) \in \mathcal{A}$, if $\gamma > t - x/10$ then $y \geq 4x/5$. Thus, for $x$ large enough, the RHS of \eqref{fge3} is smaller than
\begin{equation}
\begin{split}
        & CK^2 x^2e^{2\frac{m_t}{t}x}\sqrt{r}\int_{t-\frac{x}{10}}^{t-1} e^{\frac{3}{2}(1-\frac{\gamma}{t})\log(t)}\int_{\frac{4x}{5}}^{+\infty}y^3e^{-\frac{m_t}{t}y}\mathrm{d}y\mathrm{d}\gamma\\
        & \leq C K^2 x^2e^{\frac{7}{5}\frac{m_t}{t}x}\sqrt{r}\int_{t-\frac{x}{10}}^{t-1} e^{\frac{3}{2}(1-\frac{\gamma}{t})\log(t)}\mathrm{d}\gamma\\
        &\leq Cx^3e^{\frac{7}{5}\frac{m_t}{t}x}\sqrt{r}e^{\frac{3}{20}\frac{x\log(t)}{t}}.
\end{split}
\end{equation}

When this bound is renormalised by $f(t,x)^2$, the ratio is bounded by
\begin{equation*}
    C \sqrt{r} x \exp\left( -\frac{3}{5}\frac{m_t}{t}x + \frac{x^2}{t} + \frac{3x\log(t)}{20 t} \right) = C \sqrt{r} x \exp\left( x\left( -\frac{3}{5}\frac{m_t}{t} + \frac{x}{t} + \frac{3\log t}{20 t} \right) \right).
\end{equation*}
Since $x = o(t)$, the terms $\frac{x}{t}$ and $\frac{3\log t}{20 t}$ both converge to $0$. Thus, the exponent is dominated by the strictly negative linear term $-\frac{3}{5}\frac{m_t}{t}x \sim -\frac{3\sqrt{2}}{5}x$. Therefore, the renormalised bound tends to $0$, and when $\frac{m_t}{t}(t-\gamma) +y -x > 0$, \eqref{fdfo} yields a negligible contribution.
\end{proof}

\bibliographystyle{plain}
\bibliography{biblio}
\end{document}